\newcommand{\probto}{\stackrel{{\rm P}}{\longrightarrow}}
\def\mycenter#1{\par#1\par}
\newcommand{\probto}{\mathrel{\stackrel{{\rm P}}{\longrightarrow}}}
\def\mycenter#1{\par\centerline{#1}\noindent}
\newcommand{\lleft}{\left}
\newcommand{\rright}{\right}
\newcommand{\rrVert}{\Vert}
\newcommand{\llVert}{\Vert}
\def\Me{\operatornamewithlimits{\mathbb{E}}}
\def\Prob{\operatornamewithlimits{\mathbb{P}}}
\def\var{\operatornamewithlimits{\mathrm{var}}}
\def\rank{\operatornamewithlimits{\mathrm{rk}}}
\def\trace{\operatornamewithlimits{\mathrm{tr}}}
\def\defect{\operatornamewithlimits{\mathrm{def}}}
\def\dist{\operatornamewithlimits{\mathrm{dist}}}
\def\colspan{\operatornamewithlimits{\mathrm{span}}}
\def\diag{\operatornamewithlimits{\mathrm{diag}}}
\def\Kernel{\operatornamewithlimits{\mathrm{Ker}}}
\def\pinvp(#1){(#1)^{\dagger}}
\def\pinvb(#1){{#1}^{\dagger}}
\def\pinvbb(#1){#1^{\dagger}}
\newcommand{\uin}[1]{\|#1\|_{\rm U}}
\newcommand{\xtrue}{{X_{0}}}
\newcommand{\xxtrue}{X_{\rm ext}^0}
\newcommand{\xxtruetra}{X_{\rm ext}^{0\,\top}}
\newcommand{\hxx}{\widehat X_{\rm ext}}
\newcommand{\hxxsv}{\widehat X^{\rm mod}_{\rm ext}}
\newcommand{\hxxno}[1]{\widehat X_{#1}^{\rm ext}}
\newcommand{\hxxnoT}[1]{\widehat X_{#1}^{{\rm ext}\,\top}}
\newcommand{\smallxxmat}{{\left(\begin{smallmatrix} X_0 \\ -I \end{smallmatrix}\right)}}
\newcommand{\setA}{\mathit{F}}
\def\colspana<#1>{\colspan\langle #1\rangle}
\newcommand{\Deltapm}{\Delta_{\rm pm}}
\newcommand{\Deltaem}{\Delta_{\rm em}}
\newcommand{\DeltaFro}{\Delta_{\min\eqref{eqTLS118}}}
\newcommand{\DeltaUin}{\Delta_{\min\eqref{eqTLS118uin}}}
\newcommand{\DeltaFeas}{\Delta_{\rm feas}}
\newcommand{\XSXTMT}{(T^{-1})^\top \textrm{M} T^{-1}}
\newcommand{\pinvXSXTMT}{T \pinvb(\textrm{M}) T^{\top}}
\theoremstyle{plain}
\newtheorem{thm}{Theorem}[section]
\newtheorem*{cora}{Corollary}
\newtheorem{lem}[thm]{Lemma}
\newtheorem{prop}[thm]{Proposition}
\theoremstyle{definition} 
\newtheorem{exmp}{Example}[section]
\theoremstyle{remark}
\newtheorem{rem}{Remark}[section]
\newtheorem{remforp}{Remark}[thm]
\newtheorem*{notation}{Notation}
\begin{document}

\begin{frontmatter}
\pretitle{Research Article}

\title{Consistency of the total least squares estimator in the linear errors-in-variables regression}


\author{\inits{S.V.}\fnms{Sergiy}~\snm{Shklyar}\ead[label=e1]{shklyar@univ.kiev.ua}}
\address{\institution{Taras Shevchenko National University of Kyiv}}



\markboth{S.V. Shklyar}{Consistency of the total least squares estimator in linear errors-in-variables regression}

\begin{abstract}
This paper deals with a
homoskedastic errors-in-variables linear regression model
and properties of the total least squares (TLS) estimator.
We partly revise the consistency results for
the TLS estimator previously obtained by the author~\cite{Shklyar2011}.
We present complete and comprehensive proofs of consistency theorems.
A theoretical foundation for construction of the TLS estimator and its relation to
the generalized eigenvalue problem is explained.
Particularly, the uniqueness of the estimate is proved.
The Frobenius norm in the definition of the estimator
can be substituted by the spectral norm,
or by any other unitarily invariant norm;
then the consistency results are still valid.
\end{abstract}
\begin{keywords}
\kwd{Errors in variables}
\kwd{functional model}
\kwd{linear regression}
\kwd{measurement error model}
\kwd{multivariate regression}
\kwd{total least squares}
\kwd{strong consistency}
\end{keywords}
\begin{keywords}[MSC2010]%
\kwd{62J05}
\kwd{62H12}
\end{keywords}

\received{\sday{31} \smonth{10} \syear{2017}}
\revised{\sday{7} \smonth{5} \syear{2018}}
\accepted{\sday{8} \smonth{5} \syear{2018}}
\publishedonline{\sday{30} \smonth{5} \syear{2018}}
\end{frontmatter}

\section{Introduction}\label{sec:intro}
We consider a functional linear error-in-variables model.
Let
$\{a^0_i,\; i\ge 1\}$
be a sequence of unobserved nonrandom $n$-dimensional vectors.
The elements of the vectors are true explanatory variables or
(in other terminology) true regressors.
We observe $m$ $n$-dimensional random vectors
$a_1, \ldots, a_m$ and
$m$ $d$-dimensional random vectors $b_1, \ldots, b_m$.
They are thought to be true vectors $a^0_i$
and $X_0^\top a^0_i$, respectively, plus additive errors:
\begin{equation}
\label{eq:linregww} \begin{cases}
b_i = X_0^\top a^0_i + \tilde b_i, \\
a_i = a^0_i + \tilde a_i,
\end{cases}
\end{equation}
where
$\tilde a_i$ and $\tilde b_i$ are random measurement errors
in the regressor and in the response.
A nonrandom matrix $X_0$ is estimated based on observations
$a_i$, $b_i$, $i=1,\ldots,m$.

This problem is related to finding an approximate solution to incompatible linear equations (``overdetermined'' linear equation, because the number of equations exceeds the number of variables)
\[
A X \approx B,
\]
where $A = [a_1, \ldots, a_m]^\top$ is an $m \times n$ matrix and
$B = [b_1, \ldots, b_m]^\top$ is an $m \times d$ matrix.
Here $X$ is an unknown $n \times d$ matrix.

In the linear error-in-variables regression model \eqref{eq:linregww},
the \emph{Total Least Squares\/} (TLS) estimator in widely used.
It is a multivariate
equivalent to
the orthogonal regression estimator.
We are looking for conditions that provide consistency or
strong consistency of the estimator.
It is assumed (for granted) that the measurement errors
$\tilde c_i =  ( \begin{smallmatrix} \tilde a_i\\ \tilde b_i\end{smallmatrix} )$,
$i=1,2,\ldots$, are independent
and have the same covariance matrix $\varSigma$.
It may be singular.
In particular, some of regressors may be observed without errors.
(If the matrix $\varSigma$ is nonsingular, the proofs can be simplified.)
An intercept can be introduced into \eqref{eq:linregww}
by augmenting the model and inserting a constant error-free regressor.

Sufficient conditions for consistency of the estimator are presented in
Gleser \cite{Gleser1981},
Gallo \cite{Gallo82},
Kukush and Van Huffel \cite{KVH02}.
In \cite{Shklyar2011}, the consistency results are obtained under
less restrictive conditions than in \cite{KVH02}.
In particular, there is no requirement that
\[
\frac{\lambda^2_{\min} (A_0^\top A_0)}{\lambda_{\max} (A_0^\top A_0)} \to \infty \quad \mbox{as} \quad m\to\infty,
\]
where $A_0 = [a^0_1, \ldots, a^0_m]^\top$ is the matrix $A$
without measurement errors. Hereafter,  $\lambda_{\min}$
and $\lambda_{\max}$ denotes the minimum and maximum eigenvalues
of a matrix if all the eigenvalues are real numbers.  The matrix
$A_0^\top A_0^{}$ is symmetric (and positive semidefinite).  Hence,
its eigenvalues are real (and nonnegative).

The model where some variables are explanatory
and the other are response is called \textit{explicit}.
The alternative is the \textit{implicit} model,
where all the variables are treated equally.
In the \textit{implicit} model,
the $n$-dimensional linear subspace
in $\mathbb{R}^{n+d}$ is fitted to an observed set of points.
Some $n$-dimensional subspaces can be represented in a form
$\{(a, b) \in \mathbb{R}^{n+d} : b = X^\top a\}$
for some $n\times d$ matrix $X$;
such subspaces are called \textit{generic}.
The other subspaces are called \textit{non-generic}.
The true points lie on a generic subspace
$\{(a,b) : b = X_0^\top a\}$.
A consistently estimated subspace must be generic with high probability.
We state our results for the {explicit} model,
but use the ideas of the {implicit} model
in the definition of the estimator, as well as in proofs.

We allow errors in different variables to correlate.
Our problem is a minor generalization of
the mixed LS-TLS problem, which is studied
in \cite[Section~3.5]{VanHuffelVandewalle1991}.
In the latter problem, some explanatory variables are
observed without errors; the other explanatory variables
and all the response variables are observed with errors.
The errors have the same variance and are uncorrelated.
The basic LS model (where the explanatory variables are error-free,
and the response variables are error-ridden)
and the basic TLS model (where all the variables are observed
with error, and the errors are uncorrelated)
are marginal cases of the mixed LS-TLS problem.
By a linear transformation of variables our model
can be transformed into either a mixed LS-TLS or basic LS or basic TLS problem.
(We do not handle the case where there are more error-free variables than explanatory variables.)
Such a transformation does not always preserve the sets of
generic and non-generic subspaces.
The mixed LS-TLS problem can be transformed into the basic TLS problem
as it is shown in \cite{GOLUB1987317}.

The Weighted TLS and Structured TLS estimators are generalizations of the TLS estimator for the cases where the error covariance matrices
do not coincide
for different observations or where the errors for different observations are dependent;
more precisely, the independence condition is replaced with the condition on the ``structure of the errors''.
The consistency of these estimators is proved in
Kukush and Van Huffel \cite{KVH02} and
Kukush et al. \cite{KMF2005}.
Relaxing conditions for consistency of the Weighted TLS and Structured TLS estimators is an interesting topic for a future research.
For generalizations of the TLS problem, see the monograph
\cite{Markovsky2006} and the review \cite{WICS:WICS65}.

In the present paper, for a multivariate regression model
with multiple response variables
we consider two versions of the TLS estimator.
In these estimators, different norms
of the weighted residual matrix are minimized.
(These estimators coincide for the univariate regression model.)
The common way to construct the estimator is to minimize the Frobenius norm.
The estimator that minimizes the Frobenius norm also minimizes the spectral norm.
Any estimator that minimizes the spectral norm is consistent under conditions
of our consistency theorems (see Theorems~\ref{thm-3.1}--\ref{thm-3.3} in Section~\ref{sec:ourConsThs}).
We also provide a sufficient condition for uniqueness of the estimator
that minimizes the Frobenius norm.

In this paper, for the results on consistency of the TLS estimator which are stated in paper \cite{Shklyar2011},
we provide complete and comprehensive proofs and present all necessary auxiliary and complementary results.
For convenience of the reader we first present the sketch of proof.
Detailed proofs are postponed to the appendix.
Moreover, the paper contains new results on the relation between the TLS estimator and the generalized eigenvalue problem.

The structure of the paper is as follows.
In Section~\ref{sec:modelest} we introduce the model
and define the TLS estimator.
The consistency theorems
for different moment conditions on the errors
and for different senses of consistency
are stated in Section~\ref{sec:knownconsth},
and their proofs are sketched in Section~\ref{sec:sketch}.
Section~\ref{sect:eue} states the existence and uniqueness of the TLS estimator.
Auxiliary theoretical constructions and theorems are
presented in Section~\ref{sec:Theory}.
Section~\ref{sec:psdgep} explains the relationship between the TLS
estimator and the generalized eigenvalue problem.
The results in Section~\ref{sec:psdgep} are used
in construction of the TLS estimator and in the proof
of its uniqueness.
Detailed proofs are moved to the appendix (Section~\ref{sec:apx:Proofs}).

\subsection*{Notations}
At first, we list the \textit{general notation}.
For $v\,{=}\,(x_k)_{k=1}^n$ being a vector,
$\|v\| \,{=}\, \sqrt{\sum_{k=1}^n x_k^2}$ is the 2-norm of $v$.

For $M=(x_{i,j})_{i=1}^m {}_{j=1}^n$ being an $m\times n$ matrix,
$ \|M\| = \max_{v\neq 0} \frac{\|M v\|}{\|v\|}
= \sigma_{\max}(M)$ is
the spectral norm of $M$;
$\|M\|_F=\sqrt{\sum_{i=1}^m \sum_{j=1}^n x_{i,j}^2}$
is the Frobenius norm of $M$;\allowbreak{}
$\sigma_{\max}(M) = \sigma_1(M) \ge \sigma_2(M) \ge \cdots
\ge \sigma_{\min(m,n)}(M) \ge 0$
are the singular values of $M$, arranged in descending order;
$\colspana<M>$
is the column space of $M$;
$\rank M$ is the rank of $M$.
For a square $n\times n$ matrix $M$,\spacefactor=3000{}
$\defect M = n - \rank{M}$ is rank deficiency of $M$;
$\trace M = \sum_{i=1}^n x_{i,i}$ is the trace of $M$;
$\chi_M(\lambda) = \det(M - \lambda I)$
is the characteristic polynomial of $M$.
If $M$ is an
$n \times n$ matrix with real eigenvalues
(e.g., if $M$ is Hermitian or if $M$ admits a decomposition
$M = A B$, where $A$ and $B$ are Hermitian matrices, and
either $A$ or $B$ is positive semidefinite),
$\lambda_{\min} (M) = \lambda_1(M) \le \lambda_2(M)
    \le \cdots \le \lambda_n(M) = \lambda_{\max}(M)$
are eigenvalues of $M$ arranged in ascending order.

For $V_1$ and $V_2$ being linear subspaces of $\mathbb{R}^n$ of equal dimension $\dim V_1 = \dim V_2$,
$\|\sin\angle (V_1, V_2) \| = \| P_{V_1} - P_{V_2} \| =
    \| P_{V_1} (I - P_{V_2}) \|$
is  the greatest sine of the canonical angles between $V_1$ and $V_2$.
See Section~\ref{sss:sin} for more general definitions.

Now, list \textit{the model-specific notations}.
The notations (except for the matrix $\varSigma$) come from  \cite{KMF2005}.
The notations are listed here only for reference;
they are introduced elsewhere in this paper --
in Sections \ref{sec:intro} and \ref{sec:modelest}.

$n$ is the number of regressors, i.e., the number of explanatory variables for each observation;
$d$ is the number of response variables for each observation;
$m$ is the number of observations, i.e.,  the sample size.
\begin{description}
\item
[\def\arraystretch{0.5}{\inlinehtpicture{$C_0 = (A_0,\; B_0) =
\left(\begin{array}{@{}cc@{}}
\scriptstyle(a^0_1)^\top & \scriptstyle(a^0_1)^\top X_0 \\
\hdotsfor{2} \\
\scriptstyle(a^0_m)^\top & \scriptstyle(a^0_m)^\top X_0
\end{array}\right)
=
\left(\begin{array}{@{}c@{}}
\scriptstyle(c^0_1)^\top \\
\hdotsfor{1} \\
\scriptstyle(c^0_m)^\top
\end{array}\right)
$}}]
is the matrix of true variables.
It is an $m \times (n+d)$ nonrandom matrix.
The left-hand block $A_0$ of size $m \times n$
consists of true explanatory variables,
and the right-hand block $B_0$ of size $m \times d$
consists of true response variables.
\item
[\def\arraystretch{0.5}{\inlinehtpicture{$
\widetilde C = (\tilde A,\; \widetilde B)
=
\left(\begin{array}{@{}cc@{}}
\scriptstyle\tilde a_1^\top & \scriptstyle\tilde b_1^\top \\
\hdotsfor{2} \\
\scriptstyle\tilde a_m^\top & \scriptstyle\tilde b_m^\top
\end{array}\right)
=
\left(\begin{array}{@{}c@{}}
\scriptstyle\tilde c_1^\top \\
\ldots \\
\scriptstyle\tilde c_m^\top
\end{array}\right)
=
\left(\begin{array}{@{}ccc@{}}
\scriptstyle\delta_{1,1} & \scriptstyle\ldots & \scriptstyle\delta_{1,n+d} \\
\hdotsfor{3} \\
\scriptstyle\delta_{m,1} & \scriptstyle\ldots & \scriptstyle\delta_{m,n+d}
\end{array}\right)
$}}] is the matrix of errors.
It is an $m \times (n+d)$ random matrix.
\item
[\def\arraystretch{0.5}{\inlinehtpicture{$
C = (A,\; B) = C_0 + \widetilde C
=
\left(\begin{array}{@{}cc@{}}
\scriptstyle a_1^\top &\scriptstyle b_1^\top \\
\hdotsfor{2} \\
\scriptstyle a_m^\top &\scriptstyle b_m^\top
\end{array}\right)
$}}] is the matrix of observations.
It is an $m \times (n+d)$ random matrix.

\item[$\varSigma$] is a covariance matrix of errors for one observation.
For every $i$, it is assumed that $\Me\tilde c_i = 0$
and $\Me \tilde c_i \tilde c_i^\top  = \varSigma$.
The matrix $\varSigma$ is symmetric, positive semidefinite,
nonrandom, and of size $(n+d)\times(n+d)$.
It is assumed known when we construct the TLS estimator.

\item[$X_0$] is the matrix of true regression parameters.
It is a nonrandom $n \times d$ matrix
and is a parameter of interest.
\item[$\xxtrue = \binom{X_0}{-I}$] is
an augmented matrix of regression coefficients.
It is a nonrandom $(n+d) \times d$ matrix.
\item[$\widehat X$] is the TLS estimator of the matrix $X_0$.
\item[$\hxx$] is a matrix whose column space
$\colspana<\hxx>$ is considered an estimator of the subspace $\colspana<\xxtrue>$.
The matrix $\hxx$ is of size $(n+d)\times d$.
For fixed $m$ and $\varSigma$, $\hxx$ is a Borel measurable function
of the matrix $C$.
\end{description}
While in consistency theorems $m$ tends to $\infty$,
all matrices in this list except $\varSigma$, $X_0$ and $\xxtrue$
silently depend on $m$.
For example, in equations ``$\lim_{m\to\infty} \lambda_{\min} (A_0^\top A_0) = +\infty$''
and ``$\widehat X \to X_0$ almost surely''
the matrices $A_0$ and $\widehat X$ depend on $m$.{\sloppy\par}

\section{The model and the estimator}\label{sec:modelest}
\subsection{Statistical model}
It is assumed that the matrices $A_0$ and $B_0$ satisfy the relation
\begin{equation}
\label{eq:A0X0B0} \underset{m\times n} {A_0} \cdot \underset{n\times d}
{ \xtrue} = \underset{m\times d} {B_0}.
\end{equation}
They are observed with measurement errors $\tilde A$ and $\widetilde B$,
that is
\[
A=A_0+\tilde A, \qquad B=B_0+\widetilde B.
\]
The matrix $\xtrue$ is a parameter of interest.

Rewrite the relation in an \querymark{Q1}implicit form.
Let the $m \times (n+d)$ block matrices
$C_0, \widetilde C, C\in \mathbb{R}^{m\times(n+d)}$
be constructed by binding ``respective versions'' of matrices $A$ and $B$:
\[
C_0=[A_0\ B_0], \qquad \widetilde C=[\tilde
A\ \widetilde B], \qquad C=[A\ B].
\]
Denote
$\xxtrue= (\begin{smallmatrix}
\xtrue \\ -I_d
\end{smallmatrix} )
$.
Then
\begin{equation}
\label{eq:C0X0ext0} \underset{m\times(n+d)} {C_0} \cdot \underset{(n+d)
\times d}\xxtrue = \underset{m\times d}0.
\end{equation}

The entries of the matrix $\widetilde C$ are denoted $\delta_{ij}$;
the rows are $\tilde c_i$:
\[
\widetilde C=(\delta_{ij})_{i=1}^m
{}_{j=1}^{n+d}, \qquad \tilde c_i=(
\delta_{ij})_{j=1}^{n+d}.
\]

Throughout the paper the following three conditions are assumed to be true:\label{GlobalCond}
\begin{align}
&\mbox{The rows $\tilde c_i$ of the matrix $\widetilde C$ are
mutually independent random vectors.} \label{cond:G2}
\\
&\mbox{$\Me \widetilde C=0$, and $\Me \tilde c_i^{}
\tilde c_i^\top := (\Me \delta_{ij}
\delta_{ik})_{i=1,\,\,k=1}^{n+d\,\,n+d} = \varSigma$ for all $i{=}1{,
\ldots,m}$.} \label{cond:G1}
\\
&\mbox{$\rank (\varSigma \xxtrue)=d$.} \label{cond:G3}
\end{align}

\begin{exmp}[simple univariate linear regression with intercept]\label{example21}
For $i=1,\ldots,m$
\[
\begin{cases}
x_i=\xi_i+\delta_i; \\
y_i=\beta_0 + \beta_1 \xi_i + \varepsilon_i,
\end{cases}
\]
where the measurement errors $\delta_i$, $\varepsilon_i$,
$i=1,\ldots, m$, -- all the $2m$ variables --
are uncorrelated,
$\Me \delta_i = 0$, $\Me \delta_i^2 = \sigma_\delta^2$,
$\Me \varepsilon_i = 0$, and $\Me \varepsilon_i^2 = \sigma_\varepsilon^2$.
A sequence $\{(x_i,y_i),\ i=1,\ldots,m\}$ is observed.
The parameters $\beta_0$ and $\beta_1$ are to be estimated.

This example is taken from \cite[Section 1.1]{ChengVanNess}.
But the notation in Example~\ref{example21} and elsewhere in the paper is different.
Our notation is $a^0_i = (1, \xi_i)^\top$, $b^0_i = \eta_i$,
$a_i = (1, x_i)^\top$, $b_i = y_i$, $\delta_{i,1} = 0$,
$\delta_{i,2} = \delta_i$, $\delta_{i,3} = \varepsilon_i$,
$\varSigma = \diag(0, \sigma_\delta^2, \sigma_\varepsilon^2)$,
and $X_0 = (\beta_0, \beta_1)^\top$.
\end{exmp}

\begin{rem}\label{rem:condG3}
For some matrices $\varSigma$, \eqref{cond:G3} is satisfied for any $n\times d$ matrix $X_0$.
If the matrix $\varSigma$ in nonsingular, then condition \eqref{cond:G3} is satisfied.
If the errors in the explanatory variables and in the response
are uncorrelated, i.e., if the matrix $\varSigma$ has a
block-diagonal form
\[
\varSigma = \begin{pmatrix} \varSigma_{aa} & 0 \\ 0 & \varSigma_{bb} \end{pmatrix}
\]
(where $\varSigma_{aa} = \Me \tilde a_i \tilde a_i^\top$
and
$\varSigma_{bb} = \Me \tilde b_i \tilde b_i^\top$)
with nonsingular matrix $\varSigma_{bb}$,
then condition \eqref{cond:G3} is satisfied.
For example, in the basic mixed LS-TLS problem
$\varSigma$ is diagonal, $\varSigma_{bb}$ is nonsingular, and so
\eqref{cond:G3} holds true.
If the null-space of the matrix $\varSigma$
(which equals $\colspana<\varSigma>^\bot$ because $\varSigma$ is symmetric)
lies inside the subspace spanned by the first $n$ (of $n+d$) standard basis vectors,
then condition \eqref{cond:G3} is also satisfied.
On the other hand, if $\rank \varSigma < d$, then condition \eqref{cond:G3} is not satisfied.
\end{rem}

\subsection{Total least squares (TLS) estimator}
First, find the $m \times (n+d)$ matrix $\Delta$ for which the constrained minimum is attained
\begin{equation}
\begin{cases}
\| \Delta \, \pinvp(\varSigma^{1/2})  \|_F\to\min; \\
\Delta \, (I-P_\varSigma) =0; \\
\rank(C-\Delta) \le n.
\end{cases} \label{eqTLS118}
\end{equation}
\label{place:defhxx}\relax
Hereafter $\pinvb(\varSigma)$ is the Moore--Penrose pseudoinverse matrix of
the matrix $\varSigma$,
$P_\varSigma$ is an orthogonal projector onto the column space of $\varSigma$,
$P_\varSigma=\varSigma \pinvb(\varSigma)$.

Now, show that the minimum in \eqref{eqTLS118} is attained.
The constraint $\rank(C-\Delta) \le n $ is satisfied if and only if
all the minors of $C-\Delta$ of order $n+1$ vanish.
Thus the set of all $\Delta$
that satisfy the constraints (the constraint set)
is defined by $\frac{m! (n+d)!}{(n+1)!^2 (m-n-1)! (d-1)!} + 1$
algebraic equations; and so it is closed.
The constraint set is nonempty \textit{almost surely\/}
because it contains $\widetilde C$.
The functional $\|\Delta \pinvb(\varSigma)\|_F$
is a pseudonorm on $\mathbb{R}^{m\times (n+d)}$,
but it is a norm on the linear subspace
$\{\Delta : \Delta \, (I - \pinvb(\varSigma)) = 0\}$,
where it induces a natural subspace topology.
The constraint set is closed on the subspace
(with the norm),
and whenever it is nonempty (i.e., almost surely),
it has a minimal-norm element.

Notice that under condition \eqref{cond:G3}
the constrain set is non-empty always
and not just almost surely. This follows from
Proposition~\ref{prop:gep6.7}.

For the matrix $\Delta$ that is a solution to minimization problem
\eqref{eqTLS118}, consider the rowspace
 $\colspana<(C-\Delta)^\top>$ of the matrix $C-\Delta$.
Its dimension does not exceed $n$.
Its orthogonal basis can be completed to the orthogonal basis
in $\mathbb{R}^{n+d}$,
and the complement consists of $n+d-\rank(C-\Delta) \ge d$
vectors.
Choose $d$ vectors from the complement, which are linearly independent,
and bind them (as column-vectors) into $(n+d) \times d$
matrix $\hxx$.
The matrix $\hxx$ satisfies the equation
\begin{gather}
(C-\Delta) \widehat X_{\rm ext} = 0. \label{eqTLSX124}
\end{gather}
If the lower $d\times d$ block of the matrix $\hxx$ is a
nonsingular matrix,
by linear transformation of columns
(i.e., by right-multiplying by some nonsingular matrix)
the matrix $\hxx$ can be transformed to the form
\[
\begin{pmatrix} \widehat X \\ -I
\end{pmatrix},
\]
where $I$ is $d\times d$ identity matrix.
The matrix $\widehat X$ satisfies the equation
\begin{gather}
(C-\Delta) \begin{pmatrix} \widehat X \\  -I \end{pmatrix} = 0 . \label{eqTLSX126}
\end{gather}
(Otherwise, if the lower block of the matrix $\hxx$ is singular,
then our estimation fails.
Note that whether the lower block of the matrix $\hxx$ is singular
might depend not only on the observations $C$,
but also on the choice of the matrix $\Delta$ where the minimum
in \eqref{eqTLS118} in attained and the $d$ vectors that make matrix $\hxx$.
We will show that the lower block of the matrix $\hxx$ is nonsingular
with high probability regardless of the choice of $\Delta$ and $\widehat X_{\rm ext}$.)

Columns of the matrix $\widehat X_{\rm ext}$
should span the eigenspace (generalized invariant space)
of the matrix pencil $\langle C^\top C , \varSigma\rangle$
which corresponds to the
$d$ smallest generalized eigenvalues.
%
That the columns of the matrix $\hxx$
span the generalized invariant space
corresponding to finite generalized eigenvalues
is written in the matrix notation
as follows:
\[
\exists M{\in}\mathbb{R}^{d\times d}:\; C^\top C \widehat
X_{\rm ext} = \varSigma \widehat X_{\rm ext} M.
\]

Possible problems that may arise in the course of solving
the minimization problem~\eqref{eqTLS118} are discussed in \cite{Shklyar2011}.
We should mention that our two-step definition
$\eqref{eqTLS118}$ \& $\eqref{eqTLSX126}$ of the TLS estimator
is slightly different from the conventional definition
in \cite[Sections 2.3.2 and 3.2]{VanHuffelVandewalle1991} or in
\cite{KVH02}.
In these papers, the problem from which the estimator
$\widehat X$ is found is equivalent to the following:
\begin{equation}
\begin{cases}
\| \Delta \, \pinvp(\varSigma^{1/2}) \|_F\to\min; \\
\Delta \, (I-P_\varSigma) = 0; \\
(C - \Delta) \begin{pmatrix} \widehat X \\ -I \end{pmatrix} = 0,
\end{cases} \label{eqTLS700}
\end{equation}
where the optimization is performed for $\Delta$ and $\widehat X$ that satisfy the constraints in
\eqref{eqTLS700}.
If our estimation defined with \eqref{eqTLS118} and \eqref{eqTLSX126} succeeds,
then the minimum values in \eqref{eqTLS118} and \eqref{eqTLS700} coincide,
and the minimum in \eqref{eqTLS700} is attained for $(\Delta, \widehat X)$
that is the solution to \eqref{eqTLS118} \& \eqref{eqTLSX126}.
\querymark{Q3}Conversely, if our estimation succeeds for at least one choice of $\Delta$ and $\hxx$,
then all the solutions to  \eqref{eqTLS700} can be obtained with different choices of
$\Delta$ and $\hxx$.
However, strange things may happen if our estimation always fails.

Besides (\ref{eqTLS118}), consider the optimization problem
\begin{equation}
\begin{cases}
\lambda_{\max} (\Delta \pinvb(\varSigma) \Delta^\top) \to\min; \\
\Delta \, (I-P_\varSigma) = 0; \\
\rank(C-\Delta) \le n.
\end{cases} \label{eqTLS220}
\end{equation}
It will be shown that every $\Delta$ that minimizes (\ref{eqTLS118})
also minimizes (\ref{eqTLS220}).

We can construct the optimization problem
that generalizes \querymark{Q4}both (\ref{eqTLS118}) and (\ref{eqTLS220}).
Let $\uin{M}$ be a unitarily invariant norm
on $m\times(n+d)$ matrices.
Consider the optimization problem
\begin{equation}
\begin{cases}
\uin{\Delta \, \pinvp(\varSigma^{1/2})}\to\min; \\
\Delta \, (I-P_\varSigma) =0; \\
\rank(C-\Delta) \le n.
\end{cases} \label{eqTLS118uin}
\end{equation}
Then every $\Delta$ that minimizes (\ref{eqTLS118})
also minimizes (\ref{eqTLS118uin}), and
every $\Delta$ that minimizes (\ref{eqTLS118uin})
also minimizes (\ref{eqTLS220}).
If $\uin{M}$ is the Frobenius norm, then
optimization problems (\ref{eqTLS118}) and (\ref{eqTLS118uin}) coincide,
and if $\uin{M}$ is the spectral norm, then
optimization problems (\ref{eqTLS220}) and (\ref{eqTLS118uin}) coincide.

\begin{rem}
A solution to problem (\ref{eqTLS118}) or (\ref{eqTLS220})
does not change if the matrix $\varSigma$ is multiplied by
a positive scalar factor.
Thus, instead of assuming that the matrix $\varSigma$
is known completely, we can assume that $\varSigma$
is known up to a scalar factor.
\end{rem}

\section{Known consistency results}\label{sec:knownconsth}
In this section we briefly revise known consistency results.
One of conditions for the consistency of the TLS estimator
is the convergence of $\frac{1}{m} A_0^\top A_0$
to a nonsingular matrix.
It is required, for example, in \cite{Gleser1981}.
The condition is relaxed in the paper by Gallo
\cite{Gallo82}.
\begin{thm}[Gallo \cite{Gallo82}, Theorem 2]\label{thm:Gallo}
Let $d=1$,
\begin{align*}
m^{-1/2} \lambda_{\min} \bigl(A_0^\top
A_0 \bigr) &\to \infty \quad\mbox{as}\quad m\to\infty,
\\
\frac{\lambda_{\min}^2 (A_0^\top A_0)}{\lambda_{\max} (A_0^\top A_0)}& \to \infty \quad\mbox{as}\quad m\to\infty,
\end{align*}
and the measurement errors $\tilde c_i$ are identically distributed,
with finite fourth moment $\Me \|\tilde c_i\|^4 < \infty$.
Then $\widehat X \probto \xtrue$, $m\to\infty$.
\end{thm}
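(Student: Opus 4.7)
My plan is to recast $\hxx$ via the generalized eigenvalue problem (GEP) characterization sketched around~\eqref{eqTLSX126}, and then treat $\langle C^\top C,\varSigma\rangle$ as a stochastic perturbation of the deterministic pencil $\langle \Me[C^\top C],\varSigma\rangle$. For $d = 1$ the estimator $\hxx$ is, up to a scalar, a generalized eigenvector of $\langle C^\top C,\varSigma\rangle$ for the smallest finite generalized eigenvalue; equivalently, it minimizes the Rayleigh-type quotient $R(v) = v^\top C^\top C v / v^\top \varSigma v$ on the complement of $\Kernel(\varSigma)$. Proving $\widehat X \probto X_0$ thus reduces to showing that the minimizing direction converges, in the sense of the canonical angle, to $\colspana<\xxtrue>$ and that the last coordinate of $\hxx$ is nonzero with probability tending to one, so that the normalization $\hxx = (\widehat X^\top,\,-1)^\top$ is legitimate.

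Next I would compute the spectral gap of the expected pencil. Since $C_0 \xxtrue = 0$, the relation $(C_0^\top C_0 + m\varSigma)\xxtrue = m\varSigma \xxtrue$ shows that $\xxtrue$ is a generalized eigenvector of $\langle \Me[C^\top C],\varSigma\rangle$ with eigenvalue $m$. For any $v$ that is $\varSigma$-orthogonal to $\xxtrue$ the Rayleigh quotient equals $m + \|C_0 v\|^2 / v^\top \varSigma v$; writing $C_0 = A_0\,[I_n\;X_0]$ and using that the kernel of $[I_n\;X_0]$ is $\colspana<\xxtrue>$, one obtains $\|C_0 v\|^2 \ge c\lambda_{\min}(A_0^\top A_0)\, v^\top \varSigma v$ for a positive constant $c$ depending only on $X_0$ and $\varSigma$. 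Hence the gap above the smallest generalized eigenvalue of the expected pencil is of order $\lambda_{\min}(A_0^\top A_0)$.

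Third I would control the stochastic perturbation $E = C^\top C - \Me[C^\top C] = (\widetilde C^\top \widetilde C - m\varSigma) + C_0^\top \widetilde C + \widetilde C^\top C_0$. By the finite fourth moment assumption, each entry of $\widetilde C^\top \widetilde C - m\varSigma$ is a centered sum of $m$ i.i.d.\ summands with bounded variance, and a Chebyshev bound on the Frobenius norm yields $\|\widetilde C^\top \widetilde C - m\varSigma\| = O_P(\sqrt m)$. A direct variance computation gives $\Me \|C_0^\top \widetilde C\|_F^2 = \trace(C_0^\top C_0)\,\trace(\varSigma) = O(\lambda_{\max}(A_0^\top A_0))$, hence $\|C_0^\top \widetilde C\| = O_P(\sqrt{\lambda_{\max}(A_0^\top A_0)})$. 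The two hypotheses translate exactly to $\sqrt m = o(\lambda_{\min}(A_0^\top A_0))$ and $\sqrt{\lambda_{\max}(A_0^\top A_0)} = o(\lambda_{\min}(A_0^\top A_0))$, so $\|E\| = o_P(\lambda_{\min}(A_0^\top A_0))$, strictly smaller than the spectral gap identified above.

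A sine--theta / Davis--Kahan type argument for the GEP (obtained for instance by passing to $\varSigma^{1/2}$-coordinates on the range of $\varSigma$) then gives $\|\sin\angle(\colspana<\hxx>,\,\colspana<\xxtrue>)\| \probto 0$, which translates into $\widehat X \probto X_0$ once the last-coordinate normalization is justified. The main technical obstacle I expect is the possible singularity of $\varSigma$: the Davis--Kahan argument must be performed on the range of $\varSigma$ rather than on all of $\mathbb R^{n+d}$, and condition~\eqref{cond:G3} must be invoked to ensure that $\xxtrue$ remains identifiable after this reduction and that the last-coordinate normalization producing $\widehat X$ is valid with probability approaching one. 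A minor secondary issue is the explicit dependence of the constant $c$ in the gap bound on $X_0$ and $\varSigma$; this causes no difficulty because both are fixed as $m \to \infty$.
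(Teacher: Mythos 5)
Your overall route is the same one the paper uses for its own (stronger) Theorem~\ref{thm-3.1}, which subsumes the Gallo statement with $r=2$: view $\hxx$ as the minimizer of the Rayleigh quotient of the pencil $\langle C^\top C,\varSigma\rangle$ (this step itself needs Proposition~\ref{prop-6eximl} together with the definiteness of the pencil, Proposition~\ref{prop-mpdef}, and the attainment issue of Remark~\ref{rem:lem-pertrv0}), exhibit a spectral gap of order $\lambda_{\min}(A_0^\top A_0)$ above the eigenvalue at $\xxtrue$, show the stochastic perturbation is $o_P$ of that gap, and then convert a sine bound into $\widehat X\probto X_0$ through the nonsingularity of the lower block (Theorem~\ref{thm-neqsindist}). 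Your norm estimates in the third step are correct, and they use both of Gallo's hypotheses; the paper instead rescales by $N=C_0^\top C_0+\lambda_{\min}(A_0^\top A_0)I$, which is precisely what lets it discard the condition $\lambda_{\min}^2/\lambda_{\max}\to\infty$, so your unnormalized comparison is legitimate for the statement as given, just less economical.

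The genuine gap is in the sine--theta step. A Davis--Kahan argument ``in $\varSigma^{1/2}$-coordinates on the range of $\varSigma$'' controls the angle between $\varSigma^{1/2}\hxx$ and $\varSigma^{1/2}\xxtrue$, not the Euclidean angle between $\hxx$ and $\xxtrue$ that Theorem~\ref{thm-neqsindist} requires, and in the singular-$\varSigma$ case (which is the relevant one here: Gallo's model and Example~\ref{example21} have error-free variables) the two are not comparable. For instance, with $d=1$ and $\varSigma=\sigma^2 e_{n+1}e_{n+1}^\top$ (only the response observed with error), the range of $\varSigma$ is one-dimensional, so the images of $\hxx$ and $\xxtrue$ are always collinear and the angle in those coordinates is identically zero, carrying no information about $\widehat X-X_0$. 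Moreover, the reduction to the range of $\varSigma$ is not a restriction of the quadratic form but a Schur-complement elimination, since components of $v$ in $\Kernel(\varSigma)$ enter the numerator $v^\top C^\top C v$ but not the denominator $v^\top\varSigma v$; a generic sin--theta theorem for definite pencils cannot simply be quoted, and condition \eqref{cond:G3} only secures identifiability, it does not repair this. This difficulty is exactly why the paper proves its own perturbation Lemma~\ref{lem-pertrv0} (and Lemma~\ref{lem-pertrVV0}), where the denominator matrix $B\ge0$ may be singular and the Euclidean sine $\sin\angle(x_*,x_0)$ is bounded through the eigengap of the numerator matrix $A$; your argument needs such a lemma (or a carefully executed Schur-complement reduction) at this point, after which the rest of your plan goes through.
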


The theorem can be generalized for the multivariate regression.  The
condition that the errors on different observations have the same
distribution can be dropped.
Instead, Kukush and Van Huffel \cite{KVH02} assume that the fourth moments of
the error distributions are bounded.
\begin{thm}[Kukush and Van Huffel \cite{KVH02}, Theorem 4a]\label{thm:Kukush:4a}
Let
\begin{align*}
\sup_{\substack{i\ge 1\\ j=1{,\ldots,}n+d}} \Me |\delta_{ij}|^{4} &<
\infty,
\\
m^{-1/2} \lambda_{\min} \bigl(A_0^\top
A_0 \bigr) &\to \infty\quad\mbox{as}\quad m\to\infty,
\\
\frac{\lambda_{\min}^2 (A_0^\top A_0)}{\lambda_{\max} (A_0^\top A_0)} &\to \infty\quad\mbox{as}\quad m\to\infty.
\end{align*}
Then $\widehat X\probto \xtrue$ as $m\to\infty$.
\end{thm}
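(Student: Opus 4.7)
The strategy is to exploit the generalized eigenvalue characterization of $\hxx$ from Section~\ref{sec:psdgep}: its columns span the invariant subspace of the pencil $\langle C^\top C, \varSigma\rangle$ corresponding to the $d$ smallest generalized eigenvalues. For the noise-free pencil $\langle C_0^\top C_0, \varSigma\rangle$ the analogous subspace is exactly $\colspana<\xxtrue>$, since $C_0 \xxtrue = 0$ by \eqref{eq:C0X0ext0}. Thus the proof splits into three tasks: (i) quantify how close $C^\top C$ is to the deterministic proxy $C_0^\top C_0 + m\varSigma$, (ii) show that the generalized invariant subspace of $\langle C_0^\top C_0 + m\varSigma, \varSigma\rangle$ for the $d$ smallest eigenvalues still coincides with $\colspana<\xxtrue>$ and is separated from the rest of the spectrum by a gap growing in $m$, and (iii) translate subspace convergence into convergence of $\widehat X$ using the structure of $\xxtrue$.

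For (i), I would split
\[
C^\top C = C_0^\top C_0 + C_0^\top \widetilde C + \widetilde C^\top C_0 + \widetilde C^\top \widetilde C
\]
and center the last summand around $m\varSigma$. Independence of the rows of $\widetilde C$ together with $\sup_{i,j}\Me\delta_{ij}^4 < \infty$ yields, by entrywise variance estimates and Chebyshev's inequality, $\|\widetilde C^\top \widetilde C - m\varSigma\| = O_P(\sqrt m)$ and $\|C_0^\top \widetilde C\|_F = O_P(\sqrt{\lambda_{\max}(A_0^\top A_0)})$; hence the total perturbation has spectral norm of order $O_P(\sqrt m + \sqrt{\lambda_{\max}(A_0^\top A_0)})$. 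For (ii), every $v\in\colspana<\xxtrue>$ satisfies $(C_0^\top C_0 + m\varSigma) v = m \varSigma v$, so $v$ corresponds to generalized eigenvalue $m$; condition \eqref{cond:G3} guarantees $\varSigma v \neq 0$ for nonzero such $v$. Because $\rank (C_0) = n$ once $\lambda_{\min}(A_0^\top A_0) > 0$, the remaining generalized eigenvalues are at least $m + c\,\lambda_{\min}(A_0^\top A_0)$ for some $c>0$ depending on $\varSigma$, so the spectral gap is of order $\lambda_{\min}(A_0^\top A_0)$.

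A Davis--Kahan type perturbation inequality for the pencil (derived from the framework of Section~\ref{sec:psdgep} together with the auxiliary results of Section~\ref{sec:Theory}) then gives
\[
\bigl\|\sin\angle\bigl(\colspana<\hxx>,\,\colspana<\xxtrue>\bigr)\bigr\| = O_P\!\left(\frac{\sqrt m + \sqrt{\lambda_{\max}(A_0^\top A_0)}}{\lambda_{\min}(A_0^\top A_0)}\right)\!,
\]
and the two hypotheses $m^{-1/2}\lambda_{\min}(A_0^\top A_0)\to\infty$ and $\lambda_{\min}^2(A_0^\top A_0)/\lambda_{\max}(A_0^\top A_0)\to\infty$ force this bound to $0$ in probability. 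Since $\xxtrue$ has lower $d\times d$ block equal to $-I$, convergence of $\colspana<\hxx>$ to $\colspana<\xxtrue>$ in canonical angle implies that the lower block of $\hxx$ is nonsingular with probability tending to $1$; after normalizing this block to $-I$, the upper block (which is $\widehat X$) satisfies $\widehat X \probto X_0$.

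The main obstacle is step (ii): since $\varSigma$ may be singular, the classical symmetric-matrix perturbation bounds do not apply directly, and one must argue inside the generalized eigenvalue framework of Section~\ref{sec:psdgep}, using condition \eqref{cond:G3} precisely to ensure that the invariant subspace associated with $\colspana<\xxtrue>$ is well-defined and uniformly separated. Combined with the non-uniform spectrum of $A_0^\top A_0$, whose conditioning is controlled only by the mild requirement $\lambda_{\min}^2/\lambda_{\max}\to\infty$, this is the part that demands the most careful bookkeeping.
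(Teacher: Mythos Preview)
Your outline is correct and would prove the theorem as stated: applying Lemma~\ref{lem-pertrVV0} with $A=C_0^\top C_0$, $B=\varSigma$, $\tilde A = C^\top C - m\varSigma - C_0^\top C_0$ and $X_0=\xxtrue$ yields exactly your bound, and the two growth hypotheses kill the two terms $\sqrt m/\lambda_{\min}(A_0^\top A_0)$ and $\sqrt{\lambda_{\max}(A_0^\top A_0)}/\lambda_{\min}(A_0^\top A_0)$. This is essentially the original argument of Kukush and Van~Huffel.

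The paper, however, does not prove Theorem~\ref{thm:Kukush:4a} directly; it obtains it as the case $r=2$ of the strictly stronger Theorem~\ref{thm-3.1}, where the hypothesis $\lambda_{\min}^2(A_0^\top A_0)/\lambda_{\max}(A_0^\top A_0)\to\infty$ is dropped entirely. The device that makes this possible is a renormalization you do not use: instead of comparing $C^\top C - m\varSigma$ to $C_0^\top C_0$ in the raw scale, the paper conjugates everything by $N^{-1/2}$ with $N=C_0^\top C_0+\lambda_{\min}(A_0^\top A_0)I$. After this rescaling the unperturbed matrix $N^{-1/2}C_0^\top C_0 N^{-1/2}$ has $\lambda_{d+1}\ge\tfrac12$ (a constant gap, regardless of the conditioning of $A_0^\top A_0$), and the cross term $N^{-1/2}C_0^\top\widetilde C N^{-1/2}$ is controlled via $\trace(C_0 N^{-1}C_0^\top)\le n$ rather than via $\lambda_{\max}(A_0^\top A_0)$. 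Consequently the perturbation $\epsilon$ tends to $0$ under $m^{-1/2}\lambda_{\min}(A_0^\top A_0)\to\infty$ alone. Your approach is simpler and perfectly adequate for the statement at hand, but it genuinely consumes the third hypothesis; the paper's renormalization is what buys the improvement to Theorem~\ref{thm-3.1}.
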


Here is the strong consistency theorem:
\begin{thm}[Kukush and Van Huffel \cite{KVH02}, Theorem 4b]\label{thm:Kukush:4b}
Let for some $r\ge 2$ and $m_0\ge 1$,
\begin{align*}
\sup_{\substack{i\ge 1\\ j=1{,\ldots,}n+d}} \Me |\delta_{ij}|^{2r}&<\infty,
\\
\sum_{m=m_0}^\infty \biggl(\frac{\sqrt{m}} {
\lambda_{\min} (A_0^\top A_0)}
\biggr)^r &< \infty,
\\
\sum_{m=m_0}^\infty \biggl( \frac
{\lambda_{\max}  (A_0^\top A_0)}{
\lambda_{\min}^2 (A_0^\top A_0)}
\biggr)^r &<\infty.
\end{align*}
Then $\widehat X \to \xtrue$ as $m\to\infty$, almost surely.
\end{thm}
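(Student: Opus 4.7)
The plan is to reduce the strong consistency of $\widehat X$ to an almost-sure perturbation bound on the generalized invariant subspace of the pencil $\langle C^\top C,\varSigma\rangle$. By the characterization at the end of Section~\ref{sec:modelest}, $\colspana<\hxx>$ spans the generalized invariant subspace at the $d$ smallest finite generalized eigenvalues; since $C_0\xxtrue = 0$ by \eqref{eq:C0X0ext0}, the target subspace $\colspana<\xxtrue>$ plays the same role for the unperturbed pencil $\langle C_0^\top C_0,\varSigma\rangle$, and condition \eqref{cond:G3} guarantees a positive spectral gap separating it from the remaining eigendirections. Once the sine of the angle between these two subspaces is shown to tend to $0$ almost surely, the lower $d\times d$ block of $\hxx$ must be nonsingular for all sufficiently large $m$ (because the lower block of $\xxtrue$ is $-I_d$), and normalizing $\hxx$ into the form $\binom{\widehat X}{-I}$ transfers the subspace convergence into $\widehat X\to\xtrue$ a.s.

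To carry this out I would first decompose
\[
C^\top C - m\varSigma \;=\; C_0^\top C_0 + \bigl(\widetilde C^\top \widetilde C - m\varSigma\bigr) + C_0^\top \widetilde C + \widetilde C^\top C_0,
\]
so that the signal term $C_0^\top C_0$ has smallest nonzero eigenvalue at least $\lambda_{\min}(A_0^\top A_0)$ on the orthogonal complement of $\colspana<\xxtrue>$, while the remaining three terms constitute the noise. A Davis--Kahan / Stewart sin-$\theta$ argument, adapted to the possibly singular right-hand-side $\varSigma$ via the generalized-eigenvalue machinery to be developed in Section~\ref{sec:psdgep}, would then give a bound of the shape
\[
\bigl\|\sin\angle\bigl(\colspana<\hxx>,\colspana<\xxtrue>\bigr)\bigr\| \;\le\; \frac{K\,\|\mathrm{noise}\|}{\lambda_{\min}(A_0^\top A_0)}.
\]

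For almost-sure control of the noise I would use Markov's inequality combined with Borel--Cantelli. The hypothesis $\sup_{i,j}\Me|\delta_{ij}|^{2r}<\infty$, together with independence of the rows of $\widetilde C$, yields (applied entrywise via Rosenthal / Marcinkiewicz--Zygmund) the moment estimates
\[
\Me\bigl\|\widetilde C^\top \widetilde C - m\varSigma\bigr\|^{r} \le K\,m^{r/2}, \qquad \Me\bigl\|C_0^\top \widetilde C\bigr\|^{2r} \le K\,\bigl(\lambda_{\max}(A_0^\top A_0)\bigr)^{r},
\]
and Markov at orders $r$ and $2r$ respectively gives
\[
\Prob\bigl(\|\sin\angle(\colspana<\hxx>,\colspana<\xxtrue>)\|>\varepsilon\bigr) \;\le\; K\varepsilon^{-r}\biggl(\frac{\sqrt m}{\lambda_{\min}(A_0^\top A_0)}\biggr)^{\!r} \;+\; K\varepsilon^{-2r}\biggl(\frac{\lambda_{\max}(A_0^\top A_0)}{\lambda_{\min}^2(A_0^\top A_0)}\biggr)^{\!r},
\]
which is exactly the shape rendered summable by the two summability hypotheses of Theorem~\ref{thm:Kukush:4b}. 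Borel--Cantelli then delivers a.s.\ convergence of the sine, and the reduction described above yields the desired $\widehat X\to\xtrue$ a.s.

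The main obstacle is the sin-$\theta$ step under a singular $\varSigma$: the classical Davis--Kahan theorem does not apply verbatim, and the right extension --- together with the proof that $\colspana<\hxx>$ really does span the $d$-smallest generalized eigenspace even after perturbation --- is precisely what Sections~\ref{sec:Theory}--\ref{sec:psdgep} are built to supply. A secondary but essential point is matching the exponents of the two summability conditions exactly; this forces one noise term to be handled at moment order $r$ and the other at order $2r$, which is the reason the moment assumption is stated as $2r$-th moments while the probability bounds come out as $r$-th powers of the two ratios.
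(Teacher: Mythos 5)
Your proposal is sound and would prove the stated theorem, but it reaches it by a different route than the paper. The paper never proves Theorem~\ref{thm:Kukush:4b} directly: it establishes the stronger Theorem~\ref{thm-3.2}, which drops the third hypothesis entirely, and the Kukush--Van Huffel statement follows because its hypotheses contain those of Theorem~\ref{thm-3.2}. The technical device that makes this possible is the renormalization by $N=C_0^\top C_0^{}+\lambda_{\min}(A_0^\top A_0)I$: the cross term is bounded through $\trace(C_0 N^{-1} C_0^\top)\le n$ (inequality \eqref{neq-1282}), so its size is controlled by $\lambda_{\min}(A_0^\top A_0)$ alone and $\lambda_{\max}(A_0^\top A_0)$ never enters. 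You instead apply the perturbation bound to the un-rescaled matrices, so the cross term $C_0^\top\widetilde C$ is of size $\lambda_{\max}^{1/2}(A_0^\top A_0)$ and must be paid for with the hypothesis $\sum(\lambda_{\max}(A_0^\top A_0)/\lambda_{\min}^2(A_0^\top A_0))^r<\infty$ --- which is available here, so your argument closes, and it transparently explains why that condition appears in the Kukush--Van Huffel formulation; what it buys is a more direct proof of the weaker statement, while the paper's rescaling buys the removal of that condition. Otherwise the skeleton is the same as the paper's: the reduction to the generalized-eigenspace formulation, the use of the machinery of Sections~\ref{sec:Theory}--\ref{sec:psdgep} (Propositions~\ref{prop:gs6.6} and \ref{prop-6eximl} to know that $\hxx$ minimizes the Rayleigh-type functional, and Lemma~\ref{lem-pertrVV0} in place of a classical Davis--Kahan bound), Rosenthal-type moment bounds at orders $r$ and $2r$, Markov plus Borel--Cantelli, and finally Theorem~\ref{thm-neqsindist} to pass from the sine of the angle to $\|\widehat X-X_0\|$. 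Two small imprecisions: Lemma~\ref{lem-pertrVV0} bounds $\|\sin\angle(\hxx,\xxtrue)\|^2$, not $\|\sin\angle(\hxx,\xxtrue)\|$, by $K\|\mathrm{noise}\|/\lambda_{\min}(A_0^\top A_0)$ (this only changes constants and powers of $\varepsilon$ in the Markov step, not summability); and condition \eqref{cond:G3} does not supply the spectral gap --- it guarantees that $\xxtruetra\varSigma\xxtrue$ is nonsingular, while the gap $\lambda_{d+1}(C_0^\top C_0)\ge\lambda_{\min}(A_0^\top A_0)\to\infty$ comes from Cauchy interlacing, as you in fact use correctly in your decomposition step.
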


In the following consistency theorem the moment condition imposed
on the errors is relaxed.
\begin{thm}[Kukush and Van Huffel \cite{KVH02}, Theorem 5b]\label{thm:Kukush:5b}
Let for some $r$, $1\le r < 2$,
\begin{align*}
\sup_{\substack{i\ge 1\\ j=1{,\ldots,}n+d}} \Me |\delta_{ij}|^{2r}&<\infty,
\\
m^{-1/r} \lambda_{\min} \bigl(A_0^\top
A_0 \bigr) &\to \infty \quad\mbox{as}\quad m\to\infty,
\\
\frac{\lambda_{\min}^2 (A_0^\top A_0)}{\lambda_{\max} (A_0^\top A_0)} &\to \infty \quad\mbox{as}\quad m\to\infty.
\end{align*}
Then $\widehat X\probto \xtrue$ as $m\to\infty$.
\end{thm}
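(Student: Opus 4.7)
The plan is to recast the consistency claim as a perturbation statement for the generalized eigenvalue pencil $\langle C^\top C,\varSigma\rangle$. By the characterization discussed after~\eqref{eqTLSX126} (and developed rigorously in Section~\ref{sec:psdgep}), the columns of $\hxx$ span the generalized invariant subspace of $\langle C^\top C,\varSigma\rangle$ associated with the $d$ smallest finite generalized eigenvalues, while $C_0\xxtrue=0$ places $\colspana<\xxtrue>$ in the kernel of $C_0^\top C_0$; under~\eqref{cond:G3} the pencil $\langle C_0^\top C_0,\varSigma\rangle$ is regular on this subspace. It will then suffice to show that $\langle C^\top C,\varSigma\rangle$ is well approximated, in probability, by $\langle C_0^\top C_0+m\varSigma,\varSigma\rangle$, and to convert the resulting subspace convergence into $\widehat X\probto\xtrue$ via the block-inversion argument sketched after~\eqref{eqTLSX124}.

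\textbf{Decomposition.} Using $C=C_0+\widetilde C$ and $\Me\widetilde C^\top\widetilde C=m\varSigma$, I will split
\[
C^\top C - m\varSigma \;=\; C_0^\top C_0 \;+\; E_1 \;+\; E_2,\quad E_1:=\widetilde C^\top\widetilde C-m\varSigma,\ \ E_2:=C_0^\top\widetilde C+\widetilde C^\top C_0,
\]
and normalize by $\lambda_{\min}(A_0^\top A_0)$. For $E_2$ a direct second-moment estimate yields $\|E_2\|=O_P\bigl(\lambda_{\max}(A_0^\top A_0)^{1/2}\bigr)$, which is $o_P(\lambda_{\min}(A_0^\top A_0))$ by the hypothesis $\lambda_{\min}^2/\lambda_{\max}\to\infty$. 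The term $\lambda_{\min}(A_0^\top A_0)^{-1}C_0^\top C_0$ has kernel containing $\colspana<\xxtrue>$ and minimum nonzero eigenvalue at least one, so the desired spectral gap will be in place.

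\textbf{Heart of the argument.} The decisive input is control of $E_1$ under only a $(2r)$-th moment with $1\le r<2$. Entrywise, $(E_1)_{jk}=\sum_{i=1}^m\bigl(\delta_{ij}\delta_{ik}-\Me\delta_{ij}\delta_{ik}\bigr)$ is a sum of independent centered variables whose $r$-th moments are uniformly bounded in $i$ by Cauchy--Schwarz, via $\Me|\delta_{ij}\delta_{ik}|^r\le(\Me|\delta_{ij}|^{2r})^{1/2}(\Me|\delta_{ik}|^{2r})^{1/2}$. The Marcinkiewicz--Zygmund weak law of large numbers for $1\le r<2$ will then give $m^{-1/r}(E_1)_{jk}\probto 0$, and since $n+d$ is fixed this upgrades to $m^{-1/r}\|E_1\|\probto 0$. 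Combined with the assumption $m^{-1/r}\lambda_{\min}(A_0^\top A_0)\to\infty$, I obtain
\[
\lambda_{\min}(A_0^\top A_0)^{-1}\bigl(C^\top C - m\varSigma - C_0^\top C_0\bigr)\;\probto\;0
\]
in spectral norm.

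\textbf{Conclusion and main obstacle.} A Davis--Kahan-type perturbation theorem for regular Hermitian pencils, or one of the stability lemmas developed in Sections~\ref{sec:Theory}--\ref{sec:psdgep}, will translate this convergence into convergence in probability of the $d$-dimensional generalized invariant subspace of $\langle C^\top C,\varSigma\rangle$ to $\colspana<\xxtrue>$; continuity then ensures that the lower $d\times d$ block of $\hxx$ is eventually nonsingular, so that $\widehat X\probto\xtrue$. The hardest part will be coordinating the Marcinkiewicz--Zygmund rate for $E_1$ with the gap required by the pencil perturbation theorem: the bound on $E_1$ must decay strictly faster than the $(d{+}1)$-th smallest eigenvalue of $C_0^\top C_0/\lambda_{\min}(A_0^\top A_0)$ may shrink, which is precisely what the combined hypotheses $m^{-1/r}\lambda_{\min}\to\infty$ and $\lambda_{\min}^2/\lambda_{\max}\to\infty$ are calibrated to deliver. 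A secondary subtlety is that translating entrywise rates into a spectral-norm rate must not inflate the bound by factors that would undermine this calibration; since $n$ and $d$ are fixed this should only cost absolute constants.
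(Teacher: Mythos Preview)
Your approach is correct and will prove the stated theorem, but it differs from how the paper handles this result. In the paper, Theorem~\ref{thm:Kukush:5b} is not proved directly; it is subsumed by Theorem~\ref{thm-3.1}, whose proof uses the two-sided normalization $N^{-1/2}(\cdot)N^{-1/2}$ with $N=C_0^\top C_0+\lambda_{\min}(A_0^\top A_0)I$. That device forces the normalized ``unperturbed'' matrix $N^{-1/2}C_0^\top C_0 N^{-1/2}$ to have all nonzero eigenvalues in $[\tfrac12,1]$, and---crucially---it makes the cross term $N^{-1/2}C_0^\top\widetilde C\,N^{-1/2}$ vanish in probability using only $\lambda_{\min}(A_0^\top A_0)\to\infty$, with no appeal to $\lambda_{\min}^2/\lambda_{\max}\to\infty$. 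Your simpler scalar normalization by $\lambda_{\min}(A_0^\top A_0)$ is closer to the original Kukush--Van~Huffel argument: it handles $E_1$ exactly as the paper does (Marcinkiewicz--Zygmund, i.e.\ Theorem~\ref{thm-RosenNeq-12}), but it genuinely needs the third hypothesis to kill $E_2$, since $\|E_2\|=O_P(\lambda_{\max}^{1/2})$. So your route is more elementary and matches the theorem as stated; the paper's route is sharper and yields the stronger Theorem~\ref{thm-3.1} where that hypothesis is dropped.

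One small correction to your diagnosis of the ``main obstacle'': after your normalization, the $(d{+}1)$-th eigenvalue of $\lambda_{\min}(A_0^\top A_0)^{-1}C_0^\top C_0$ is bounded \emph{below} by $1$ (by Cauchy interlacing, eq.~\eqref{neq418}), so the spectral gap never shrinks. The condition $\lambda_{\min}^2/\lambda_{\max}\to\infty$ is not there to protect the gap; its sole job is to make $\lambda_{\min}^{-1}\|E_2\|\to 0$. Once you have $\lambda_{\min}^{-1}(E_1+E_2)=o_P(1)$ and a gap $\ge 1$, Lemma~\ref{lem-pertrVV0} applies directly (with $A=\lambda_{\min}^{-1}C_0^\top C_0$, $B=\varSigma$, $\tilde A=\lambda_{\min}^{-1}(E_1+E_2)$), and Theorem~\ref{thm-neqsindist} converts the subspace convergence into $\widehat X\probto X_0$. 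The only points you should still make explicit are that the pencil $\langle C^\top C,\varSigma\rangle$ is definite with high probability (Proposition~\ref{prop-mpdef}) and that $\hxx$ actually minimizes the functional in Lemma~\ref{lem-pertrVV0} (Proposition~\ref{prop-6eximl}); both are routine under the standing assumptions.
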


\label{sec:ourConsThs}
Generalizations of Theorems
\ref{thm:Kukush:4a},  \ref{thm:Kukush:4b}, and \ref{thm:Kukush:5b}
are obtained in \cite{Shklyar2011}.
An essential improvement is achieved.
Namely, it is not required that $\lambda_{\min}^{-2} (A_0^\top
A_0)\lambda_{\max} (A_0^\top A_0)$ converge to $0$.{\sloppy\par}
\begin{thm}[Shklyar \cite{Shklyar2011}, Theorem 4.1, generalization of Theorems \ref{thm:Kukush:4a} and
\ref{thm:Kukush:5b}]
\label{thm-3.1}
Let for some $r$, $1\le r \le 2$,
\begin{align*}
\sup_{\substack{i\ge 1\\ j=1{,\ldots,}n+d}} \Me |\delta_{ij}|^{2r}&<\infty,
\\
m^{-1/r} \lambda_{\min} \bigl(A_0^\top
A_0 \bigr) &\to \infty\quad\mbox{as}\quad m\to\infty.
\end{align*}
Then $\widehat X\probto \xtrue$ as $m\to\infty$.
\end{thm}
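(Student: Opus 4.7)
The plan is to exploit the characterization developed in Section~\ref{sec:psdgep}: the columns of $\hxx$ span the generalized invariant subspace of the matrix pencil $\langle C^\top C,\varSigma\rangle$ corresponding to the $d$ smallest finite generalized eigenvalues. I would compare this pencil with its noiseless counterpart $\langle C_0^\top C_0,\varSigma\rangle$. Since $C_0\xxtrue=0$ by \eqref{eq:C0X0ext0}, the matrix $\xxtrue$ spans a $d$-dimensional generalized eigenspace of the unperturbed pencil at the eigenvalue $0$, and condition \eqref{cond:G3} guarantees that this eigenspace is regular in the sense needed for an eigenspace perturbation bound.

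First, I would upper-bound the $d$-th smallest generalized eigenvalue of $\langle C^\top C,\varSigma\rangle$ by substituting $v=\xxtrue u$, $u\in\mathbb{R}^d$, into the generalized Rayleigh quotient. Because $C_0\xxtrue=0$,
\[
v^\top C^\top C\, v \;=\; u^\top \xxtruetra \widetilde C^\top \widetilde C\, \xxtrue u,
\]
which has expectation $m\, u^\top \xxtruetra \varSigma\, \xxtrue u$ by \eqref{cond:G1}, while condition \eqref{cond:G3} makes $\xxtruetra \varSigma \xxtrue$ positive definite. A Marcinkiewicz--Zygmund inequality applied to the centred sum $\sum_{i=1}^m(u^\top\xxtruetra\tilde c_i)^2 - m\, u^\top\xxtruetra\varSigma\xxtrue u$ under the $2r$-th moment hypothesis controls the deviation by $O_P(m^{1/r})$ uniformly in $u$ on the unit sphere, so the $d$-th smallest generalized eigenvalue of the perturbed pencil is $m+O_P(m^{1/r})$.

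Second, I would lower-bound the $(d{+}1)$-st smallest generalized eigenvalue by an expression of order $\lambda_{\min}(A_0^\top A_0)$. For a test vector $v$ whose projection onto a complement of $\colspana<\xxtrue>$ is nontrivial, the reverse triangle inequality gives $\|Cv\|\ge\|C_0 v\|-\|\widetilde C v\|$; the first term is at least $\sqrt{\lambda_{\min}(A_0^\top A_0)}$ times the norm of the transverse component, while $\|\widetilde C v\|$ contributes only through a Marcinkiewicz--Zygmund bound against $\sqrt{v^\top\varSigma v}$. The main obstacle is the cross term $C_0^\top\widetilde C$: its operator norm scales like $\sqrt{m\,\lambda_{\max}(A_0^\top A_0)}$ and need not be dominated by $\lambda_{\min}(A_0^\top A_0)$ once the ratio hypothesis $\lambda_{\min}^2/\lambda_{\max}\to\infty$ has been dropped. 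The resolution is to treat this cross term only inside the min--max variational characterization of the $(d{+}1)$-st generalized eigenvalue and to absorb it pointwise in $v$ into the $\|C_0 v\|^2$ contribution, rather than via a global operator-norm estimate.

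Combining the upper bound $m+O_P(m^{1/r})$ with the lower bound of order $\lambda_{\min}(A_0^\top A_0)\,(1+o_P(1))$ and using the assumption $m^{-1/r}\lambda_{\min}(A_0^\top A_0)\to\infty$ produces a generalized spectral gap tending to infinity. An eigenspace perturbation bound for regular matrix pencils (as developed in Section~\ref{sec:psdgep}) then yields
\[
\bigl\|\sin\angle\bigl(\colspana<\hxx>,\;\colspana<\xxtrue>\bigr)\bigr\|\probto 0.
\]
Because the lower $d\times d$ block of $\xxtrue$ equals $-I$, the corresponding block of $\hxx$ is invertible with probability tending to one; after the normalisation in \eqref{eqTLSX126} one obtains $\widehat X\probto X_0$.
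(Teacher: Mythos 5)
Your outline identifies the right crux --- the cross term $C_0^\top\widetilde C$, whose operator norm is of order $\sqrt{m\,\lambda_{\max}(A_0^\top A_0)}$ and is no longer dominated by the gap $\lambda_{\min}(A_0^\top A_0)$ once the ratio hypothesis is dropped --- but the step you offer to handle it is not an argument. ``Absorbing it pointwise in $v$ into the $\|C_0v\|^2$ contribution inside the min--max characterization'' cannot work as stated: the Marcinkiewicz--Zygmund bound you invoke holds for each fixed $v$ (or each fixed subspace), while the min--max runs over an uncountable family, so you need a bound that is uniform in $v$, i.e.\ an operator-norm--type bound on the noise--signal interaction; the naive one reintroduces $\sqrt{m\,\lambda_{\max}}$ and hence, in effect, the very condition $\lambda_{\min}^2/\lambda_{\max}\to\infty$ that the theorem is supposed to avoid. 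The same difficulty recurs at your final step: a ``generalized spectral gap tending to infinity'' does not by itself give $\|\sin\angle(\hxx,\xxtrue)\|\probto 0$, because any sin-theta--type perturbation bound requires the perturbation $C^\top C-C_0^\top C_0-m\varSigma$ (which again contains $C_0^\top\widetilde C+\widetilde C^\top C_0$) to be small relative to the gap, and that is exactly what fails in the unnormalized scale. Also, Section~\ref{sec:psdgep} contains no pencil perturbation bound; the relevant tools are Lemmas \ref{lem-pertrv0} and \ref{lem-pertrVV0}, whose applicability to $\hxx$ itself has to be justified via Propositions \ref{prop:gs6.6} and \ref{prop-6eximl}.

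The paper closes precisely this gap by rescaling with $N=C_0^\top C_0^{}+\lambda_{\min}(A_0^\top A_0^{})\,I$ and working with $N^{-1/2}(\,\cdot\,)N^{-1/2}$ throughout. Then the unperturbed matrix $N^{-1/2}C_0^\top C_0^{}N^{-1/2}$ has eigenvalue $0$ of multiplicity $d$ and all other eigenvalues in $[\tfrac12,1]$ (inequality \eqref{neq:NCCNeig}), so the gap is bounded below by a constant, while the cross term satisfies
\begin{equation*}
\Me\big\|N^{-1/2}C_0^\top\widetilde C N^{-1/2}\big\|_F^2
\le \trace\bigl(C_0^{}N^{-1}C_0^\top\bigr)\,\trace\bigl(\varSigma N^{-1}\bigr)
\le \frac{n\,\trace\varSigma}{\lambda_{\min}(A_0^\top A_0^{})}\to 0,
\end{equation*}
because $C_0^{}N^{-1}C_0^\top\le I$ (inequality \eqref{neq-1282}); no $\lambda_{\max}(A_0^\top A_0)$ ever appears. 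The remaining noise block $N^{-1/2}(\widetilde C^\top\widetilde C-m\varSigma)N^{-1/2}$ is handled by Theorem~\ref{thm-RosenNeq-12} exactly as you propose, using $m^{-1/r}\lambda_{\min}(A_0^\top A_0)\to\infty$. Lemma~\ref{lem-pertrVV0}, applied to the rescaled quantities and combined with the monotonicity \eqref{neq:maxsinescal}, then gives $\|\sin\angle(\hxx,\xxtrue)\|\probto 0$, and Theorem~\ref{thm-neqsindist} (a quantitative statement, needing the sine below $(1+\|X_0\|^2)^{-1/2}$) converts this into invertibility of the lower block and $\widehat X\probto\xtrue$. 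Your remaining steps (the $m+O_P(m^{1/r})$ upper bound on the $d$ smallest generalized eigenvalues, the use of \eqref{cond:G3} to make $\xxtruetra\varSigma\xxtrue$ nonsingular, the final normalization) are sound, but without the $N$-renormalization or an equivalent device the central uniform bound is missing, so the proposal does not prove the theorem at the claimed level of generality.
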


\begin{thm}[Shklyar \cite{Shklyar2011}, Theorem 4.2, generalization of Theorem~\ref{thm:Kukush:4b}]
\label{thm-3.2}
Let for some $r\ge 2$ and $m_0\ge 1$,
\begin{align*}
\sup_{\substack{i\ge 1\\ j=1{,\ldots,}n+d}} \Me |\delta_{ij}|^{2r}&<\infty,
\\
\sum_{m=m_0}^\infty \biggl(\frac{\sqrt{m}} {
\lambda_{\min} (A_0^\top A_0)}
\biggr)^r &< \infty.
\end{align*}
Then $\widehat X \to \xtrue$ as $m\to\infty$, almost surely.
\end{thm}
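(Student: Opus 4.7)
The strategy is to upgrade the argument behind Theorem~\ref{thm-3.1} (which applies, with $r=2$, under the present hypotheses and already yields convergence in probability) to almost sure convergence. The upgrade is driven by the $r$-th moment bound
\[
\Me \bigl\| \widehat X - X_0 \bigr\|^r \;\le\; \const \cdot \biggl( \frac{\sqrt m}{\lambda_{\min}(A_0^\top A_0)} \biggr)^r
\]
(valid on a high-probability event), which, combined with Markov's inequality and the summability condition in the hypothesis, delivers the Borel--Cantelli conclusion.

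First I would use the characterization from Section~\ref{sec:psdgep} to represent $\hxx$ as a matrix whose columns span the generalized invariant subspace of the pencil $\langle C^\top C,\varSigma\rangle$ associated with the $d$ smallest generalized eigenvalues. For the ``population'' pencil $\langle C_0^\top C_0 + m\varSigma,\,\varSigma\rangle$ the subspace $\colspana<\xxtrue>$ is a $d$-dimensional invariant subspace with generalized eigenvalue exactly $m$ (because $C_0\xxtrue = 0$), whereas each $\varSigma$-orthogonal complementary direction gives a generalized eigenvalue exceeding $m + c\,\lambda_{\min}(A_0^\top A_0)$ for a constant $c>0$ depending only on $X_0$ and $\varSigma$ through condition~\eqref{cond:G3}. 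A Davis--Kahan-type $\sin\Theta$ estimate for the generalized invariant subspace then bounds the subspace distance by the ratio of the relevant block of the perturbation
\[
N \;=\; C^\top C - C_0^\top C_0 - m\varSigma \;=\; C_0^\top \widetilde C + \widetilde C^\top C_0 + \bigl(\widetilde C^\top \widetilde C - m\varSigma\bigr)
\]
to the spectral gap $\lambda_{\min}(A_0^\top A_0)$. Once the subspace angle is small, the lower $d\times d$ block of $\hxx$ is nonsingular and the renormalization yielding $\widehat X$ produces $\|\widehat X - X_0\|\le \const\cdot\|\sin\angle(\colspana<\hxx>,\colspana<\xxtrue>)\|$.

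The entries of the relevant block of $N$ are sums of independent mean-zero random variables with uniformly bounded $2r$-th moments, so a Rosenthal inequality applied to $\sum_i (A_0)_{i,j}\delta_{i,k}$ and to $\sum_i (\delta_{ij}\delta_{ik} - \varSigma_{jk})$ gives, respectively, $r$-th-moment contributions of orders $\lambda_{\max}(A_0^\top A_0)^{r/2}$ and $m^{r/2}$. The nontrivial point is that, after the block projection to the $\varSigma$-orthogonal complement of $\colspana<\xxtrue>$, only the tangential action survives, and using $C_0\xxtrue = 0$ the factor $\sqrt{\lambda_{\max}(A_0^\top A_0)}$ cancels against the $\lambda_{\min}(A_0^\top A_0)$ that will appear in the denominator of the $\sin\Theta$ bound. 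This yields the displayed $r$-th moment bound; Markov's inequality plus Borel--Cantelli then gives $\widehat X \to X_0$ almost surely, while the event that the lower $d\times d$ block of $\hxx$ is singular is contained in the same low-probability tail event and is therefore ruled out infinitely often by the same argument.

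\emph{The main obstacle} is to make rigorous the cancellation just described, i.e.\ to show that the numerator of the $\sin\Theta$ bound is of order $\sqrt m$ rather than the naïve $\sqrt m\,\sqrt{\lambda_{\max}(A_0^\top A_0)}$ produced by $\|N\|$. Without this improvement the argument reproduces only Theorem~\ref{thm:Kukush:4b} and forces the extra hypothesis on $\lambda_{\max}/\lambda_{\min}^2$. Executing it requires a careful block decomposition of the perturbed pencil in a basis adapted to $\colspana<\xxtrue>$ and its $\varSigma$-orthogonal complement, together with the observation that $\xxtruetra N \xxtrue = \xxtruetra(\widetilde C^\top\widetilde C - m\varSigma)\xxtrue$ is a centered quadratic object of size $\sqrt m$ only. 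Once this block structure is in place, the Rosenthal bounds combine cleanly into the desired $r$-th-moment estimate, and the almost sure convergence follows by Borel--Cantelli as indicated above.
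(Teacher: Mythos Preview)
Your overall architecture---perturbation bound for the invariant subspace, Rosenthal moment estimates, then Borel--Cantelli---matches the paper's, and you have correctly located the crux: a naive Davis--Kahan bound picks up a numerator of order $\sqrt{\lambda_{\max}(A_0^\top A_0)}$ from the cross term $C_0^\top\widetilde C$, which would force the extra Kukush--Van Huffel hypothesis. But the resolution you sketch does not actually close this gap. Passing to blocks adapted to $\colspana<\xxtrue>$ and its ($\varSigma$-)orthogonal complement kills $\widetilde C^\top C_0\,\xxtrue$ (since $C_0\xxtrue=0$), yet the other half of the off-diagonal block, namely $P^\perp C_0^\top\widetilde C\,\xxtrue$, survives intact; its $r$-th moment via Rosenthal is of order $\bigl(\sum_i\|c_i^0\|^2\bigr)^{r/2}=(\trace C_0^\top C_0)^{r/2}\ge\lambda_{\max}(A_0^\top A_0)^{r/2}$, exactly the factor you need to eliminate. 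Your observation that the \emph{diagonal} block $\xxtruetra(\widetilde C^\top\widetilde C-m\varSigma)\xxtrue$ is only of size $\sqrt m$ is correct but does not help---the $\sin\Theta$ bound is driven by the off-diagonal block, not the diagonal one. As written, your argument reproduces Theorem~\ref{thm:Kukush:4b}, not Theorem~\ref{thm-3.2}.

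The paper's device is different and is the missing idea: pre- and post-multiply everything by $N^{-1/2}$ with $N:=C_0^\top C_0+\lambda_{\min}(A_0^\top A_0)\,I$ (beware: this is not your $N$). After this rescaling the unperturbed matrix $N^{-1/2}C_0^\top C_0 N^{-1/2}$ has eigenvalue $0$ of multiplicity $d$ and all remaining eigenvalues in $[\tfrac12,1]$, so the spectral gap is an absolute constant and $\lambda_{\max}$ never enters the perturbation lemma. The cross term becomes $M_1=N^{-1/2}C_0^\top\widetilde C N^{-1/2}=\sum_i N^{-1/2}(c_i^0)^\top\tilde c_i N^{-1/2}$, and the key identity is
\[
\sum_i\bigl\|N^{-1/2}(c_i^0)^\top\bigr\|^2=\trace\bigl(C_0 N^{-1}C_0^\top\bigr)\le n,
\]
which is \emph{bounded independently of $m$}; Rosenthal then yields $\Me\|M_1\|^{2r}=O\bigl(\lambda_{\min}(A_0^\top A_0)^{-r}\bigr)$ with no $\lambda_{\max}$ contamination. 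The quadratic piece $M_2=N^{-1/2}(\widetilde C^\top\widetilde C-m\varSigma)N^{-1/2}$ is handled by $\|M_2\|\le\lambda_{\min}^{-1}\bigl\|\sum_i(\tilde c_i^\top\tilde c_i-\varSigma)\bigr\|$ and Rosenthal, giving $\Me\|M_2\|^r=O\bigl(m^{r/2}\lambda_{\min}^{-r}\bigr)$. Summability plus Borel--Cantelli gives $\epsilon:=\|M_1+M_1^\top+M_2\|\to 0$ a.s., and the deterministic inequality $\|\sin\angle(\hxx,\xxtrue)\|^2\le\const\cdot\epsilon$ (Lemma~\ref{lem-pertrVV0} applied to the rescaled pencil, combined with~\eqref{neq:maxsinescal}) finishes. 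In short, the ``cancellation'' you are looking for is not achieved by a block decomposition of the perturbation but by a global rescaling of the pencil that flattens $C_0^\top C_0$ to a matrix of condition number at most~$2$.
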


In the next theorem strong consistency is obtained for $r<2$.
\begin{thm}[Shklyar \cite{Shklyar2011}, Theorem 4.3]
\label{thm-3.3}
Let for some $r$ ($1 \le r \le 2$) and $m_0\ge 1$,
\begin{gather*}
\sup_{\substack{i\ge 1\\ j=1{,\ldots,}n+d}} \Me |\delta_{ij}|^{2r}<\infty,
\qquad \sum_{m=m_0}^\infty \frac{1}{\lambda_{\min}^r (A_0^\top A_0)}<
\infty.
\end{gather*}
Then
$\widehat X \to \xtrue$ as $m\to\infty$, almost surely.
\end{thm}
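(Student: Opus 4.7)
The plan is to reduce strong consistency to an almost-sure perturbation bound for a generalized eigenvalue problem, and then to apply a Kolmogorov-type convergence criterion under the summability hypothesis on $\lambda_{\min}^{-r}(A_0^\top A_0)$. First I would invoke the characterization (to be developed in Sections~\ref{sect:eue}--\ref{sec:psdgep}) that $\colspana<\hxx>$ is the generalized invariant subspace of the pencil $\langle C^\top C,\varSigma\rangle$ for the $d$ smallest generalized eigenvalues, while the true subspace $\colspana<\xxtrue>$ is the zero-eigenvalue invariant subspace of the pencil $\langle C_0^\top C_0,\varSigma\rangle$ (by \eqref{eq:C0X0ext0} together with \eqref{cond:G3}). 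A perturbation lemma for such subspaces would show that if the symmetric noise
\[
E_m := C^\top C - m\,\varSigma - C_0^\top C_0
\]
satisfies $\|E_m\|/\lambda_{\min}(A_0^\top A_0) \to 0$ almost surely, then $\colspana<\hxx>\to\colspana<\xxtrue>$ almost surely; the same bound guarantees that the lower $d\times d$ block of $\hxx$ is nonsingular eventually, so $\widehat X$ is well defined and $\widehat X\to\xtrue$ almost surely.

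Next I would decompose
\[
E_m = C_0^\top \widetilde C + \widetilde C^\top C_0 + \bigl(\widetilde C^\top \widetilde C - m\,\varSigma\bigr),
\]
and bound each term. An entry of the cross-term has the form $\sum_{i=1}^m c_{0,ik}\delta_{il}$, a sum of independent centered random variables whose $2r$-th moments are controlled by $\const\cdot(\sum_i c_{0,ik}^2)^r$ via a Rosenthal/Marcinkiewicz--Zygmund inequality together with $\sup_{i,j}\Me|\delta_{ij}|^{2r}<\infty$; an entry of the centered quadratic term is $\sum_{i=1}^m(\delta_{ij}\delta_{ik}-\sigma_{jk})$, a sum of independent centered random variables with uniformly bounded $r$-th moments (by Cauchy--Schwarz applied to the $2r$-th moment hypothesis).

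To upgrade these moment estimates to almost-sure convergence at the required rate, I would use the classical Kolmogorov--Chung SLLN: for centered independent $\xi_i$ with $\sup_i \Me|\xi_i|^r<\infty$ ($1\le r\le 2$) and nondecreasing $b_m\uparrow\infty$ with $\sum b_m^{-r}<\infty$, the series $\sum \xi_i/b_i$ converges almost surely, so Kronecker's lemma gives $\sum_{i=1}^m\xi_i/b_m\to 0$ almost surely. Applied to each entry of the quadratic term with $b_m=\lambda_{\min}(A_0^\top A_0)$, this directly handles that contribution; since all matrices are of fixed size $(n+d)\times(n+d)$, entrywise convergence yields operator-norm convergence.

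The delicate step is the cross term, whose entry variances grow like $(A_0^\top A_0)_{kk}=O(\lambda_{\max}(A_0^\top A_0))$ and so are a priori incomparable with $\lambda_{\min}(A_0^\top A_0)$; this is exactly where the older proofs (Theorems~\ref{thm:Kukush:4a}--\ref{thm:Kukush:4b}) required $\lambda_{\min}^2/\lambda_{\max}\to\infty$. The resolution I would pursue exploits the identity $C_0\xxtrue=0$, which annihilates one side of the cross term: the perturbation bound ultimately only needs to control $E_m$ along and transverse to $\colspana<\xxtrue>$ separately. On $\colspana<\xxtrue>$ the cross-term contribution vanishes and $E_m\xxtrue$ reduces to the quadratic remainder already handled; on the transverse directions a perturbation of order $o(\lambda_{\min}(A_0^\top A_0))$ suffices, which is again delivered by the Kolmogorov--Chung/Kronecker argument under $\sum \lambda_{\min}^{-r}(A_0^\top A_0)<\infty$. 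Completing this decoupling cleanly, presumably via the generalized-eigenvalue perturbation machinery of Sections~\ref{sec:Theory}--\ref{sec:psdgep}, is the principal obstacle.
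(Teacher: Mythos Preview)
Your overall architecture---reduce to a perturbation of the pencil $\langle C_0^\top C_0,\varSigma\rangle$, then prove a.s.\ smallness of the normalized error via a Kolmogorov-type SLLN---is exactly the paper's strategy, and your treatment of the quadratic term $\widetilde C^\top\widetilde C-m\varSigma$ (Kolmogorov--Chung plus Kronecker with $b_m=\lambda_{\min}(A_0^\top A_0)$) is precisely what the paper does.

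The gap is in the cross term. Your claim that ``on $\colspana<\xxtrue>$ the cross-term contribution vanishes and $E_m\xxtrue$ reduces to the quadratic remainder'' is not correct: $C_0\xxtrue=0$ kills $\widetilde C^\top C_0\,\xxtrue$, but $C_0^\top\widetilde C\,\xxtrue$ survives, and this lives in $\colspana<C_0^\top>\subset\colspana<\xxtrue>^\perp$---exactly the off-diagonal block a Davis--Kahan-type bound must control. Your entrywise estimate then gives $\Me|(C_0^\top\widetilde C)_{kl}|^{2r}\le\const\cdot(A_0^\top A_0)_{kk}^{\,r}$, which can be of order $\lambda_{\max}^r$, so the decoupling as stated does not escape the old $\lambda_{\min}^2/\lambda_{\max}\to\infty$ condition.

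The device you are missing is the paper's two-sided renormalization by $N^{-1/2}$ with $N=C_0^\top C_0+\lambda_{\min}(A_0^\top A_0)I$. One applies the perturbation lemma not to $C_0^\top C_0$ and $E_m$ but to $N^{-1/2}C_0^\top C_0 N^{-1/2}$ (whose nonzero eigenvalues lie in $[\tfrac12,1]$) and $N^{-1/2}E_m N^{-1/2}$. The cross piece becomes $M_1=N^{-1/2}C_0^\top\widetilde C\,N^{-1/2}$, and now the crucial dimensionless bound $\sum_i\|N^{-1/2}c^0_i\|^2=\trace(C_0 N^{-1}C_0^\top)\le n$ replaces the uncontrolled $\lambda_{\max}$. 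Rosenthal then yields $\Me\|M_1\|^{2r}=O(\lambda_{\min}^{-r})$, and the summability hypothesis plus Borel--Cantelli gives $M_1\to0$ a.s. This normalization is the concrete implementation of the ``decoupling'' you were reaching for: it automatically down-weights the transverse directions by the right amount.
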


The key point of the proof is the application
of our own theorem on perturbation bounds for generalized eigenvectors
(Theorems \ref{lem-pertrv0} and \ref{lem-pertrVV0},
see also \cite{Shklyar2011}).
The conditions were relaxed by renormalization of the data.

\section{Existence and uniqueness of the estimator}\label{sect:eue}
When we speak of sequence $\{A_m,\; m\ge 1\}$ of random events
parametrized by sample size $m$, we say
that a random event occurs \textit{with high probability\/}
if the probability of the event tends to 1 as $m\to\infty$,
and we say that a random event occurs \textit{eventually}
if almost surely there exists $m_0$ such that
the random event occurs whenever $m > m_0$,
that is $\Prob(\liminf\limits_{m\to\infty} A_m) = 1$.
(In this definition, $A_m$ are random events.
Elsewhere in this paper, $A_m$ are matrices.)

\begin{thm}\label{thm:wdue}
Under the conditions of Theorem~\ref{thm-3.1},
the following three events occur with high probability;
under the conditions of Theorem \ref{thm-3.2} or~\ref{thm-3.3},
the following relations occur eventually.
\begin{enumerate}
\item\label{wdue1}
The constrained minimum in \eqref{eqTLS118} is attained.
If $\Delta$ satisfies the constraints in \eqref{eqTLS118}
$($particularly, if matrix $\Delta$ is a solution
to optimization problem \eqref{eqTLS118}$)$, then
the linear equation \eqref{eqTLSX124} has a solution $\hxx$
that is a full-rank matrix.
\item\label{wdue15}
The optimization problem \eqref{eqTLS118} has a unique solution $\Delta$.
\item\label{wdue2}
For any $\Delta$ that is a solution to \eqref{eqTLS118},
equation \eqref{eqTLSX126}
(which is a linear equation in $\widehat X$)
has a unique solution.
\end{enumerate}
\end{thm}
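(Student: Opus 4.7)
The strategy is to reduce all three assertions to a single spectral-gap statement for the pencil $\langle C^\top C, \varSigma\rangle$, and then to secure that gap through the decomposition $C = C_0 + \widetilde C$ combined with moment-based concentration inequalities on $\widetilde C$ under the hypotheses of Theorems~\ref{thm-3.1}--\ref{thm-3.3}.

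First, concerning existence in part~\ref{wdue1}: the discussion immediately preceding \eqref{eqTLSX124} already shows that the constraint set in \eqref{eqTLS118} is closed, that it is non-empty under \eqref{cond:G3} by Proposition~\ref{prop:gep6.7}, and that the functional $\|\Delta\,\pinvp(\varSigma^{1/2})\|_F$ is a genuine norm on the subspace $\{\Delta:\Delta(I-P_\varSigma)=0\}$, so a minimizer exists. Given any such minimizer $\Delta$, the rank constraint $\rank(C-\Delta)\le n$ forces $\dim\Kernel(C-\Delta)\ge d$, so one can select $d$ linearly independent vectors from this kernel and bind them into a full-rank $(n{+}d)\times d$ matrix $\hxx$ satisfying \eqref{eqTLSX124}. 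This step requires no probabilistic input.

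The main step is to prove, with high probability under Theorem~\ref{thm-3.1} or eventually under Theorem~\ref{thm-3.2} or~\ref{thm-3.3}, a strict inequality $\mu_d < \mu_{d+1}$ between the $d$-th and $(d{+}1)$-th smallest finite generalized eigenvalues of $\langle C^\top C, \varSigma\rangle$. The heuristic is transparent: by \eqref{eq:C0X0ext0}, $C_0 \xxtrue = 0$, so on $\colspana<\xxtrue>$ one has $v^\top C^\top C v = v^\top \widetilde C^\top \widetilde C v$, which concentrates near its mean $m\,v^\top\varSigma v$; on a $\varSigma$-conjugate complement of $\colspana<\xxtrue>$ the form $v^\top C_0^\top C_0 v$ dominates, with growth rate at least $\lambda_{\min}(A_0^\top A_0)$. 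After renormalizing the data in the manner indicated at the end of Section~\ref{sec:ourConsThs}, the bound $\sup\Me|\delta_{ij}|^{2r}<\infty$ yields the required concentration --- via Chebyshev-type estimates in the $L^r$-setting of Theorem~\ref{thm-3.1}, and via a Borel--Cantelli argument based on the summability hypotheses of Theorems~\ref{thm-3.2} and~\ref{thm-3.3}. Reconciling an $O(m)$ noise level with a signal level that may grow only like $m^{1/r}$ when $r>1$ is, I expect, the principal obstacle; the renormalization trick is precisely what allows this reconciliation to be quantitative enough to produce a strict gap.

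Once the gap is secured, parts~\ref{wdue15} and~\ref{wdue2} follow from the machinery of Section~\ref{sec:psdgep}. By the TLS--GEP correspondence established there, minimizers $\Delta$ of \eqref{eqTLS118} are in bijection with $d$-dimensional generalized invariant subspaces of $\langle C^\top C, \varSigma\rangle$ for the $d$ smallest generalized eigenvalues; a strict gap makes this subspace unique, whence $\Delta$ is unique. For part~\ref{wdue2}, any full-rank $\hxx$ satisfying \eqref{eqTLSX124} has column space equal to this unique invariant subspace; the perturbation bounds for generalized eigenvectors (Theorems~\ref{lem-pertrv0} and~\ref{lem-pertrVV0}) then yield that $\colspana<\hxx>$ is close in canonical-angle sense to $\colspana<\xxtrue>$, which is transversal to the span of the first $n$ standard basis vectors. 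Consequently the lower $d\times d$ block of $\hxx$ is nonsingular, and \eqref{eqTLSX126} uniquely determines $\widehat X$.
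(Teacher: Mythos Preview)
Your overall strategy---reduce everything to a spectral gap $\nu_d<\nu_{d+1}$ for the pencil $\langle C^\top C,\varSigma\rangle$, then invoke Proposition~\ref{prop:gep6.7} for uniqueness of $\colspana<\hxx>$ and Remark~\ref{rem:6.2-1} for uniqueness of $\Delta$---is sound, and Part~\ref{wdue1} is handled exactly as the paper does. The difference is in how the gap is obtained. You propose to prove it \emph{directly} from concentration of $C^\top C$ around $C_0^\top C_0+m\varSigma$; the paper instead proves it \emph{indirectly}, as a by-product of the eigenvector perturbation bound.

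The direct route is where your sketch is thin, and I do not think it goes through as written. To lower-bound $\nu_{d+1}$ via Lemma~\ref{lem:psdgsvd1} you must show that \emph{every} $(d{+}1)$-dimensional subspace contains a vector on which $C^\top C-\lambda\varSigma$ is positive for $\lambda$ just above $\nu_d$. A Weyl-type eigenvalue perturbation bound for the renormalized pencil would require dividing $\epsilon$ by something like $\lambda_{\min}(N^{-1/2}\varSigma N^{-1/2})$, which is either zero (singular $\varSigma$) or behaves like $\lambda_{\min}(\varSigma)/\lambda_{\max}(N)$ and is not controlled by the hypotheses of Theorems~\ref{thm-3.1}--\ref{thm-3.3}; this is precisely the $\lambda_{\max}/\lambda_{\min}^2$ obstruction the paper set out to remove. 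The paper sidesteps this by running the logic in the opposite order: it applies the eigen\emph{vector} bound (Lemma~\ref{lem-pertrVV0}, whose constant involves only $\|B\|\,\lambda_{\max}((X_0^\top B X_0)^{-1}X_0^\top X_0)$, which is $m$-independent) to a measurably selected worst-case minimizer $\hxxsv$, obtaining \eqref{eq:wXnear}. Then it argues by contradiction: if $\nu_d=\nu_{d+1}$, Proposition~\ref{prop:gep6.7} together with Lemma~\ref{lem:subsset1112132} would produce an admissible $\hxx$ with $\|\sin\angle(\hxx,\xxtrue)\|=1$, contradicting \eqref{eq:wXnear}. Uniqueness of $\widehat X$ and then of $\Delta$ follow from this, via Theorem~\ref{thm-neqsindist} and Remark~\ref{rem:6.2-1}, in the same way you outline. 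So your use of Lemmas~\ref{lem-pertrv0}/\ref{lem-pertrVV0} at the end is right---but that step is actually the \emph{input} to the gap, not a consequence of it.
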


\begin{thm}\label{thm:spnv}\mbox{}
\begin{enumerate}
\item\label{thm-spnv:part1}
The constrained minimum in \eqref{eqTLS220} is attained.
If $\Delta$ satisfies the constraints in \eqref{eqTLS220}, then
the linear equation \eqref{eqTLSX124} has a solution $\hxx$
that is a full-rank matrix.

\item\label{thm-spnv:part2}
Under the conditions of Theorem~\ref{thm-3.1},
the following random event occurs with high probability:
for any $\Delta$ that is a solution to \eqref{eqTLS220},
equation \eqref{eqTLSX126}
has a solution $\widehat X$.
(Equation~\eqref{eqTLSX126} might have multiple solutions.)
The solution is a consistent estimator of $X_0$,
i.e., $\widehat X \to X_0$ in probability.

\item\label{thm-spnv:part3}
Under the conditions of Theorem \ref{thm-3.2} or~\ref{thm-3.3},
the following random event occurs eventually:
for any $\Delta$ that is a solution to \eqref{eqTLS220},
equation \eqref{eqTLSX126}
has a solution $\widehat X$.
The solution is a strongly consistent estimator of $X_0$,
i.e., $\widehat X \to X_0$ almost surely.
\end{enumerate}
\end{thm}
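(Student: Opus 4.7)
For Part~\ref{thm-spnv:part1}, existence of a spectral minimum can be inherited from the Frobenius problem. The text has already shown that the constraint set (which is the same in \eqref{eqTLS118} and \eqref{eqTLS220}) is closed; under \eqref{cond:G3} it is non-empty by Proposition~\ref{prop:gep6.7}; and the Frobenius functional in \eqref{eqTLS118} attains its minimum there. The implication chain stated after \eqref{eqTLS118uin} -- that every Frobenius minimizer is also a spectral minimizer -- then gives attainment in \eqref{eqTLS220}. Full-rankness of $\hxx$ is immediate from the rank constraint $\rank(C-\Delta)\le n$, which forces the right null space of $C-\Delta$ to have dimension $\ge d$; choosing $d$ linearly independent vectors from it as columns of $\hxx$ produces a full-rank solution of \eqref{eqTLSX124}, exactly the construction used in the paragraph following that equation in the text.

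For Parts~\ref{thm-spnv:part2} and~\ref{thm-spnv:part3}, the plan begins by exploiting the fact that $\widetilde C$ itself is always feasible in \eqref{eqTLS220}: by \eqref{eq:C0X0ext0}, $(C-\widetilde C)\,\xxtrue = C_0\,\xxtrue = 0$ and $\rank(C-\widetilde C)=\rank C_0\le n$. Hence every spectral minimizer $\Delta$ satisfies the uniform bound
\[
\lambda_{\max}\bigl(\Delta\,\pinvb(\varSigma)\,\Delta^{\top}\bigr) \;\le\; \lambda_{\max}\bigl(\widetilde C\,\pinvb(\varSigma)\,\widetilde C^{\top}\bigr).
\]
The maximal-type inequalities already used to prove Theorems~\ref{thm-3.1}--\ref{thm-3.3} control the right-hand side and, under the respective moment/growth assumptions, yield
\[
\lambda_{\min}^{-1}\bigl(A_0^\top A_0^{}\bigr)\,\lambda_{\max}\bigl(\Delta\,\pinvb(\varSigma)\,\Delta^{\top}\bigr) \longrightarrow 0
\]
in probability (under Theorem~\ref{thm-3.1}) or almost surely (under Theorems~\ref{thm-3.2}, \ref{thm-3.3}). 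Next, rewriting $(C-\Delta)\hxx = 0$ with $C=C_0+\widetilde C$ and left-multiplying by $\hxx^{\top}$ produces a perturbed generalized eigenvalue relation for the pencil $\langle C_0^\top C_0,\varSigma\rangle$ with perturbation magnitude governed by the spectral quantity just controlled. Applying Theorems~\ref{lem-pertrv0} and~\ref{lem-pertrVV0} then gives $\|\sin\angle(\colspana<\hxx>,\,\colspana<\xxtrue>)\|\to 0$, and since $\xxtrue=\binom{X_0}{-I}$ has a nonsingular lower block, this subspace closeness makes the normalization $\binom{\widehat X}{-I}$ realizable inside $\Kernel(C-\Delta)$ -- giving solvability of \eqref{eqTLSX126} -- and forces any such $\widehat X$ to converge to $X_0$ in the appropriate mode.

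The main obstacle is the non-uniqueness of spectral minimizers: Theorem~\ref{thm:wdue} only pins down the Frobenius minimizer, whereas here the conclusion must hold for \emph{every} solution of \eqref{eqTLS220}. This is overcome uniformly because the upper bound from feasibility of $\widetilde C$ does not single out any minimizer, and Theorems~\ref{lem-pertrv0} and~\ref{lem-pertrVV0} are phrased in the spectral quantity $\lambda_{\max}(\Delta\,\pinvb(\varSigma)\,\Delta^{\top})$ rather than in the Frobenius norm; thus the same perturbation estimate that drives the Frobenius consistency proof goes through verbatim for every spectral minimizer. A secondary nuisance is that $\Kernel(C-\Delta)$ may exceed dimension $d$, so \eqref{eqTLSX126} can admit a whole family of $\widehat X$; but the subspace-angle bound squeezes every $d$-dimensional subspace of $\Kernel(C-\Delta)$ that contains a vector of the form $\binom{X}{-I}$ close to $\colspana<\xxtrue>$, so every solution $\widehat X$ is consistent, as claimed.
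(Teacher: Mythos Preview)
Your Part~\ref{thm-spnv:part1} argument is fine and matches the paper's: attainment in \eqref{eqTLS220} is inherited from \eqref{eqTLS118} via Proposition~\ref{prop:gs6.6}, and the full-rank $\hxx$ comes from the shared constraint set.

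Your Parts~\ref{thm-spnv:part2}--\ref{thm-spnv:part3} contain a genuine gap. The displayed convergence
\[
\lambda_{\min}^{-1}\bigl(A_0^\top A_0\bigr)\,\lambda_{\max}\bigl(\Delta\,\pinvb(\varSigma)\,\Delta^{\top}\bigr)\longrightarrow 0
\]
is \emph{false} in general. For a spectral minimizer $\Delta$, the Corollary after Proposition~\ref{prop:6.5} gives $\lambda_{\max}(\Delta\,\pinvb(\varSigma)\,\Delta^\top)=\nu_d$, and your feasibility bound via $\widetilde C$ gives $\nu_d\le\lambda_{\max}(\widetilde C\,\pinvb(\varSigma)\,\widetilde C^\top)$. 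But $\widetilde C^\top\widetilde C\approx m\varSigma$, so $\lambda_{\max}(\widetilde C\,\pinvb(\varSigma)\,\widetilde C^\top)$ is of order $m$; likewise $\nu_d$ itself is of order $m$ (take $X=\xxtrue$ in \eqref{eqf-l816} and use $C_0\xxtrue=0$). Under, say, Theorem~\ref{thm-3.1} with $r=2$ one only has $\lambda_{\min}(A_0^\top A_0)/\sqrt{m}\to\infty$, which does not force $m/\lambda_{\min}(A_0^\top A_0)\to 0$. So the perturbation you feed into Lemmas~\ref{lem-pertrv0}--\ref{lem-pertrVV0} does not vanish.

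What you are missing is the \emph{centering by $m\varSigma$}, and this is exactly where Proposition~\ref{prop-6eximl} enters in the paper's argument. That proposition says that any $\hxx$ coming from a spectral minimizer $\Delta$ is a genuine minimizer of the functional \eqref{eqf-l816mm}, i.e.\ of $X\mapsto\lambda_{\max}\bigl((X^\top\varSigma X)^{-1}X^\top(C^\top C - m\varSigma)X\bigr)$. After the rescaling by $N^{-1/2}$ this is precisely the functional $f$ in Lemma~\ref{lem-pertrVV0}, with unperturbed matrix $N^{-1/2}C_0^\top C_0 N^{-1/2}$ and perturbation $\epsilon=\|N^{-1/2}(C^\top C - C_0^\top C_0 - m\varSigma)N^{-1/2}\|$; it is $\epsilon\to 0$ (not your uncentered quantity) that the moment estimates of Section~\ref{ssapx-pr3} establish. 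Your attempt to extract a ``perturbed generalized eigenvalue relation'' directly from $(C-\Delta)\hxx=0$ does not produce a minimizer of any functional to which Lemmas~\ref{lem-pertrv0}--\ref{lem-pertrVV0} apply, and the raw bound on $\|\Delta-\widetilde C\|$ is too coarse by a factor of $\sqrt{m}$. The paper's proof therefore runs the Frobenius consistency argument verbatim---the only use of Proposition~\ref{prop:gs6.6} there was to pass from a Frobenius minimizer to a spectral one before invoking Proposition~\ref{prop-6eximl}, and for \eqref{eqTLS220} that step is simply omitted.
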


\begin{remforp}\label{rem:spnvuni}
Theorem \ref{thm:spnv} can be generalized in
the following way:
all references to \eqref{eqTLS220}
can be changed into references to \eqref{eqTLS118uin}.
Thus, if Frobenius norm in the definition of the estimator
is changed to any unitarily invariant norm,
the consistency results are still valid.
\end{remforp}

\section{Sketch of the proof of Theorems \ref{thm-3.1}--\ref{thm-3.3}}
\label{sec:sketch}
Denote
\[
N = C_0^\top C_0^{} +
\lambda_{\rm min} \bigl(A_0^\top A_0^{}
\bigr) I .
\]
Under the conditions of any of the consistency
theorems in Section~\ref{sec:knownconsth}
there is a convergence
$\lambda_{\min} (A_0^\top A_0^{}) \to \infty$. Hence
the matrix $N$ is nonsingular for $m$ large enough.
The matrix $N$ is used as the denominator
in the law of large numbers.
Also, it is used for rescaling the problem:
\querymark{Q5}the condition number of $N^{-1/2} C_0^\top C_0^{} N^{-1/2}$
equals $2$ at most.

The proofs of consistency theorems differ one from another,
but they have the same structure
and common parts.
First, the law of large numbers
\begin{equation}
\label{eq:spstr1} N^{-1/2} \bigl(C^\top C - C_0^\top
C_0^{} - m \varSigma \bigr) N^{-1/2} =
N^{-1/2} \sum_{i=1}^m
\bigl(c_i^\top c_i^{} -
\bigl(c_i^{0} \bigr)^\top c_i^{0}
- \varSigma \bigr) N^{-1/2} \to 0
\end{equation}
holds either in probability or almost surely,
which depends on the theorem being proved.
The proof varies for different theorems.

The inequalities \eqref{neq:maxsinbouni2} and \eqref{neq:maxsinbomul2}
imply that whenever convergence \eqref{eq:spstr1} occurs,
the sine between vectors $\hxx$ and $\xxtrue$
(in the univariate regression)
or the largest of sines of canonical values between column
spans of matrices $\hxx$ and $\xxtrue$ tends to $0$
as the sample size $m$ increases:
\begin{equation}
\label{eq:spstr2} \big\|\sin\angle(\hxx, \xxtrue)\big\| \le \big\|\sin\angle
\bigl(N^{1/2}\hxx, N^{1/2}\xxtrue \bigr)\big\| \to 0.
\end{equation}
To prove \eqref{eq:spstr2}, we use some algebra, the fact that
$\xxtrue$ (in the univariate model) or
the columns of $\xxtrue$ (in the multivariate model) are the minimum-eigenvalue
eigenvectors of matrix $N$
(see ineq.~\eqref{neq:sinescal}),
and eigenvector perturbation theorems --
Lemma~\ref{lem-pertrv0} or Lemma~\ref{lem-pertrVV0}.

Then, by Theorem~\ref{thm-neqsindist} 
we conclude that
\begin{equation}
\label{eq:spstr3} \|\widehat X - X_0\| \to 0.
\end{equation}

\section{Relevant classical results}\label{sec:Theory}
We use some classical results.
However, we state them in a form convenient for our study
and provide the proof for some of them.
\subsection{Generalized eigenvectors and eigenvalues}
In this paper we deal with real matrices.
Most theorems in this section
can be generalized for matrices with complex entries
by 
requiring that matrices be Hermitian rather than symmetric,
and by complex conjugating where it is necessary.
\begin{thm}[Simultaneous diagonalization of a definite matrix pair]
\label{thm-gedpdp}
Let $A$ and $B$ be $n \times n$ symmetric matrices such that
for some $\alpha$ and $\beta$ the matrix $\alpha A+\beta B$
is positive definite. Then there exist a nonsingular matrix $T$
and diagonal matrices $\varLambda$ and ${\rm M}$ such that
\[
A = \bigl(T^{-1} \bigr)^\top \varLambda T^{-1},
\qquad B= \bigl(T^{-1} \bigr)^\top {\rm M} T^{-1}.
\]
\end{thm}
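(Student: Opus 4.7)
The plan is to reduce the problem to the ordinary spectral theorem for a single real symmetric matrix, using the positive definite combination $\alpha A + \beta B$ to perform a change of basis that makes that combination the identity.

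First I would set $C = \alpha A + \beta B$, which by hypothesis is symmetric and positive definite. Hence $C$ admits a symmetric positive definite square root $C^{1/2}$ with symmetric inverse $C^{-1/2}$. Consider the two symmetric matrices $\tilde A = C^{-1/2} A C^{-1/2}$ and $\tilde B = C^{-1/2} B C^{-1/2}$; by construction they satisfy
\[
\alpha \tilde A + \beta \tilde B = C^{-1/2}(\alpha A + \beta B)C^{-1/2} = I.
\]
Without loss of generality we may assume $\beta \neq 0$, interchanging the roles of $A$ and $B$ if necessary (since at least one of $\alpha,\beta$ is nonzero). Apply the spectral theorem to the symmetric matrix $\tilde A$: there exists an orthogonal matrix $Q$ and a diagonal matrix $\varLambda$ with $Q^\top \tilde A Q = \varLambda$. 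Using the identity above,
\[
Q^\top \tilde B Q = \beta^{-1}\bigl(I - \alpha Q^\top \tilde A Q\bigr) = \beta^{-1}(I - \alpha \varLambda),
\]
which is diagonal; call it $\mathrm{M}$.

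Now set $T = C^{-1/2} Q$, so $T$ is nonsingular with inverse $T^{-1} = Q^\top C^{1/2}$ and $(T^{-1})^\top = C^{1/2} Q$. Unwinding the definitions gives
\[
(T^{-1})^\top \varLambda T^{-1} = C^{1/2} Q \varLambda Q^\top C^{1/2} = C^{1/2} \tilde A \, C^{1/2} = A,
\]
and identically $(T^{-1})^\top \mathrm{M} T^{-1} = B$. This yields the required simultaneous diagonalization.

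The argument is essentially routine once one notices the ``whitening'' trick $C^{-1/2}(\cdot)C^{-1/2}$; there is no real obstacle, only the cosmetic point of handling the degenerate case $\beta = 0$ (or $\alpha = 0$) by a symmetry swap. If I wanted a unified presentation I would instead first diagonalize \emph{whichever} of $\tilde A, \tilde B$ has a nonzero coefficient in the identity, and deduce diagonality of the other from the linear relation $\alpha \tilde A + \beta \tilde B = I$.
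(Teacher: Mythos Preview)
Your proof is correct; the whitening substitution $C^{-1/2}(\cdot)C^{-1/2}$ followed by an orthogonal diagonalization of one factor is the standard argument, and the linear relation $\alpha\tilde A+\beta\tilde B=I$ cleanly forces the other factor to be diagonal in the same basis. The paper does not give its own proof of this theorem at all---it simply cites Theorem~IV.3.5 of Stewart and Sun, \emph{Matrix Perturbation Theory}---so your argument supplies what the paper omits rather than diverging from it.
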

If in the decomposition
$T=[u_1,u_2,\ldots,u_n]$, $\varLambda =\diag(\lambda_1,\allowbreak\ldots,\lambda_n)$,
${\rm M}=\diag(\mu_1,\ldots,\mu_n)$,
then the numbers $\lambda_i/\mu_i\in \mathbb{R}\cup \{\infty\}$ are called generalized eigenvalues,
and the columns $u_i$ of the matrix $T$ are called the right generalized eigenvectors
of the matrix pencil $\langle A, B\rangle$
because the following relation holds true:
\[
\mu_i A u_i = \lambda_i B u_i.
\]

Theorem~\ref{thm-gedpdp} is well known; see
Theorem~IV.3.5 in \cite[page 318]{StewartSun}.
The conditions of  Theorem \ref{thm-gedpdp} can be changed as follows:
\begin{thm}
\label{thm-gedpsp}
Let $A$ and $B$ be symmetric positive semidefinite matrices.
Then there exist a nonsingular matrix $T$
and diagonal matrices $\varLambda$ and ${\rm M}$ such that
\begin{equation}
\label{eq:0501} A = \bigl(T^{-1} \bigr)^\top \varLambda
T^{-1}, \qquad B= \bigl(T^{-1} \bigr)^\top {\rm M}
T^{-1}.
\end{equation}
\end{thm}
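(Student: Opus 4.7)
My plan is to reduce to Theorem~\ref{thm-gedpdp} by factoring out the common null space. Set $V = \Kernel A \cap \Kernel B$ and let $W = V^\perp$. Since $A$ and $B$ are symmetric and both vanish on $V$, we have $w^\top A v = v^\top A w = 0$ and similarly for $B$ whenever $w \in W$ and $v \in V$; thus the splitting $\mathbb{R}^n = W \oplus V$ reduces both quadratic forms. Choosing an orthogonal matrix $Q=[Q_W\ Q_V]$ whose last columns form an orthonormal basis of $V$, we obtain the block-diagonal representations
\[
Q^\top A Q = \begin{pmatrix} A_W & 0 \\ 0 & 0 \end{pmatrix}, \qquad Q^\top B Q = \begin{pmatrix} B_W & 0 \\ 0 & 0 \end{pmatrix},
\]
with $A_W$, $B_W$ symmetric positive semidefinite.

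The key observation is that $A_W + B_W$ is positive definite on $W$. Indeed, if $x \in W$ satisfies $x^\top (A_W + B_W) x = 0$, then $x^\top A_W x = x^\top B_W x = 0$, and positive semidefiniteness forces $A_W x = B_W x = 0$; lifting $x$ back to $\mathbb{R}^n$, it therefore lies in $\Kernel A \cap \Kernel B = V$, which combined with $x \in W = V^\perp$ yields $x = 0$. We can now apply Theorem~\ref{thm-gedpdp} with $\alpha = \beta = 1$ to the pair $(A_W, B_W)$ and obtain a nonsingular $T_W$ together with diagonal $\varLambda_W$, ${\rm M}_W$ such that $A_W = (T_W^{-1})^\top \varLambda_W T_W^{-1}$ and $B_W = (T_W^{-1})^\top {\rm M}_W T_W^{-1}$.

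Finally, assemble the pieces by setting
\[
T = Q \begin{pmatrix} T_W & 0 \\ 0 & I \end{pmatrix}, \qquad \varLambda = \begin{pmatrix} \varLambda_W & 0 \\ 0 & 0 \end{pmatrix}, \qquad {\rm M} = \begin{pmatrix} {\rm M}_W & 0 \\ 0 & 0 \end{pmatrix};
\]
a direct computation using $T^{-1} = \bigl(\begin{smallmatrix} T_W^{-1} & 0 \\ 0 & I \end{smallmatrix}\bigr) Q^\top$ verifies \eqref{eq:0501}.

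I do not anticipate a genuine obstacle. The only hypothesis missing compared to Theorem~\ref{thm-gedpdp}, namely the existence of a positive definite linear combination $\alpha A + \beta B$, is precisely what quotienting by the common kernel restores; the argument above shows that summing the restrictions to $V^\perp$ suffices. The degenerate cases $V = \{0\}$, in which Theorem~\ref{thm-gedpdp} applies directly, and $V = \mathbb{R}^n$, in which case $A = B = 0$ and any nonsingular $T$ works with $\varLambda = {\rm M} = 0$, deserve only a brief mention.
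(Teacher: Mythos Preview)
Your argument is correct: factoring out the common kernel $V=\Kernel A\cap\Kernel B$ restores positive definiteness of $A_W+B_W$ on $V^\perp$, so Theorem~\ref{thm-gedpdp} applies there, and the block assembly is routine. The paper does not supply its own proof of Theorem~\ref{thm-gedpsp}; it simply cites Newcomb~\cite{Newcomb1961}. Your reduction-to-the-definite-case argument is in fact the standard route (and is essentially what Newcomb does), so there is nothing substantive to compare.
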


In Theorem \ref{thm-gedpdp} $\lambda_i$ and $\mu_i$ cannot be
equal to 0 for the same $i$, while in Theorem \ref{thm-gedpsp} they can.
On the other hand, in Theorem \ref{thm-gedpdp} $\lambda_i$ and $\mu_i$
can be any real numbers, while in Theorem \ref{thm-gedpsp}
$\lambda_i \ge 0$ and $\mu_i\ge 0$.
Theorem~\ref{thm-gedpsp} is proved in \cite{Newcomb1961}.

\begin{remforp}\label{rem:remarkeqrank}
If the matrices $A$ and $B$ are symmetric and positive semidefinite,
then
\begin{equation}
\label{eq:remarkeqrank} \rank\langle A, B \rangle = \rank(A+B),
\end{equation}
where
\[
\rank\langle A, B \rangle = \max_k \rank(A + k B)
\]
is the \textit{determinantal rank\/} of the matrix pencil $\langle A, B \rangle$. (For square $n\times n$ matrices
$A$ and $B$, the determinantal rank characterizes
if the matrix pencil is regular or singular.
The matrix pencil $\langle A,B\rangle$ is regular
if  $\rank\langle A, B \rangle = n$, and
singular if $\rank\langle A, B \rangle < n$.)

The inequality $\rank\langle A, B \rangle \ge \rank(A+B)$
follows from the definition of the determinantal rank.
For all $k \in \mathbb{R}$ and for all such vectors $x$
that $(A + B) x = 0$ we have
$x^\top A x + x^\top B x = 0$, and
because of positive semidefiniteness of matrices $A$ and $B$,
$x^\top A x \ge 0$ and $x^\top B x \ge 0$.
Thus, $x^\top A x = x^\top B x = 0$.
Again, due to positive semidefiniteness of $A$ and $B$,
$A x = B x = 0$ and $(A + kB) x = 0$.
Thus, for all $k \in \mathbb{R}$
\begin{align*}
\bigl\{x : (A + B) x = 0 \bigr\} &\subset \bigl\{x : (A + k B) x = 0 \bigr\},
\\
\rank(A+B) &\ge \rank(A+kB),
\\
\rank\langle A, B\rangle = \max_{k} \rank (A+kB) &\le
\rank(A + B),
\end{align*}
and \eqref{eq:remarkeqrank} is proved.
\end{remforp}

\begin{remforp}\label{rem:remark5.2-1}
Let $A$ and $B$ be positive semidefinite matrices of the same size
such that $\rank(A+B) = \rank(B)$.
The representation \eqref{eq:0501} might be not unique.
But there exists a representation \eqref{eq:0501} such that
\begin{align*}
\lambda_i &= \mu_i = 0 \quad \text{if} \quad i=1,\ldots,
\defect(B),
\\
\mu_i &> 0 \quad \text{if} \quad i = \defect(B)+1,\ldots,n,
\\
T &= \bigl[\!\underset{n \times \defect(B)} {T_1}\,\, \underset{n
\times \rank(B)} {T_2}\! \bigr],
\\
T_1^\top T_2^{} &= 0.
\end{align*}
(Here if the matrix $B$ is nonsingular, then
$T_1$ is $n\times0$ empty matrix;
if $B=0$, then $T_2$ is $n\times0$ matrix.
In these marginal cases, $T_1^\top T_2$ is an empty matrix
and is considered to be zero matrix.)
The desired representation can be obtained from
\cite{deLeeuw} for $S = 0$ (in de Leeuw's notation).
This representation is constructed as follows.
Let the columns of matrix $T_1$ make the orthogonal normalized basis
of $\Kernel(B) = \{v : B v = 0\}$.
There exists $n\times \rank(B)$ matrix $F$ such that $B = F F^\top$.
Let the columns of matrix $L$ be the orthogonal normalized eigenvectors
of the matrix $\pinvb(F) A (\pinvb(F))^\top$.
Then set $T_2 =  (\pinvb(F))^\top L$.
Note that the notation $S$, $F$ and $L$ is borrowed from
\cite{deLeeuw}, and is used only once.
Elsewhere in the paper,
the matrix $F$ will have a different meaning.
\end{remforp}

\begin{prop}\label{prop:forremark5.2-1}
Let $A$ and $B$ be symmetric positive semidefinite
matrices such that
$\rank(A+B) = \rank(B)$.
In the simultaneous diagonalization in Theorem \ref{thm-gedpsp}
with Remark~\ref{rem:remark5.2-1}
\begin{align*}
\pinvb(B) &= T \pinvb(\mathrm{M}) T^\top,
\\
\pinvb(\mathrm{M}) &= \diag \bigl(\underbrace{0, \ldots, 0}_{\defect(B)},
\mu^{-1}_{\defect(B)+1}, \ldots, \mu^{-1}_n
\bigr).
\end{align*}
\end{prop}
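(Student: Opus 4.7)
The plan is to verify directly that $G := T \pinvb(\mathrm{M}) T^\top$ is the Moore--Penrose pseudoinverse of $B = (T^{-1})^\top \mathrm{M} T^{-1}$ by checking the four defining identities. The second equality in the proposition is just the pseudoinverse of a diagonal matrix, obtained by inverting its nonzero entries, so it requires no argument.

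The crucial ingredient I would extract first is the block structure from Remark~\ref{rem:remark5.2-1}. Since $T = [T_1,\ T_2]$ with $T_1^\top T_2 = 0$,
\[
T^\top T \;=\; \begin{pmatrix} T_1^\top T_1 & 0 \\ 0 & T_2^\top T_2 \end{pmatrix},
\]
block-diagonal in the same partition as $\mathrm{M} = \diag(0_{\defect(B)},\,\mathrm{M}_2)$ with $\mathrm{M}_2$ positive definite. Consequently $T^\top T$ commutes with the diagonal projector
\[
P := \mathrm{M}\pinvb(\mathrm{M}) = \pinvb(\mathrm{M})\mathrm{M},
\]
whose blocks are a $\defect(B){\times}\defect(B)$ zero block followed by a $\rank(B){\times}\rank(B)$ identity block.

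From this commutativity the four identities fall out by short algebra. Direct computation yields $BG = (T^{-1})^\top P T^\top$ and $GB = T P T^{-1}$; each of these is symmetric, since after pre-multiplying by $T^\top$ and post-multiplying by $T$ the symmetry requirement reduces to exactly $P T^\top T = T^\top T P$. The remaining identities $BGB = B$ and $GBG = G$ collapse, by cancelling $T^{-1} T = I$, to the standard pseudoinverse identities $\mathrm{M}\pinvb(\mathrm{M})\mathrm{M} = \mathrm{M}$ and $\pinvb(\mathrm{M})\mathrm{M}\pinvb(\mathrm{M}) = \pinvb(\mathrm{M})$ for diagonal matrices. The only genuinely nontrivial step is the commutation $P T^\top T = T^\top T P$; this is exactly what the orthogonality condition $T_1^\top T_2 = 0$ in Remark~\ref{rem:remark5.2-1} was engineered to supply, and without such a special representation of $T$ the formula would fail in general.
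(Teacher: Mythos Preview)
Your proof is correct and follows essentially the same route as the paper: verify the four Moore--Penrose conditions, with the two product identities $BGB=B$ and $GBG=G$ reducing to trivial diagonal algebra, and the two symmetry conditions reducing to the commutation $P\,T^\top T = T^\top T\,P$, which is exactly what the block-diagonal structure of $T^\top T$ (coming from $T_1^\top T_2 = 0$) provides. The paper's proof is terser but makes the identical reduction to the same key equality $(T^{-1})^\top P_{\mathrm{M}} T^\top = T P_{\mathrm{M}} T^{-1}$.
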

\begin{proof}
Let us verify the Moore--Penrose conditions:
\begin{align}
\label{eq:czMPi} \bigl(T^{-1}\bigr)^\top \textrm{M} T^{-1}\,\pinvXSXTMT\,\bigl(T^{-1}\bigr)^\top \textrm{M} T^{-1}
&= \bigl(T^{-1}\bigr)^\top \textrm{M} T^{-1},
\\
\label{eq:czMPii} \pinvXSXTMT\,\bigl(T^{-1}\bigr)^\top \textrm{M} T^{-1}\,\pinvXSXTMT &= \pinvXSXTMT,
\end{align}
and the fact that the matrices $\XSXTMT\,\pinvXSXTMT$ and
$\pinvXSXTMT \times\break \XSXTMT$ are symmetric. The equalities
$\eqref{eq:czMPi}$ and $\eqref{eq:czMPii}$ can be
verified directly; and the symmetry properties can be
reduced to the equality
\begin{equation}
\label{eq:pinvB-forsym} \bigl(T^{-1} \bigr)^\top P_{\mathrm{M}}
T^\top = T^{} P_{\mathrm{M}} T^{-1}
\end{equation}
with $P_{\mathrm{M}} = \mathrm{M} \pinvb(\mathrm{M}) =
\diag(\underbrace{0,\ldots,0}\limits_{\defect(B)},
      \underbrace{1,\ldots,1}_{\rank(B)}) $.

Since $T_1^\top T_2^{} = 0$, $T^\top T^{}$ is
a block diagonal matrix.
Hence $P_{\mathrm{M}} T^\top T =
T^\top T^{} P_{\mathrm{M}}$,
whence \eqref{eq:pinvB-forsym} follows.
\end{proof}

\subsection{Angle between two linear subspaces}\label{sss:sin}
Let $V_1$ and $V_2$ be linear subspaces of $\mathbb{R}^n$,
with $\dim V_1 = k_1 \le \dim V_2 = k_2$.
Then there exists an orthogonal $n \times n$ matrix U
such that
\begin{align}
\label{eq:defcaV1} V_1 &= \colspan \left\langle  U \begin{pmatrix}
\diag_{k_2 \times k_1}
(\cos \theta_i, \; i=1,\ldots,k_1) \\
\diag_{(n-k_2) \times k_1}
(\sin \theta_i, \; i=1,\ldots,\min(n - k_2, \: k_1 ))
\end{pmatrix}
\right\rangle  ,
\\
V_2 &= \colspan \left\langle  U \begin{pmatrix}
I_{k_2} \\
0_{(n-k_2) \times k_2}
\end{pmatrix} \right\rangle  .
\label{eq:defcaV2}
\end{align}
Here rectangular diagonal matrices are allowed.
If in \eqref{eq:defcaV1} there are more cosines than sines
(i.e., if $k_2 + k_1 > n$),
then the excessive cosines should be equal to 1,
so the columns of the bidiagonal matrix in \eqref{eq:defcaV1} are unit vectors (which are orthogonal
to each other).
Here the columns of $U$ are the vectors of some convenient ``new'' basis in
$\mathbb{R}^n$, so $U$ is a transitional matrix from the standard basis to ``new'' basis;
the columns of matrix products in $\colspana<\cdots>$
in \eqref{eq:defcaV1} and \eqref{eq:defcaV2} are the vectors of the bases
of subspaces $V_1$ and $V_2$;
the bidiagonal matrix in \eqref{eq:defcaV1} and
the diagonal matrix in \eqref{eq:defcaV2}
are the transitional matrices from ``new'' basis in $\mathbb{R}^n$
to the bases in $V_1$ and $V_2$, respectively.

The angles $\theta_k$ are called the canonical angles between $V_1$ and $V_2$.
They can be selected so that
 $0 \le \theta_k \le \frac{1}2 \pi$
(to achieve this, we might have to reverse some vectors of the bases).

Denote $P_{V_1}$ the matrix of the orthogonal projector onto $V_1$.
The singular values of the matrix $P_{V_1} (I-P_{V_2})$
are equal to $\sin\theta_k$ ($k=1,\ldots,k_1$);
besides them, there is a singular value $0$
of multiplicity $n-k_1$.

Denote the greatest of the sines of the canonical eigenvalues
\begin{equation}
\label{eq:defmaxsin} \big\|\sin \angle(V_1, V_2) \big\| = \max
_{k=1,\ldots,k_1} \sin \theta_k = \big\| P_{V_1} (I -
P_{V_2}) \big\| .
\end{equation}

If $\dim V_1 = 1$, $V_1 = \colspana<v>$, then
\[
\sin \angle(v, V_2) = \biggl\llVert (I - P_{V_2})
\frac{v}{\|v\|} \biggr\rrVert = \dist \biggl( \frac{1}{\|v\|}v,
V_2 \biggr).
\]
This can be generalized for $\dim V_1 \ge 1$:
\[
\big\|\sin \angle(V_1, V_2) \big\| = \max_{v \in V_1 \setminus\{0\}}
\biggl\llVert (I - P_{V_2}) \frac{v}{\|v\|} \biggr\rrVert ,
\]
whence
\begin{align}
\big\|\sin \angle(V_1, V_2) \big\|^2 &= \max
_{v \in V_1 \setminus\{0\}} \frac{v^\top (I - P_{V_2}) v} {
\|v\|^2},
\nonumber
\\
1 - \big\|\sin \angle(V_1, V_2) \big\|^2 &= \min
_{v \in V_1 \setminus\{0\}} \frac{v^\top P_{V_2} v} {
\|v\|^2}. \label{eq:1-sin2}
\end{align}

If $\dim V_1 = \dim V_2$, then
$\|\sin \angle(V_1, V_2) \| = \| P_{V_1} - P_{V_2} \|$, and therefore\break
$\|\sin \angle(V_1, V_2) \| = \|\sin \angle(V_2, V_1) \|$.
Otherwise the right-hand side of \eqref{eq:defmaxsin}
may\break change if $V_1$ and $V_2$ are swapped
(particularly, if $\dim V_1 < \dim V_2$,
then $\| P_{V_1} (I - P_{V_2}) \|$ may or may not
be equal to 1, but always $\| P_{V_2} (I - P_{V_1}) \| = 1$;
see the proof of Lemma~\ref{lem:bofosin2}
in the appendix).

We will often omit ``span'' in arguments of sine.
Thus, for $n$-row matrices $X_1$ and $X_2$,
$\|\sin\angle(X_1, V_2)\| = \|\sin\angle(\colspana<X_1>,V_2)\|$ and
$\|\sin\angle(X_1, X_2)\| =\break \|\sin\angle(\colspana<X_1>,\colspana<X_2>)\|$.

{\sloppy
\begin{lem}\label{lem:subsset1112132}
Let $V_{11}$, $V_2$ and $V_{13}$ be three linear subspaces in $\mathbb{R}^n$,
with $\dim V_{11} = d_1 < \dim V_2 = d_2 < \dim V_{13} = d_3$
and $V_{11} \subset V_{13}$.
Then there exists such a linear subspace $V_{12}\subset\mathbb{R}^n$
that $V_{11} \subset V_{12} \subset V_{13}$, $\dim V_{12} = d_2$, and
$\|\sin\angle(V_{12},V_2)\| = 1$.
\end{lem}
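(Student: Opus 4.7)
The plan is to exploit the characterization of $\|\sin\angle(V_{12},V_2)\|=1$ furnished by \eqref{eq:1-sin2}: the minimum on its right-hand side equals $0$ exactly when $V_{12}$ contains a nonzero vector annihilated by $P_{V_2}$, that is, when $V_{12}\cap V_2^\bot\neq\{0\}$. So it suffices to construct a $d_2$-dimensional subspace $V_{12}$ with $V_{11}\subset V_{12}\subset V_{13}$ that meets $V_2^\bot$ nontrivially; the equality $\|\sin\angle(V_{12},V_2)\|=1$ will then follow automatically.

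The construction itself rests on a short dimension count. From the standard inequality $\dim(U\cap W)\ge \dim U+\dim W-n$ one obtains
\[
\dim\bigl(V_{13}\cap V_2^\bot\bigr)\ge d_3+(n-d_2)-n=d_3-d_2\ge 1,
\]
so a nonzero vector $v\in V_{13}\cap V_2^\bot$ can be fixed. Setting $U:=V_{11}+\mathrm{span}(v)$, one has $U\subset V_{13}$ and $\dim U\le d_1+1\le d_2$. Since $\dim V_{13}=d_3\ge d_2$, the subspace $U$ can be extended inside $V_{13}$ to a subspace $V_{12}$ of dimension exactly $d_2$.

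By construction $V_{11}\subset V_{12}\subset V_{13}$, $\dim V_{12}=d_2$, and $v\in V_{12}\cap V_2^\bot\setminus\{0\}$. Applying \eqref{eq:1-sin2} at the test vector $v$ yields
\[
1-\|\sin\angle(V_{12},V_2)\|^2\le\frac{v^\top P_{V_2}v}{\|v\|^2}=0,
\]
while the opposite inequality $\|\sin\angle(V_{12},V_2)\|\le 1$ is immediate from \eqref{eq:defmaxsin}. Hence $\|\sin\angle(V_{12},V_2)\|=1$, as required.

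There is no real obstacle in this argument; it is essentially a dimension count plus a trivial extension step. The strict inequalities $d_1<d_2<d_3$ enter in exactly two places: $d_2<d_3$ guarantees that $V_{13}\cap V_2^\bot$ is nonzero, and $d_1<d_2$ guarantees that $U=V_{11}+\mathrm{span}(v)$ still fits within the dimension budget so that the extension to dimension $d_2$ inside $V_{13}$ is possible. The only minor subtlety worth flagging is that the chosen $v$ may happen to lie in $V_{11}$; this is harmless, as it only means $U=V_{11}$ and the extension proceeds verbatim.
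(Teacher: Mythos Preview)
Your proof is correct and follows essentially the same route as the paper: both arguments use the dimension count $\dim(V_{13}\cap V_2^\bot)\ge d_3-d_2\ge 1$ to pick a nonzero $v\in V_{13}\cap V_2^\bot$, extend $V_{11}+\mathrm{span}(v)$ to a $d_2$-dimensional subspace $V_{12}\subset V_{13}$, and then read off $\|\sin\angle(V_{12},V_2)\|=1$ from the presence of $v\in V_{12}\cap V_2^\bot$. The only cosmetic difference is that the paper verifies the last step via $P_{V_{12}}(I-P_{V_2})v=v$ and \eqref{eq:defmaxsin}, whereas you invoke \eqref{eq:1-sin2}; these are equivalent.
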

}\begin{proof}
Since $\dim V_{13} + \dim V_2^\bot = d_3 + n - d_2 > n$,
there exists a vector $v \neq 0$, $v \in V_{13} \cap V_2^\bot$.
Since $\max(d_1, 1) \le \dim \colspana<V_{11}, v> \le d_1 + 1$,
it holds that
\[
\dim \colspana<V_{11}, v> \le d_2 < \dim
V_{13} .
\]
Therefore, there exists a $d_2$-dimensional subspace $V_{12}$
such that $\colspana<V_{11}, v> \,{\subset}\, V_{12} \subset V_{13}$.
Then $V_{11}\subset V_{12} \subset V_{13}$ and
$v \in V_{12} \cap V_2^\bot$.
Hence
$P_{V_{12}} (I - P_{V_2}) v = v$,
$\|P_{V_{12}} (I - P_{V_2})\| \ge 1$,
and due to equation \eqref{eq:defmaxsin},
$\|\sin\angle(V_{12},\, V_2)\| = 1$.
Thus, the subspace $V_{12}$ has the desired properties.
\end{proof}

\subsection{Perturbation of eigenvectors and invariant spaces}
\begin{lem}
\label{lem-pertrv0}
Let $A$, $B$, $\tilde A$ be symmetric matrices,
$\lambda_{\min}(A)=0$, $\lambda_2(A)>0$
and $\lambda_{\min}(B)\ge 0$.
Let $A x_0=0$ and  $B x_0\neq 0$
(so $x_0$ is an eigenvector of the matrix $A$ that corresponds to the
minimum eigenvalue).
Let minimum of the function
\[
f(x):=\frac{x^\top (A+\tilde A) x}{x^\top B x}, \qquad x^\top B x>0,
\]
be attained at the point $x_*$. Then
\[
\sin^2 \angle(x_*,x_0) \le \frac{\|\tilde A\|}{\lambda_2(A)} \biggl( 1+
\frac{\|x_0\|^2}{x_0^\top B x_0}\, \frac{x^\top B x} {\|x\|^2} \biggr).
\]
\end{lem}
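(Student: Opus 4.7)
The plan is to compare the values of the Rayleigh-like quotient $f$ at the minimizer $x_*$ and at the reference vector $x_0$, combining this with a lower bound on $x_*^\top A x_*$ coming from the spectral gap of $A$.

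First I would decompose $x_*$ relative to $x_0$. Let $P_0 = x_0 x_0^\top / \|x_0\|^2$ be the orthogonal projector onto $\colspana<x_0>$ and set $x_*^\parallel = P_0 x_*$, $x_*^\perp = (I-P_0) x_*$, so that $\sin^2\angle(x_*,x_0) = \|x_*^\perp\|^2 / \|x_*\|^2$. Since $A$ is symmetric with $A x_0 = 0$, the subspace $\colspana<x_0>^\bot$ is $A$-invariant and the restriction of $A$ to it has smallest eigenvalue $\lambda_2(A)$. Therefore
\begin{equation*}
x_*^\top A x_* = (x_*^\perp)^\top A\, x_*^\perp \ge \lambda_2(A)\, \|x_*^\perp\|^2.
\end{equation*}
Note also that $B\succeq 0$ together with $B x_0 \neq 0$ gives $x_0^\top B x_0 > 0$, so $f(x_0)$ is well defined.

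Next I would exploit the minimality of $f$ at $x_*$ by evaluating at $x_0$:
\begin{equation*}
\frac{x_*^\top A x_* + x_*^\top \tilde A x_*}{x_*^\top B x_*} = f(x_*) \le f(x_0) = \frac{x_0^\top \tilde A x_0}{x_0^\top B x_0},
\end{equation*}
where I used $A x_0 = 0$. Rearranging and applying the operator-norm bounds $|x_0^\top \tilde A x_0|\le \|\tilde A\|\,\|x_0\|^2$ and $|x_*^\top \tilde A x_*|\le \|\tilde A\|\,\|x_*\|^2$ gives
\begin{equation*}
x_*^\top A x_* \le \|\tilde A\|\,\frac{\|x_0\|^2}{x_0^\top B x_0}\, x_*^\top B x_* + \|\tilde A\|\,\|x_*\|^2.
\end{equation*}
Chaining this with the lower bound $\lambda_2(A)\|x_*^\perp\|^2 \le x_*^\top A x_*$ and dividing by $\lambda_2(A)\|x_*\|^2$ yields the claimed inequality, with the factor $\frac{x_*^\top B x_*}{\|x_*\|^2}$ on the right appearing exactly as written (the $x$ in the statement is evidently $x_*$).

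I do not expect a genuine obstacle here; the argument is a standard two-sided bracketing of the Rayleigh quotient. The only subtle points are recognizing that $A x_0 = 0$ makes $f(x_0)$ purely a $\tilde A$-term (so the gain over the naive perturbation estimate comes for free), and that the orthogonal splitting with respect to $x_0$ must be taken in the Euclidean inner product (not the $B$-inner product) in order to activate the spectral-gap bound $\lambda_2(A)$. The ratio $\frac{x_*^\top B x_*}{\|x_*\|^2}$ that survives on the right-hand side is the price paid for comparing the Euclidean norm of $x_*^\perp$ with the $B$-quadratic form appearing in $f$, and it is precisely this factor that will be controlled in the later applications by choosing $B$ appropriately (e.g.\ $B=N$ as in Section~\ref{sec:sketch}).
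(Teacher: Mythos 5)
Your argument is correct and is essentially the paper's own proof: the same comparison $f(x_*)\le f(x_0)$ with $A x_0=0$ killing the $A$-term at $x_0$, the same norm bounds on the $\tilde A$-terms, and the same spectral-gap estimate $x_*^\top A x_*\ge\lambda_2(A)\,\|x_*\|^2\sin^2\angle(x_*,x_0)$ obtained from the eigendecomposition of $A$ (your orthogonal splitting along $x_0$ is just that step made explicit). Your reading of the unbound $x$ in the statement as $x_*$ also matches the paper's intent.
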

\begin{remforp}\label{rem:lem-pertrv0}
The function $f(x)$ may or may not attain the minimum.
Thus the condition $f(x_*) = \min_{x^\top B x>0} f(x)$
sometimes cannot be satisfied.  But the theorem is still true
if
\begin{equation}
\label{eq-infmin} \liminf\limits
_{x\to x_*}f(x) = \inf\limits
_{x:\; x^\top\! B x > 0} f(x)
\end{equation}
and $x_* \neq 0$.
\end{remforp}

Now proclaim the multivariate generalization of
Lemma~\ref{lem-pertrv0}.
We will not generalize Remark~\ref{rem:lem-pertrv0}.
Instead, we will check that the minimum is attained
when we use Lemma~\ref{lem-pertrVV0}
(see Proposition~\ref{prop-6eximl}).

\begin{lem}
\label{lem-pertrVV0}
Let $A$, $B$, $\tilde A$ be $n \times n$ symmetric matrices,
$\lambda_i(A) = 0$ for all $i{=}1,\ldots,d$,
$\lambda_{d+1}(A) > 0$,
$\lambda_{\min}(B) \ge 0$.
Let $X_0$ be $n \times d$ matrix such that
$A X_0 = 0$ and the matrix $X_0^\top B X_0^{}$ is nonsingular.
Let the functional
\begin{align}
f(X) &= \lambda_{\max} \bigl( \bigl(X^\top B X
\bigr)^{-1} X^\top (A+\tilde A) X \bigr) \quad \mbox{if $X \in
\mathbb{R}^{n\times d}$ and $X^\top B X > 0$,}
\nonumber
\\
f(X) &\;\mbox{is not defined otherwise,} \label{eqf-lempertrVV0}
\end{align}
attain its minimum.
Then for any point $X$ where the minimum is attained,
\[
\big\|\sin\angle (X, X_0)\big\|^2 \le \frac{\|\tilde A\|}{\lambda_{d+1}(A)} \bigl(
1 + \|B\|\, \lambda_{\max} \bigl( \bigl(X_0^\top B
X_0^{} \bigr)^{-1} X_0^\top
X_0^{} \bigr) \bigr) .
\]
\end{lem}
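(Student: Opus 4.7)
The plan is to mimic the univariate proof of Lemma~\ref{lem-pertrv0} using matrix Rayleigh quotients. First observe that both $X$ and $X_0$ have full column rank: since $X^\top B X$ and $X_0^\top B X_0$ are nonsingular and $B \succeq 0$, any vector $u$ in the kernel of $X$ (resp.\ $X_0$) would make the associated $B$-quadratic form vanish. Hence $\colspana<X>$ and $\colspana<X_0>$ both have dimension $d$, their canonical angles are well defined, and the maximization equivalent to \eqref{eq:1-sin2} gives
\[
\big\|\sin\angle(X,X_0)\big\|^2 = \lambda_{\max}\bigl((X^\top X)^{-1} X^\top (I - P_{X_0}) X\bigr).
\]

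From $AX_0 = 0$, $\lambda_{d+1}(A) > 0$ and the full column rank of $X_0$, the zero eigenspace of $A$ coincides with $\colspana<X_0>$, so the spectral theorem yields the Loewner bound $A \succeq \lambda_{d+1}(A) (I - P_{X_0})$. Combined with the display above this produces
\[
\lambda_{d+1}(A) \big\|\sin\angle(X,X_0)\big\|^2 \le \lambda_{\max}\bigl((X^\top X)^{-1} X^\top A X\bigr),
\]
and the task reduces to bounding the right-hand side by $\|\tilde A\|\bigl(1 + \|B\|\, \lambda_{\max}((X_0^\top B X_0)^{-1} X_0^\top X_0)\bigr)$. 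Writing $A = (A+\tilde A) - \tilde A$, the definition of $f$ furnishes $X^\top (A+\tilde A) X \preceq f(X) X^\top B X$, while $-\tilde A \preceq \|\tilde A\| I$ gives $-X^\top \tilde A X \preceq \|\tilde A\| X^\top X$, and $X^\top B X \preceq \|B\| X^\top X$ is immediate. Adding the first two yields
\[
X^\top A X \preceq f(X) X^\top B X + \|\tilde A\| X^\top X.
\]
Finally, the minimality of $X$ gives $f(X) \le f(X_0)$, and since $AX_0 = 0$, sandwiching the inequality $X_0^\top \tilde A X_0 \preceq \|\tilde A\| X_0^\top X_0$ by $(X_0^\top B X_0)^{-1/2}$ yields $f(X_0) \le \|\tilde A\|\, \lambda_{\max}((X_0^\top B X_0)^{-1} X_0^\top X_0)$.

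The only subtlety is the sign of $f(X)$. If $f(X) \ge 0$, the Loewner inequality $X^\top B X \preceq \|B\| X^\top X$ multiplies cleanly by $f(X)$ in the right direction, and combining with $f(X) \le f(X_0)$ produces the announced bound on $\lambda_{\max}((X^\top X)^{-1} X^\top A X)$. If $f(X) < 0$, the term $f(X) X^\top B X$ is itself $\preceq 0$ and can be dropped, leaving $X^\top A X \preceq \|\tilde A\| X^\top X$, which already implies the claimed estimate since $\|B\| \lambda_{\max}((X_0^\top B X_0)^{-1} X_0^\top X_0) \ge 0$. Dividing by $\lambda_{d+1}(A)$ in either case finishes the proof.
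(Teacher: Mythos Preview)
Your proof is correct and follows essentially the same route as the paper's: both arguments combine the Loewner bound $A \succeq \lambda_{d+1}(A)(I-P_{X_0})$ with the Rayleigh-quotient identity for $\|\sin\angle(X,X_0)\|^2$ (the paper's Lemma~\ref{lem:bofosin2}), then use $f(X)\le f(X_0)$ together with the elementary estimates $\pm\tilde A \preceq \|\tilde A\|I$ and $B\preceq \|B\|I$. Your explicit case split on the sign of $f(X)$ is in fact cleaner than the paper's presentation, which compresses the same reasoning into the single line \eqref{neq-l1360} without spelling out why replacing $X^\top B X$ by $\|B\|X^\top X$ in the denominator preserves the inequality.
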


\subsection{Rosenthal inequality}
In the following theorems, a random variable $\xi$
is called \textit{centered\/} if $\Me \xi = 0$.\vadjust{\goodbreak}
\begin{thm}
\label{thm-RosenNeq-2+}
Let $\nu\ge2$ be a nonrandom real number.
Then there exist $\alpha\ge 0$ and $\beta\ge 0$ such that
for any set of centered
mutually independent random variables
$\{\xi_i, i=1,\ldots, m\}$,
$m{\ge}1$,
the following inequality holds true:
\[
\Me \Biggl[ \Bigg| \sum_{i=1}^m
\xi_i \Bigg|^\nu \Biggr] \le \alpha \sum
_{i=1}^m \Me \bigl[|\xi_i|^\nu
\bigr] + \beta \Biggl( \sum_{i=1}^m \Me
\xi_i^2 \Biggr) ^{\nu/2}.
\]
\end{thm}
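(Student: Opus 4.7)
The plan is to prove this classical Rosenthal-type inequality in three stages: the trivial case $\nu=2$, the case of even integer $\nu=2k$ via a combinatorial moment expansion, and the extension to arbitrary real $\nu\ge 2$ by a truncation argument.

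\textbf{Stages 1--2.} For $\nu=2$, independence and centering give $\Me(\sum_i\xi_i)^2=\sum_i\Me\xi_i^2$, so $\alpha=0$, $\beta=1$ work. For $\nu=2k$ with integer $k\ge 2$, I would expand $\Me(\sum_i\xi_i)^{2k}$ via the multinomial theorem. Independence factorizes the expectation of each monomial, and centering kills any term in which some index occurs with multiplicity one. The surviving terms are indexed by compositions $(n_1,\ldots,n_m)$ with every positive $n_i\ge 2$ (hence at most $k$ positive entries per term), weighted by $\prod_i\Me|\xi_i|^{n_i}$. Apply Lyapunov's interpolation
\[
\Me|\xi_i|^{n_i}\le(\Me\xi_i^2)^{(2k-n_i)/(2k-2)}(\Me|\xi_i|^{2k})^{(n_i-2)/(2k-2)},\quad 2\le n_i\le 2k,
\]
followed by weighted AM-GM on each resulting product, to rewrite every surviving monomial as a linear combination of products of factors $\Me\xi_j^2$ and $\Me|\xi_j|^{2k}$. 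Summing over indices and compositions collapses these into $(\sum_i\Me\xi_i^2)^k$ and $\sum_i\Me|\xi_i|^{2k}$, giving the inequality at $\nu=2k$ with constants depending only on $k$.

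\textbf{Stage 3 (general $\nu\ge 2$).} Set $k=\lceil\nu/2\rceil$. Because $\Me|\xi_i|^{2k}$ may be infinite under the hypothesis $\Me|\xi_i|^\nu<\infty$, I truncate. For $M>0$ to be chosen later, put
\[
\xi_i^{(1)}=\xi_i\mathbf{1}_{\{|\xi_i|\le M\}}-\Me[\xi_i\mathbf{1}_{\{|\xi_i|\le M\}}], \qquad \xi_i^{(2)}=\xi_i-\xi_i^{(1)};
\]
both families are centered and independent across $i$, and $|\xi_i^{(1)}|\le 2M$, so the $\xi_i^{(1)}$ have all moments finite. By $|a+b|^\nu\le 2^{\nu-1}(|a|^\nu+|b|^\nu)$ the problem reduces to bounding $\Me|\sum_i\xi_i^{(1)}|^\nu$ and $\Me|\sum_i\xi_i^{(2)}|^\nu$ separately. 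For the bounded part, Lyapunov ($\nu\le 2k$) followed by the Stage~2 bound at $2k$ and subadditivity of $x\mapsto x^{\nu/(2k)}$, together with $\Me|\xi_i^{(1)}|^{2k}\le(2M)^{2k-\nu}\Me|\xi_i^{(1)}|^\nu$, expresses the bound in terms of $M$, $\sum_i\Me|\xi_i|^\nu$, and $(\sum_i\Me\xi_i^2)^{\nu/2}$. For the tail part, the triangle inequality in $L^\nu$ combined with $\Me|\xi_i^{(2)}|^\nu\le 2^\nu\Me[|\xi_i|^\nu\mathbf{1}_{\{|\xi_i|>M\}}]\le 2^\nu\Me|\xi_i|^\nu$ yields a bound purely in terms of $\sum_i\Me|\xi_i|^\nu$. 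Optimizing $M$ balances the two contributions and gives the inequality with constants depending only on $\nu$.

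\textbf{Main obstacle.} Stage~3 is the delicate part. The combinatorial bound from Stage~2 at $\nu=2k$ uses $2k$-th moments that are not guaranteed under the hypothesis, so a direct Lyapunov interpolation between the $\nu=2$ and $\nu=2k$ inequalities is unavailable. Truncation resolves this, but the level $M$ has to be tuned so that the bounded-part estimate (via Lyapunov and the subadditivity of $x\mapsto x^{\nu/(2k)}$) and the tail-part estimate (via the $L^\nu$-triangle inequality) together assemble into the bilinear form $\alpha\sum_i\Me|\xi_i|^\nu+\beta(\sum_i\Me\xi_i^2)^{\nu/2}$; most of the technical work lies in tracking how the exponents $\nu/(2k)$ and $2k-\nu$ recombine without spoiling the universality of the constants.
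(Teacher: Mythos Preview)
The paper does not prove this theorem at all; it simply cites it as the classical Rosenthal inequality, referring to Theorem~2.9 on page~59 of Petrov's book. So there is no ``paper's proof'' to compare against, and your attempt to supply a self-contained argument goes beyond what the paper does.

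Stages~1 and~2 are standard and essentially correct in outline (the multinomial/Lyapunov/AM--GM route is one of the classical ways to get the even-integer case). The genuine gap is in Stage~3, at the tail estimate. You claim that ``the triangle inequality in $L^\nu$ \ldots\ yields a bound purely in terms of $\sum_i\Me|\xi_i|^\nu$''. But Minkowski's inequality gives only
\[
\Me\Bigl|\sum_i\xi_i^{(2)}\Bigr|^\nu
\le\Bigl(\sum_i\bigl(\Me|\xi_i^{(2)}|^\nu\bigr)^{1/\nu}\Bigr)^\nu
\le m^{\nu-1}\sum_i\Me|\xi_i^{(2)}|^\nu,
\]
and the factor $m^{\nu-1}$ cannot be absorbed into universal constants. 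In the i.i.d.\ case with a fixed unbounded law this tail bound grows like $m^\nu$, while the Rosenthal right-hand side grows like $m^{\nu/2}$. Optimising $M$ does not close the gap: under the sole hypothesis $\Me|\xi_i|^\nu<\infty$ you have no quantitative control on $\Me[|\xi_i|^\nu\mathbf{1}_{|\xi_i|>M}]$ beyond $\Me|\xi_i|^\nu$ itself, so for unbounded $\xi_i$ the tail contribution never becomes small enough to beat the $m^{\nu-1}$ loss. The triangle inequality discards independence entirely, and that is fatal here.

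The usual ways to pass from even integers to all $\nu\ge2$ are either (i) the Marcinkiewicz--Zygmund square-function inequality $\Me|\sum_i\xi_i|^\nu\le C_\nu\,\Me(\sum_i\xi_i^2)^{\nu/2}$ followed by an inductive/recursive bound on the square function, or (ii) a distributional (good-$\lambda$) argument with a level-dependent truncation inside $\Prob(|\sum_i\xi_i|>t)$. Either of these replaces your tail step with an estimate that actually exploits independence; your scheme as written does not.
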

Theorem~\ref{thm-RosenNeq-2+} is well known; see
\cite[Theorem 2.9, page 59]{Petrov1995}.

\begin{thm}
\label{thm-RosenNeq-12}
Let $\nu$ be a nonrandom real number, $1\le\nu\le2$.
Then there exists $\alpha\ge 0$ such that
for any set of centered
mutually independent random variables
$\{\xi_i, i=1,\ldots, m\}$,
$m{\ge}1$,
the inequality holds true:
\[
\Me \Biggl[ \Bigg| \sum_{i=1}^m
\xi_i \Bigg|^\nu \Biggr] \le \alpha \sum
_{i=1}^m \Me \bigl[|\xi_i|^\nu
\bigr].
\]
\end{thm}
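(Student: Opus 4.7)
The plan is to reduce to symmetric summands by a standard desymmetrization, insert independent Rademacher signs, and then exploit the fact that $\nu/2\le 1$ twice — once through Jensen/Lyapunov to replace the $L^\nu$ norm of a Rademacher sum by its $L^2$ norm, and once through the pointwise subadditivity inequality $(\sum a_i)^{\nu/2}\le \sum a_i^{\nu/2}$ valid on $[0,\infty)$ when $\nu/2\le 1$. A constant of the form $\alpha = 2^\nu$ (in particular $\alpha = 4$ uniformly in $\nu\in[1,2]$) will come out.

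In detail, let $\{\xi_i'\}$ be an independent copy of $\{\xi_i\}$ and set $S=\sum_i\xi_i$, $S'=\sum_i\xi_i'$. Since $\Me S'=0$ and $x\mapsto|x|^\nu$ is convex for $\nu\ge1$, Jensen's inequality applied conditionally on $S$ gives
\[
\Me|S|^\nu \;=\; \Me\bigl|S+\Me'(-S')\bigr|^\nu \;\le\; \Me|S-S'|^\nu \;=\; \Me\Bigl|\textstyle\sum_i\eta_i\Bigr|^\nu,
\]
where $\eta_i:=\xi_i-\xi_i'$ are independent and symmetric. Letting $\{\epsilon_i\}$ be i.i.d. Rademacher signs independent of the $\eta_i$, the joint distributions of $(\epsilon_i\eta_i)_i$ and $(\eta_i)_i$ coincide, so conditioning on $\eta$ and then applying Lyapunov (equivalently, Jensen for the concave map $t\mapsto t^{\nu/2}$) yields
\[
\Me_\epsilon\Bigl|\textstyle\sum_i\epsilon_i\eta_i\Bigr|^\nu \;\le\; \Bigl(\Me_\epsilon\Bigl|\textstyle\sum_i\epsilon_i\eta_i\Bigr|^2\Bigr)^{\nu/2} \;=\; \Bigl(\textstyle\sum_i\eta_i^2\Bigr)^{\nu/2}.
\]

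The concavity of $t\mapsto t^{\nu/2}$ on $[0,\infty)$ also gives the pointwise subadditivity bound $(\sum_i\eta_i^2)^{\nu/2}\le\sum_i|\eta_i|^\nu$; taking expectations and using $|\xi_i-\xi_i'|^\nu\le 2^{\nu-1}(|\xi_i|^\nu+|\xi_i'|^\nu)$ (convexity of $x\mapsto x^\nu$ on $[0,\infty)$) produces the chain
\[
\Me|S|^\nu \;\le\; \sum_i\Me|\eta_i|^\nu \;\le\; 2^{\nu-1}\sum_i\bigl(\Me|\xi_i|^\nu+\Me|\xi_i'|^\nu\bigr) \;=\; 2^\nu\sum_i\Me|\xi_i|^\nu,
\]
so one may take $\alpha=2^\nu$ (or $\alpha=4$ uniformly on $[1,2]$). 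There is no serious obstacle: each step is an application of either convexity, concavity, or the symmetry generated by the independent copy. The only mild subtlety is the initial Jensen step, which requires precisely the centering hypothesis $\Me\xi_i=0$ (and likewise $\Me\xi_i'=0$) in order to drop $\Me' S'$ inside the modulus; everything else is a mechanical application of the $\nu/2\le 1$ structure that separates this range from Theorem~\ref{thm-RosenNeq-2+}.
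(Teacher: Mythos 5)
Your proof is correct, but it takes a different route from the paper. The paper handles $\nu=1$ by the triangle inequality and, for $1<\nu\le 2$, invokes the Marcinkiewicz--Zygmund inequality (cited as an external result) to pass from $\Me\lvert\sum_i\xi_i\rvert^\nu$ to $\alpha\,\Me(\sum_i\xi_i^2)^{\nu/2}$, and then applies the same pointwise subadditivity $(\sum_i a_i)^{\nu/2}\le\sum_i a_i^{\nu/2}$ that you use. You instead give a self-contained argument: symmetrization with an independent copy (this is where centering is used, as you note), Rademacher randomization, conditional Jensen/Lyapunov over the signs to replace the $L^\nu$ norm of the Rademacher sum by $(\sum_i\eta_i^2)^{\nu/2}$ pointwise in $\eta$, then subadditivity and the $c_r$-inequality $\lvert a-b\rvert^\nu\le 2^{\nu-1}(\lvert a\rvert^\nu+\lvert b\rvert^\nu)$. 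In effect you reprove the only part of Marcinkiewicz--Zygmund that is needed here (the upper bound, for symmetrized summands, where the conditional constant is $1$), which buys two things the paper's citation does not: an explicit uniform constant $\alpha=2^\nu\le 4$ (your bound is essentially the von Bahr--Esseen inequality up to the constant), and a single argument covering all of $1\le\nu\le 2$ with no case split at $\nu=1$. A point worth appreciating in your ordering of steps: by taking the expectation over the signs conditionally on $\eta$ and applying subadditivity pointwise before integrating in $\eta$, you never need finite second moments of the $\xi_i$, which is exactly the trap a naive unconditional Jensen step would fall into. The only implicit hypothesis, common to both proofs, is that one may assume $\sum_i\Me\lvert\xi_i\rvert^\nu<\infty$ (otherwise the claim is vacuous), which together with $\nu\ge 1$ justifies $\Me' S'=0$ in your first step.
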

\begin{proof}
The desired inequality is trivial for $\nu = 1$.
For all $1 < \nu \le 2$
it is a consequence of the Marcinkiewicz--Zygmund inequality
\[
\Me \Biggl[ \Bigg| \sum_{i=1}^m
\xi_i \Bigg|^\nu \Biggr] \le \alpha \Me \Biggl[ \Biggl( \sum
_{i=1}^m \xi_i^2
\Biggr)^{\nu/2} \Biggr] \le \alpha \Me \sum_{i=1}^m
|\xi_i|^\nu = \alpha \sum_{i=1}^m
\Me |\xi_i|^\nu.
\]
Here the first inequality is due to Marcinkiewicz and Zygmund
\cite[Theorem~13]{MZ1937}.
The second inequality follows from the fact that
for  $\nu\le 2$,
\begin{equation*}
\Biggl( \sum_{i=1}^m
\xi_i^2 \Biggr)^{\nu/2} \le \sum
_{i=1}^m |\xi_i|^\nu.
\qedhere
\end{equation*}
\end{proof}

\section{Generalized eigenvalue problem for positive semidefinite matrices}\label{sec:psdgep}

In this section we explain the relationship between the TLS
estimator and the generalized eigenvalue problem.
The results of this section are important
for constructing the TLS estimator.
Proposition~\ref{prop:gep6.7} is used to state the uniqueness
of the TLS estimator.

\begin{lem}\label{lem:psdgsvd1}
Let $A$ and $B$ be $n\times n$ symmetric positive semidefinite matrices,
with simultaneous diagonalization
\[
A = \bigl(T^{-1} \bigr)^\top \varLambda T^{-1},
\qquad B = \bigl(T^{-1} \bigr)^\top \mathrm{M}
T^{-1},
\]
with
\[
\varLambda = \diag(\lambda_1,\ldots,\lambda_n), \qquad
\mathrm{M} = \diag(\mu_1,\ldots,\mu_n)
\]
(see Theorem \ref{thm-gedpsp} for its existence).
For $i=1,\ldots,n$ denote
\[
\nu_i = \begin{cases}
      \lambda_i / \mu_i & \text{if $\mu_i>0$,} \\
      0 & \text{if $\lambda_i = 0$,} \\
      +\infty & \text{if $\lambda_i>0$, $\mu_i=0$.}
        \end{cases}
\]
Assume that $\nu_1 \le \nu_2 \le \cdots  \le \nu_n$.
Then
\begin{equation}
\label{eq-lem587} \nu_i = \min \bigl\{ \lambda\ge 0 \mathrel| \text{``}
\exists V, \ \dim V = i : (A - \lambda B)|_V \le 0
\text{''} \bigr\},\vadjust{\goodbreak}
\end{equation}
i.e., $\nu_i$ is the smallest number $\lambda \ge 0$, such that there exists
an $i$-dimensional subspace $V \subset \mathbb{R}^n$,
such that the quadratic form $A - \lambda B$
is negative semidefinite on $V$.
\end{lem}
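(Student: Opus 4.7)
The plan is to transform the problem into coordinates in which $A$ and $B$ are simultaneously diagonal, so that the condition $(A-\lambda B)|_V\le 0$ becomes transparent. Setting $y = T^{-1}x$ and $W := T^{-1}(V)$ (which is again $i$-dimensional since $T$ is nonsingular), one has $x^\top (A-\lambda B) x = y^\top (\Lambda - \lambda\mathrm{M}) y$, so the right-hand side of \eqref{eq-lem587} equals the smallest $\lambda\ge 0$ for which some $i$-dimensional $W\subset\mathbb{R}^n$ makes the diagonal form $\sum_k (\lambda_k - \lambda\mu_k) y_k^2$ nonpositive.

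For the inequality ``RHS $\le \nu_i$'', I assume $\nu_i < +\infty$ (otherwise nothing to prove on this side) and take $W = \colspan\langle e_1,\ldots,e_i\rangle$. For $k\le i$ one has $\nu_k\le\nu_i < +\infty$, so by the definition of $\nu_k$ either $\mu_k > 0$ with $\lambda_k = \nu_k\mu_k \le \nu_i\mu_k$, or $\mu_k = 0$ with $\lambda_k = 0$; in either case $\lambda_k - \nu_i\mu_k \le 0$, so $(\Lambda - \nu_i\mathrm{M})|_W \le 0$ and $\nu_i$ lies in the set on the right.

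For the converse inequality ``$\nu_i \le {}$RHS'', I take any $\lambda\ge 0$ admitting an $i$-dimensional $W$ with $(\Lambda - \lambda\mathrm{M})|_W \le 0$, and set $J_+ = \{j : \lambda_j - \lambda\mu_j > 0\}$. On $W^* := \colspan\langle e_j : j\in J_+\rangle$ the diagonal form $\Lambda - \lambda\mathrm{M}$ is positive definite, so $W\cap W^* = \{0\}$, whence $|J_+| = \dim W^* \le n - i$. Thus at least $i$ indices $j$ satisfy $\lambda_j - \lambda\mu_j \le 0$; using $\lambda_j \ge 0$ (positive semidefiniteness of $A$) to rule out $\mu_j = 0$, $\lambda_j > 0$, each such index gives $\nu_j \le \lambda$, and since the $\nu_j$ are sorted in ascending order this forces $\nu_i \le \lambda$. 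The case $\nu_i = +\infty$ is automatic from this direction: no finite $\lambda$ can lie in the set, so the min is $+\infty = \nu_i$.

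The main technical care lies in the bookkeeping of the degenerate indices, distinguishing $\mu_k = \lambda_k = 0$ (giving $\nu_k = 0$) from $\mu_k = 0$, $\lambda_k > 0$ (giving $\nu_k = +\infty$); both cases arise in the two directions and rely on positive semidefiniteness of $A$ and $B$. Apart from this, the argument is a direct dimension count via $W\cap W^* = \{0\}$ combined with the monotonicity of the sequence $\nu_j$.
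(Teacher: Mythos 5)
Your proof is correct and follows essentially the same route as the paper: the upper bound uses the span of the first $i$ (generalized) eigenvectors, and the lower bound is the same dimension-count argument, namely that an $i$-dimensional subspace on which $A-\lambda B$ is negative semidefinite must intersect trivially the span of the eigenvectors where the diagonalized form is positive definite. The only differences are cosmetic: you work in the coordinates $y=T^{-1}x$ and count the positive diagonal entries directly, whereas the paper argues by contradiction with the fixed subspace spanned by the columns $u_i,\ldots,u_n$ of $T$.
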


\begin{remforp}
$\nu_i < \infty$ if and only if
\[
\exists \lambda\ \exists V, \ \dim V = i : (A - \lambda B)|_V \le 0
.
\]
\end{remforp}

\begin{remforp}
Let $\nu_i<\infty$.
The minimum in (\ref{eq-lem587}) is attained
for $V$ being the linear span of first $i$ columns
of the matrix $T$
(i.e., the linear span of the eigenvectors
of the matrix pencil $\langle A, B \rangle$
that correspond to the $i$ smallest generalized eigenvalues).
That is
\[
(A - \nu_i B) |_V \le 0 \quad \text{for} \quad V =
\colspan \bigl\langle T ( \begin{smallmatrix} I_k
\\
0_{(n-k)\times k} \end{smallmatrix} ) \bigr\rangle  .
\]
\end{remforp}

In Propositions \ref{prop-l588}--\ref{prop:6.5} the following optimization problem
is considered.
For a fixed $(n+d)\times d$ matrix $X$ find an $m\times (n+d)$
matrix $\Delta$ where the constrained minimum is attained:
\begin{equation}
\label{TLS-fixX568} \begin{cases}
 \Delta \pinvb(\varSigma) \Delta^\top \to \min ;\\
\Delta \, (I - P_\varSigma) = 0; \\
(C - \Delta) X = 0 .
\end{cases}
\end{equation}
Here the matrix $X$ is assumed to be of full rank:
\begin{equation}
\label{X_frank581} \rank X = d .
\end{equation}

\begin{prop}
\label{prop-l588}
\mbox{1. }The constraints in (\ref{TLS-fixX568}) are compatible if and only if
\begin{equation}
\label{eq:nemset} \operatorname{span}\big\langle X^\top C^\top\big\rangle \subset
\operatorname{span}\big\langle X^\top \varSigma\big\rangle.
\end{equation}
Here $\colspana<M>$ is a column space of the matrix $M$.

\mbox{2. }Let the constraints in (\ref{TLS-fixX568}) be compatible.
Then the least element of the partially ordered set (in the \querymark{Q6}Loewner order)
$\{ \Delta \pinvb(\varSigma) \Delta^\top : \Delta \, (I{-}P_\varSigma) = 0
\allowbreak\; \text{and}\allowbreak\; (C{-}\Delta) X = 0\}$
is attained for $\Delta = C X \pinvp(X^\top \varSigma X) X^\top \varSigma$
and is equal to $C X \pinvp(X^\top \varSigma X) X^\top C^\top$.
This means the following:{\sloppy\par}

\mbox{2a. }%
For  $\Delta = C X \pinvp(X^\top \varSigma X) X^\top \varSigma$,
it holds that
\begin{align}
\label{eq:prop-l588-2a1} \Delta \, (I - P_\varSigma) &= 0, \qquad (C - \Delta) X = 0,
\\
\label{eq:prop-l588-2a2} \Delta \pinvb(\varSigma) \Delta^\top &= C X
\bigl(X^\top \varSigma X \bigr)^{\dagger} X^\top
C^\top;
\end{align}

\mbox{2b. }%
For any $\Delta$ which satisfies the constraints $\Delta \, (I - P_\varSigma) = 0$
and $(C - \Delta) X = 0$,
\begin{equation}
\label{eq:prop-l588-2b} \Delta \pinvb(\varSigma) \Delta^\top \ge C X
\bigl(X^\top \varSigma X \bigr)^{\dagger} X^\top
C^\top.
\end{equation}
\end{prop}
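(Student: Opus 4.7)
The plan is to reparametrize the constraint $\Delta(I - P_\varSigma) = 0$ as $\Delta = W\varSigma$ for some $W \in \mathbb{R}^{m\times(n+d)}$; this is an equivalence because $\varSigma$ is symmetric and $\varSigma P_\varSigma = \varSigma$ (the ``only if'' direction takes $W = \Delta \pinvb(\varSigma)$). The Moore--Penrose identity $\varSigma \pinvb(\varSigma) \varSigma = \varSigma$ then collapses the objective,
\[
\Delta \pinvb(\varSigma) \Delta^\top = W \varSigma \pinvb(\varSigma) \varSigma W^\top = W \varSigma W^\top ,
\]
and the second constraint $(C-\Delta)X = 0$ reads $W(\varSigma X) = CX$.

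\textbf{Parts 1 and 2a.} Solvability of the linear system $W(\varSigma X) = CX$ in $W$ is, by standard linear algebra, equivalent to $\colspana<X^\top C^\top> \subset \colspana<X^\top \varSigma>$, which is exactly \eqref{eq:nemset}; that settles Part 1. For Part 2a I take $W_* := CX \pinvp(X^\top \varSigma X) X^\top$, so that $\Delta_* := W_* \varSigma$ automatically satisfies $\Delta_*(I - P_\varSigma) = 0$. The identity $(C - \Delta_*)X = 0$ reduces to $CX \pinvp(X^\top \varSigma X)(X^\top \varSigma X) = CX$, which holds once the columns of $X^\top C^\top$ lie in $\colspana<X^\top \varSigma X>$. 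Here I would use $\colspana<X^\top \varSigma X> = \colspana<X^\top \varSigma>$, an instance of the classical identity $\colspana<AA^\top> = \colspana<A>$ applied to $A = X^\top \varSigma^{1/2}$. Equation \eqref{eq:prop-l588-2a2} then follows from $\varSigma \pinvb(\varSigma) \varSigma = \varSigma$ together with $\pinvp(M) M \pinvp(M) = \pinvp(M)$ for $M = X^\top \varSigma X$.

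\textbf{Part 2b (the main step).} For any admissible $W$, the matrix
\[
(W - W_*)\,\varSigma\,(W - W_*)^\top
\]
is positive semidefinite because $\varSigma$ is. Expanding it and using the shared constraint $W\varSigma X = W_*\varSigma X = CX$ shows that the two cross terms $W\varSigma W_*^\top$, $W_*\varSigma W^\top$ as well as $W_*\varSigma W_*^\top$ all reduce to $K := CX \pinvp(X^\top \varSigma X) X^\top C^\top$, so the inequality collapses to $W\varSigma W^\top \ge K$; translating back via $\Delta \pinvb(\varSigma) \Delta^\top = W\varSigma W^\top$ yields \eqref{eq:prop-l588-2b}.

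\textbf{Main obstacle.} The overall argument is algebraic, but the pseudo-inverse manipulations must be handled with care in the singular case, when $\varSigma$ or $X^\top \varSigma X$ is not invertible. In particular, the identity $\colspana<X^\top \varSigma X> = \colspana<X^\top \varSigma>$ and the three Moore--Penrose relations ($\varSigma P_\varSigma = \varSigma$, $\varSigma \pinvb(\varSigma) \varSigma = \varSigma$, $\pinvp(M) M \pinvp(M) = \pinvp(M)$) have to be verified without any invertibility shortcut. Once these are in hand, the $(W - W_*)\varSigma(W - W_*)^\top \ge 0$ trick bypasses any calculus of variations and delivers Loewner-minimality in one line.
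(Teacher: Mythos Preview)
Your proof is correct and follows essentially the same route as the paper. The paper works directly with $\Delta$ and expands $(\Delta-\Delta_{\rm pm})\pinvb(\varSigma)(\Delta-\Delta_{\rm pm})^\top\ge 0$, while you first reparametrize $\Delta=W\varSigma$ and expand $(W-W_*)\varSigma(W-W_*)^\top\ge 0$; since $(\Delta-\Delta_{\rm pm})\pinvb(\varSigma)(\Delta-\Delta_{\rm pm})^\top=(W-W_*)\varSigma\pinvb(\varSigma)\varSigma(W-W_*)^\top=(W-W_*)\varSigma(W-W_*)^\top$, these are the same computation, and the handling of Parts~1 and~2a via $\colspana<X^\top\varSigma X>=\colspana<X^\top\varSigma>$ and the projector identity $P_{X^\top\varSigma X}X^\top C^\top=X^\top C^\top$ is likewise identical.
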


\begin{remforp}\label{rem:6.2-1}
If the constraints are compatible, the least element (and the unique minimum)
is attained at a single point. Namely, the equalities
\begin{align*}
\Delta \, (I - P_\varSigma) &= 0, \qquad (C - \Delta) X = 0,
\\
\Delta \pinvb(\varSigma) \Delta^\top &= C X \bigl(X^\top
\varSigma X \bigr)^{\dagger} X^\top C^\top
\end{align*}
imply $\Delta = C X \pinvp(X^\top \varSigma X) X^\top \varSigma$.
\end{remforp}

\begin{prop}\label{prop:simpifdef}
Let the matrix pencil $\langle C^\top C, \varSigma\rangle$
be definite and \eqref{X_frank581} hold.
The constraints in (\ref{TLS-fixX568}) are compatible
if and only if the matrix $X^\top \varSigma X$
is nonsingular.
Then Proposition~\ref{prop-l588} still holds true
if $(X^\top \varSigma X)^{-1}$ is substituted
for $\pinvp(X^\top \varSigma X)$.\vadjust{\goodbreak}
\end{prop}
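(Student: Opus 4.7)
The plan is to first reduce the definiteness of the pencil $\langle C^\top C, \varSigma\rangle$ to invertibility of $C^\top C + \varSigma$, then prove the two directions of the compatibility--nonsingularity equivalence, and finally observe that the pseudoinverse collapses to the ordinary inverse. I would begin by noting that both $C^\top C$ and $\varSigma$ are symmetric positive semidefinite; by Remark~\ref{rem:remarkeqrank}, a PSD pencil is definite in the sense of Theorem~\ref{thm-gedpdp} if and only if $C^\top C + \varSigma$ is nonsingular. The direction ``$\Leftarrow$'' follows by taking $\alpha = \beta = 1$; the direction ``$\Rightarrow$'' uses the standard PSD argument, namely that if $(C^\top C + \varSigma) z = 0$ then $z^\top C^\top C z = z^\top \varSigma z = 0$, which forces $C^\top C z = \varSigma z = 0$, so no nonzero $z$ can survive in the kernel of any definite linear combination.

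For the easy direction of the equivalence, suppose $X^\top \varSigma X$ is nonsingular. Then $\rank(X^\top \varSigma) \ge \rank(X^\top \varSigma X) = d$; since $X^\top \varSigma$ has only $d$ rows, its column space must be all of $\mathbb{R}^d$, which trivially contains $\colspana<X^\top C^\top>$, giving the compatibility condition \eqref{eq:nemset}. Note that this direction requires neither the definiteness assumption nor \eqref{X_frank581}.

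For the converse, I would assume compatibility together with definiteness and argue by contradiction: suppose $X^\top \varSigma X$ is singular and pick $y \neq 0$ with $X^\top \varSigma X y = 0$. Then $(Xy)^\top \varSigma (Xy) = 0$, and since $\varSigma$ is PSD this forces $\varSigma X y = 0$; equivalently $y \perp \colspana<X^\top \varSigma>$. By the compatibility inclusion $\colspana<X^\top C^\top> \subset \colspana<X^\top \varSigma>$ we also have $y \perp \colspana<X^\top C^\top>$, so $CXy = 0$. Setting $z = X y$, the full-rank hypothesis \eqref{X_frank581} gives $z \neq 0$, yet $(C^\top C + \varSigma) z = 0$, contradicting the conclusion of the first paragraph.

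Under the equivalent conditions just established, $X^\top \varSigma X$ is an invertible $d \times d$ matrix, so $\pinvp(X^\top \varSigma X) = (X^\top \varSigma X)^{-1}$, and the statement of Proposition~\ref{prop-l588} carries over verbatim with this substitution (including the uniqueness of the minimizer from Remark~\ref{rem:6.2-1}). The only mildly subtle step is the PSD deduction $X^\top \varSigma X y = 0 \Rightarrow \varSigma X y = 0$, which I expect to be the single place where positive semidefiniteness is essential; everything else is bookkeeping with column spaces and ranks.
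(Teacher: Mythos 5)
Your proposal is correct and follows essentially the same route as the paper: both directions rest on Proposition~\ref{prop-l588} (compatibility $\Leftrightarrow$ \eqref{eq:nemset}), on the observation that for a positive semidefinite pencil definiteness means $C^\top C+\varSigma>0$, and on the full rank of $X$; the only difference is presentational, in that you prove necessity by exhibiting a kernel vector $Xy\neq 0$ of $C^\top C+\varSigma$ (a contradiction), whereas the paper runs the dual column-space chain $\mathbb{R}^d=\colspan\langle X^\top(C^\top C+\varSigma)X\rangle\subset\colspan\langle X^\top\varSigma X\rangle$. Your closing remark that the pseudoinverse then coincides with the inverse, so Proposition~\ref{prop-l588} and Remark~\ref{rem:6.2-1} carry over verbatim, is exactly the paper's conclusion.
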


\begin{prop}\label{prop:6.4}
Let $X$ be an $(n+d)\times d$ matrix which satisfies \eqref{X_frank581} and makes
the constraints in (\ref{TLS-fixX568}) compatible.
Then for $k=1,2,\ldots,d$,
\begin{align}
&\min_{\substack{\Delta (I - P_\varSigma) = 0\\
(C - \Delta) X = 0}} \lambda_{k+m-d} \bigl(\Delta \pinvb(
\varSigma) \Delta^\top \bigr)\notag
\\
&\quad= \min \bigl\{ \lambda\ge 0 : \mbox{``}\exists V{\subset} \colspana<X>,\; \dim
V{=}k : \bigl(C^\top C - \lambda \varSigma \bigr)|_V \le 0
\mbox{''} \bigr\}. \label{eq-prop64}
\end{align}
\end{prop}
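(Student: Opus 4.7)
The plan is to reduce the left-hand side of \eqref{eq-prop64} to a generalized eigenvalue problem for the $d \times d$ pencil $\langle A',B'\rangle$ with $A' = X^\top C^\top C X$ and $B' = X^\top \varSigma X$, and then invoke Lemma~\ref{lem:psdgsvd1} to recognise the right-hand side as the variational formula for its generalized eigenvalues. By Proposition~\ref{prop-l588}(2), among matrices $\Delta$ satisfying the constraints in \eqref{TLS-fixX568} the product $\Delta\pinvb(\varSigma)\Delta^\top$ attains a Loewner-least value
\[
M := C X \pinvp(X^\top \varSigma X) X^\top C^\top.
\]
Since the ordered eigenvalue $\lambda_{k+m-d}$ is monotone with respect to the Loewner order on symmetric matrices, the minimum on the left of \eqref{eq-prop64} equals $\lambda_{k+m-d}(M)$, so it suffices to compute that one eigenvalue.

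Next I would compare the spectrum of the $m\times m$ matrix $M$ with that of the $d\times d$ matrix $\pinvb(B')A'$. Factor $M = (CX)\cdot\pinvb(B')\cdot(X^\top C^\top)$; the standard identity that the nonzero eigenvalues (with multiplicity) of $PQ$ and $QP$ coincide gives that the nonzero spectrum of $M$ equals the nonzero spectrum of $\pinvb(B')A'$. Using Theorem~\ref{thm-gedpsp}, simultaneously diagonalize $A' = (T^{-1})^\top \varLambda T^{-1}$ and $B' = (T^{-1})^\top \mathrm{M} T^{-1}$ with $\varLambda = \diag(\lambda_i)$ and $\mathrm{M} = \diag(\mu_i)$; then $\pinvb(B')A' = T\pinvb(\mathrm{M})\varLambda T^{-1}$ is similar to the diagonal matrix whose $i$-th entry is $\lambda_i/\mu_i$ when $\mu_i>0$ and $0$ when $\mu_i=0$. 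The compatibility hypothesis $\colspana<X^\top C^\top>\subset \colspana<X^\top\varSigma>$ is equivalent to $\colspana<A'>\subset \colspana<B'>$ (since $\colspana<A'>=\colspana<X^\top C^\top>$ and $\colspana<B'>=\colspana<X^\top\varSigma>$), and it forces $\lambda_i=0$ whenever $\mu_i=0$. Hence the $d$ eigenvalues of $\pinvb(B')A'$, sorted in ascending order, coincide with the generalized eigenvalues $\nu_1\le\cdots\le\nu_d$ supplied by Lemma~\ref{lem:psdgsvd1} for the pair $(A',B')$. Because $\rank M\le d$ contributes at least $m-d$ zero eigenvalues to $M$, sorting yields
\[
\lambda_{k+m-d}(M) = \nu_k, \qquad k = 1,\ldots,d.
\]

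Finally, Lemma~\ref{lem:psdgsvd1} characterises $\nu_k$ as the smallest $\lambda\ge 0$ for which there exists a $k$-dimensional subspace $W\subset\mathbb{R}^d$ with $(A'-\lambda B')|_W\le 0$. The full-rank hypothesis $\rank X = d$ makes $w\mapsto Xw$ a linear bijection between $\mathbb{R}^d$ and $\colspana<X>$; under it, $k$-dimensional subspaces $W\subset\mathbb{R}^d$ are in one-to-one correspondence with $k$-dimensional subspaces $V = XW\subset \colspana<X>$, and the inequality $(A'-\lambda B')|_W\le 0$ transforms into $(C^\top C-\lambda\varSigma)|_V\le 0$ via $v = Xw$. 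Substituting this correspondence into Lemma~\ref{lem:psdgsvd1} reproduces the right-hand side of \eqref{eq-prop64}. The most delicate step is the bookkeeping in the middle paragraph: the compatibility hypothesis is precisely what excludes ``infinite'' generalized eigenvalues of $\langle A',B'\rangle$ (those with $\mu_i=0<\lambda_i$), and this exclusion is indispensable for aligning the top $d$ eigenvalues of $M$ with $\nu_1,\ldots,\nu_d$.
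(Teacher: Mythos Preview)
Your overall strategy matches the paper's proof exactly: reduce to the Loewner minimum $M=CX\pinvp(X^\top\varSigma X)X^\top C^\top$ via Proposition~\ref{prop-l588}, swap to the $d\times d$ side via the $PQ$/$QP$ eigenvalue identity, diagonalize the pencil $\langle A',B'\rangle$, invoke Lemma~\ref{lem:psdgsvd1}, and finish with the bijection $W\mapsto XW$. One step, however, is not justified as written. You assert
\[
\pinvb(B')A' \;=\; T\,\pinvb(\mathrm{M})\,\varLambda\,T^{-1},
\]
which presupposes $\pinvb(B')=T\,\pinvb(\mathrm{M})\,T^\top$. That identity does \emph{not} follow from Theorem~\ref{thm-gedpsp} alone: for a generic simultaneous diagonalization the Moore--Penrose inverse of $(T^{-1})^\top\mathrm{M}\,T^{-1}$ is not $T\,\pinvb(\mathrm{M})\,T^\top$ (try $T=\bigl(\begin{smallmatrix}1&1\\0&1\end{smallmatrix}\bigr)$, $\mathrm{M}=\diag(0,1)$). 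The paper closes this gap by choosing the diagonalization of Remark~\ref{rem:remark5.2-1} (the block $T_1$ spanning $\Kernel(B')$ is orthogonal to $T_2$), which is available here because compatibility gives $\Kernel(B')\subset\Kernel(A')$ and hence $\rank(A'+B')=\rank(B')$; Proposition~\ref{prop:forremark5.2-1} then yields $\pinvb(B')=T\,\pinvb(\mathrm{M})\,T^\top$ legitimately.

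Your observation that compatibility forces $\lambda_i=0$ whenever $\mu_i=0$ is correct for \emph{any} diagonalization (since $\mu_i=0\Rightarrow B'u_i=0\Rightarrow A'u_i=0$), so the values $\nu_i$ you write down are the right ones; what is missing is only the link between those $\nu_i$ and the spectrum of $\pinvb(B')A'$. Either cite Remark~\ref{rem:remark5.2-1} and Proposition~\ref{prop:forremark5.2-1} when you diagonalize, or replace the displayed formula by an intrinsic argument (e.g.\ restrict to $\colspana<B'>$, where $\pinvb(B')$ acts as a genuine inverse, and note that $\pinvb(B')A'$ vanishes on $\Kernel(B')$).
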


\begin{remforp}\label{remark:forprop6.4}
In the left-hand side of (\ref{eq-prop64}) the minima are
attained for the same $\Delta = C X \pinvp(X^\top \varSigma X) X^\top \varSigma$
for all $k$ (the $k$ sets where the minima are attained have
non-empty intersection;
we will show that the intersection comprises of a single element).

One can choose a stack of subspaces
\[
V_1 \subset V_2 \subset \cdots \subset V_d
= \colspana<X>
\]
such that $V_k$ is the element where the minimum in the right-hand side of (\ref{eq-prop64}) is attained, i.e.,
for all $k=1,\ldots,d$,
\begin{gather*}
\dim V_k = k, \qquad V_k \subset\colspana<X>, \qquad
\bigl(C^\top C - \nu_k \varSigma \bigr) |_{V_k}
\le 0,
\end{gather*}
with $\nu_k =
\min_{\substack{\Delta (I - P_\varSigma) = 0\\
(C - \Delta) X = 0}}
\lambda_{k+m-d} (\Delta \pinvb(\varSigma) \Delta^\top)$.
\end{remforp}

In Propositions \ref{prop:6.5} to \ref{prop:gep6.7},
we will use notation
from simultaneous diagonalization of matrices
$C^\top C$ and $\varSigma$:
\begin{equation}
\label{eq:diddecomposCCS} C^\top C = \bigl(T^{-1} \bigr)^\top
\varLambda T^{-1}, \qquad \varSigma = \bigl(T^{-1}
\bigr)^\top {\rm M} T^{-1},
\end{equation}
where
\begin{align*}
\varLambda &= \diag(\lambda_1,\ldots,\lambda_{n+d}), \qquad
{ \rm M} = \diag(\mu_1,\ldots,\mu_{n+d}),
\\
T &= [u_1, u_2,\ldots, u_d,\ldots,
u_{n+d}] .
\end{align*}
If Remark~\ref{rem:remark5.2-1} is applicable,
let the simultaneous diagonalization
be constructed accordingly.
For $k=1,\ldots,n{+}d$ denote
\[
\nu_i = \begin{cases}
      \lambda_k / \mu_k & \text{if $\mu_k>0$,} \\
      0 & \text{if $\lambda_k = 0$,} \\
      +\infty & \text{if $\lambda_k>0$, $\mu_k=0$.}
        \end{cases}
\]
Let $\nu_k$ be arranged in ascending order.

\begin{prop}\label{prop:6.5}
Let $X$ be an $(n+d)\times d$ matrix which
satisfies \eqref{X_frank581} and makes constraints
in \eqref{TLS-fixX568} compatible.
Then
\begin{equation}
\label{neq-lk-l669} \min_{\substack{\Delta \, (I - P_\varSigma) = 0\\
(C - \Delta) X = 0}} \lambda_{k+m-d} \bigl(
\Delta \pinvb(\varSigma) \Delta^\top \bigr) \ge \nu_k .
\end{equation}

If $\nu_d < \infty$, then for $X=[u_1,u_2,\ldots,u_d]$
the inequality in (\ref{neq-lk-l669}) becomes an equality.
\end{prop}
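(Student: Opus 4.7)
The plan is to reduce \eqref{neq-lk-l669} to two variational statements already established earlier in the section. Namely, Proposition~\ref{prop:6.4} rewrites the left-hand side of \eqref{neq-lk-l669} as a minimization over $k$-dimensional subspaces of $\colspana<X>$, while Lemma~\ref{lem:psdgsvd1} applied to the matrix pencil $\langle C^\top C,\varSigma\rangle$ rewrites $\nu_k$ as the analogous minimization over all $k$-dimensional subspaces of $\mathbb{R}^{n+d}$. Once both sides are cast in this form, the inequality becomes purely set-theoretic, and the equality part reduces to exhibiting a single witnessing subspace inside $\colspana<X>$.

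For the inequality, Proposition~\ref{prop:6.4} yields
\[
\min_{\substack{\Delta\,(I-P_\varSigma)=0\\(C-\Delta)X=0}}\lambda_{k+m-d}\bigl(\Delta\pinvb(\varSigma)\Delta^\top\bigr)
=\min\bigl\{\lambda\ge 0:\, \exists V\subset\colspana<X>,\ \dim V=k,\ (C^\top C-\lambda\varSigma)|_V\le 0\bigr\},
\]
while Lemma~\ref{lem:psdgsvd1} gives
\[
\nu_k=\min\bigl\{\lambda\ge 0:\, \exists V\subset\mathbb{R}^{n+d},\ \dim V=k,\ (C^\top C-\lambda\varSigma)|_V\le 0\bigr\}.
\]
The admissible family for the first minimum is a subfamily of that for the second, so the first minimum is at least $\nu_k$, which is exactly \eqref{neq-lk-l669}.

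For the equality assertion under $\nu_d<\infty$ with $X=[u_1,\ldots,u_d]$, I first check that this $X$ satisfies the hypotheses of Proposition~\ref{prop:6.4}: full rank is immediate from the invertibility of $T$, and the compatibility condition $\colspana<X^\top C^\top>\subset\colspana<X^\top\varSigma>$ from Proposition~\ref{prop-l588} holds because $\nu_d<\infty$ forces $\lambda_i=0$ whenever $\mu_i=0$ for $i\le d$. I then take $V=\colspana<u_1,\ldots,u_k>\subset\colspana<X>$ as the witnessing subspace, and verify $(C^\top C-\nu_k\varSigma)|_V\le 0$ directly from the diagonalization \eqref{eq:diddecomposCCS}: since $T^{-1}u_j=e_j$ one has $u_i^\top C^\top C u_j=\lambda_i\delta_{ij}$ and $u_i^\top\varSigma u_j=\mu_i\delta_{ij}$, whence for $v=\sum_{i=1}^k c_i u_i$,
\[
v^\top\bigl(C^\top C-\nu_k\varSigma\bigr)v=\sum_{i=1}^k c_i^2(\lambda_i-\nu_k\mu_i),
\]
and each coefficient $\lambda_i-\nu_k\mu_i$ is nonpositive by the ordering $\nu_i\le\nu_k$ (the degenerate case $\lambda_i=\mu_i=0$ contributes zero, while $\mu_i=0,\lambda_i>0$ is ruled out by $\nu_k\le\nu_d<\infty$). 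Combined with the inequality already established, this gives the equality in \eqref{neq-lk-l669}. The only mildly delicate point is the bookkeeping of the degenerate eigenvalues $\mu_i=0$ in both the compatibility check and the subspace construction, and the assumption $\nu_d<\infty$ is precisely what keeps those cases harmless.
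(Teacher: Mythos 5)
Your proposal is correct, and its first half coincides with the paper's argument: the paper likewise deduces \eqref{neq-lk-l669} by combining Proposition~\ref{prop:6.4} with Lemma~\ref{lem:psdgsvd1} applied to the pencil $\langle C^\top C,\varSigma\rangle$, so that the inequality reduces to comparing a minimum over $k$-dimensional subspaces of $\colspana<X>$ with the same minimum over all $k$-dimensional subspaces. Where you diverge is the equality part for $X=[u_1,\ldots,u_d]$. The paper establishes compatibility by exhibiting an explicit feasible matrix $\Delta = C\,T D T^{-1}$ with $D=\diag(d_1,\ldots,d_{n+d})$, $d_k=1$ iff $\mu_k>0$ and $k\le d$, and then evaluates the left-hand side exactly through Proposition~\ref{prop-l588}: the constrained least element is $CX\pinvp(X^\top\varSigma X)X^\top C^\top$, whose relevant eigenvalues are $\lambda_k\bigl(\pinvb(\mathrm{M}_d)\varLambda_d\bigr)=\nu_k$, giving equality for every $k$ simultaneously and identifying the minimizer. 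You instead verify compatibility via the column-space criterion \eqref{eq:nemset} (which is fine, though the step from ``$\mu_i=0\Rightarrow\lambda_i=0$ for $i\le d$'' to $\colspana<X^\top C^\top>\subset\colspana<X^\top\varSigma>$ tacitly uses $\colspana<X^\top C^\top>=\colspana<X^\top C^\top C X>=\colspana<\varLambda_d>$ and $\colspana<X^\top\varSigma>=\colspana<X^\top\varSigma X>=\colspana<\mathrm{M}_d>$, identities recorded elsewhere in the paper and worth citing), and then stay at the variational level: reusing Proposition~\ref{prop:6.4} and exhibiting the witness $V=\colspana<u_1,\ldots,u_k>$ with $(C^\top C-\nu_k\varSigma)|_V\le 0$, which together with the inequality forces equality. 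Your route is slightly lighter, avoiding the pseudo-inverse eigenvalue bookkeeping; the paper's computation buys a bit more, namely the explicit common minimizer $\Deltapm$, which is what Remark~\ref{remark:forprop6.4} and the later uniqueness arguments rely on.
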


\begin{cora}
In the minimization problem (\ref{eqTLS220}), the constrained minimum
is equal to
\[
\min_{\substack{\Delta \, (I-P_\varSigma) = 0\\
\rank(C-\Delta) \le n}} \lambda_{\max} \bigl( \Delta \pinvb(
\varSigma) \Delta^\top \bigr) = \nu_d .\vadjust{\goodbreak}
\]
\end{cora}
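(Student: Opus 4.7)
The plan is to reduce to Proposition~\ref{prop:6.5} with $k=d$ by reformulating the rank constraint $\rank(C-\Delta)\le n$ as the existence of a full-rank right annihilator $X$, and then noting that $\lambda_{k+m-d}$ collapses to $\lambda_{\max}$ when $k=d$.

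First I would establish the elementary equivalence: a matrix $\Delta$ satisfies $\rank(C-\Delta)\le n$ iff there exists $X\in\mathbb{R}^{(n+d)\times d}$ with $\rank X=d$ and $(C-\Delta)X=0$. One direction picks any basis of a $d$-dimensional subspace of $\Kernel(C-\Delta)$ as the columns of $X$; the other is immediate from $\Kernel(C-\Delta)\supset\colspana<X>$. This rewrites \eqref{eqTLS220} as a nested optimization
\[
\min_{\substack{X:\;\rank X=d}}\;\min_{\substack{\Delta\,(I-P_\varSigma)=0\\ (C-\Delta)X=0}} \lambda_{\max}\bigl(\Delta\pinvb(\varSigma)\Delta^\top\bigr),
\]
where the outer minimum ranges over those $X$ making the inner constraints compatible (equivalently, satisfying \eqref{eq:nemset}).

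Second, I would apply Proposition~\ref{prop:6.5} with $k=d$. Since $\Delta\pinvb(\varSigma)\Delta^\top$ is $m\times m$, the eigenvalue $\lambda_{k+m-d}$ with $k=d$ is precisely $\lambda_{\max}$; the proposition therefore gives the inner minimum $\ge \nu_d$ for every admissible $X$, so the original minimum is $\ge \nu_d$. For the matching upper bound I would plug in $X=[u_1,\ldots,u_d]$ from the simultaneous diagonalization \eqref{eq:diddecomposCCS}: this matrix has rank $d$ because $T$ is nonsingular, and when $\nu_d<\infty$ the second clause of Proposition~\ref{prop:6.5} immediately yields inner minimum $=\nu_d$ (and in particular guarantees compatibility for this $X$), giving the desired equality.

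The main obstacle is the degenerate case $\nu_d=+\infty$, which is not covered by the second clause of Proposition~\ref{prop:6.5}. Then $\mu_d=\cdots=\mu_{n+d}=0$, so $\rank\varSigma\le d-1$; a short check of the compatibility condition \eqref{eq:nemset} should show that no full-rank $(n+d)\times d$ matrix $X$ is admissible, so the feasible set of \eqref{eqTLS220} is empty and the infimum equals $+\infty=\nu_d$ by convention. Under the paper's standing hypothesis \eqref{cond:G3} this degenerate case never arises, since then $\rank\varSigma\ge d$ and the pencil $\langle C^\top C,\varSigma\rangle$ has at least $d$ finite generalized eigenvalues.
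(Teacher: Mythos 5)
Your argument is correct and is essentially the paper's own route: the corollary is read off from Proposition~\ref{prop:6.5} with $k=d$ (so that $\lambda_{k+m-d}=\lambda_{\max}$), combined with the equivalence between $\rank(C-\Delta)\le n$ and the existence of a rank-$d$ matrix $X$ with $(C-\Delta)X=0$, which is exactly the reduction used at the start of the proof of Proposition~\ref{prop:gs6.6}. The only soft spot is the degenerate case $\nu_d=+\infty$: there $\rank\varSigma\le d-1$ by itself does not rule out admissible $X$ via \eqref{eq:nemset} (for instance, any rank-$d$ $X$ with $CX=0$ would be admissible, so the check must also use $C^\top C$), and the cleaner argument is that a nonempty feasible set would make the already-proved lower bound $\nu_d$ finite --- but, as you note, this case is excluded anyway by the standing condition \eqref{cond:G3}.
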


\begin{prop}\label{prop:gs6.6}
In the minimization problem (\ref{eqTLS118}) the constrained minimum is
equal to
\[
\min_{\substack{\Delta (I-P_\varSigma) = 0\\
\rank(C-\Delta) \le n}} \big\| \bigl(\Delta \, \varSigma^{1/2}
\bigr)^{\dagger}\big\|_F = \sqrt{ \sum
_{k=1}^d \nu_k } .
\]

Whenever the minimum in (\ref{eqTLS118}) is attained for some matrix $\Delta$,
the minimum in (\ref{eqTLS220}) is attained for the same $\Delta$.
\end{prop}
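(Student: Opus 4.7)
The plan is to re-express the Frobenius objective as a trace and then bound it termwise using Proposition~\ref{prop:6.5}. First I observe that, because $\pinvp(\varSigma^{1/2})$ is symmetric and squares to $\pinvb(\varSigma)$, one has
\[
\bigl\|\Delta \pinvp(\varSigma^{1/2})\bigr\|_F^2 = \trace\bigl(\Delta \pinvb(\varSigma) \Delta^\top\bigr),
\]
and since $\Delta\pinvb(\varSigma)\Delta^\top$ is positive semidefinite, its trace equals the sum of its $m$ non-negative eigenvalues and is therefore bounded below by $\sum_{k=1}^d \lambda_{k+m-d}(\Delta\pinvb(\varSigma)\Delta^\top)$.

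For the lower bound on the value of \eqref{eqTLS118}, I take an arbitrary feasible $\Delta$: the constraint $\rank(C-\Delta)\le n$ lets me choose a full-rank $(n{+}d)\times d$ matrix $X$ with $(C-\Delta)X=0$, so the constraints of \eqref{TLS-fixX568} are compatible for that $X$. Proposition~\ref{prop:6.5} then gives $\lambda_{k+m-d}(\Delta\pinvb(\varSigma)\Delta^\top)\ge \nu_k$ for every $k=1,\ldots,d$, and summing yields $\|\Delta\pinvp(\varSigma^{1/2})\|_F^2\ge \sum_{k=1}^d \nu_k$.

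For attainment (in the nondegenerate case $\nu_d<\infty$), I would take $X_* = [u_1,\ldots,u_d]$ and let $\Delta_* = CX_* \pinvp(X_*^\top\varSigma X_*) X_*^\top\varSigma$, the Loewner-least choice produced by Proposition~\ref{prop-l588}. Since $\rank(\Delta_*)\le d$, its only possibly nonzero eigenvalues are $\lambda_{k+m-d}$ for $k=1,\ldots,d$, so the trace collapses to $\sum_{k=1}^d \lambda_{k+m-d}(\Delta_*\pinvb(\varSigma)\Delta_*^\top)$. Using the nested flag $V_k=\colspana<u_1,\ldots,u_k>\subset\colspana<X_*>$, on which the quadratic form $C^\top C-\nu_k\varSigma$ is diagonal with non-positive entries, the equality case of Proposition~\ref{prop:6.5} together with Remark~\ref{remark:forprop6.4} forces each $\lambda_{k+m-d}(\Delta_*\pinvb(\varSigma)\Delta_*^\top)$ to equal $\nu_k$. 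Thus $\sum_{k=1}^d \nu_k$ is attained by $\Delta_*$, and the formula for the minimum follows.

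For the second claim, if some $\Delta$ attains the Frobenius minimum, then every inequality in the chain $\sum_{k=1}^d\nu_k = \|\Delta\pinvp(\varSigma^{1/2})\|_F^2 = \trace(\Delta\pinvb(\varSigma)\Delta^\top) \ge \sum_{k=1}^d\lambda_{k+m-d}(\Delta\pinvb(\varSigma)\Delta^\top) \ge \sum_{k=1}^d\nu_k$ must be an equality. This forces $\lambda_k(\Delta\pinvb(\varSigma)\Delta^\top)=0$ for $k\le m-d$ and $\lambda_{k+m-d}(\Delta\pinvb(\varSigma)\Delta^\top)=\nu_k$ termwise; in particular $\lambda_{\max}(\Delta\pinvb(\varSigma)\Delta^\top)=\nu_d$, which by the corollary to Proposition~\ref{prop:6.5} is the minimum value of \eqref{eqTLS220}, so the same $\Delta$ also minimizes \eqref{eqTLS220}. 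The main subtlety I anticipate is verifying that the bounds of Proposition~\ref{prop:6.5} are saturated \emph{simultaneously} in $k$ at a single $\Delta_*$ rather than separately for each $k$; this is exactly what the nested-subspace construction in Remark~\ref{remark:forprop6.4} supplies, and it must be invoked uniformly across all $k=1,\ldots,d$.
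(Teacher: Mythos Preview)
Your proposal is correct and follows essentially the same route as the paper: rewrite the Frobenius objective as a trace, split the eigenvalue sum into the bottom $m-d$ (nonnegative) and top $d$ eigenvalues, bound the latter by $\nu_k$ via Proposition~\ref{prop:6.5}, and verify attainment at $\Delta_*=CX_*\pinvp(X_*^\top\varSigma X_*)X_*^\top\varSigma$ with $X_*=[u_1,\ldots,u_d]$; the equality-chain argument for the second claim is likewise identical to the paper's. The one point the paper treats that you skip is the degenerate case $\nu_d=+\infty$: there the feasible set in \eqref{eqTLS118} is empty (by contraposition to the second statement of Proposition~\ref{prop:6.5}), so the minimum is $+\infty$ and the formula still holds vacuously---you should state this explicitly rather than leaving ``the nondegenerate case'' as an aside.
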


\begin{prop}\label{prop:uniprop}
Let $\uin{M}$ be an arbitrary unitarily invariant
norm on $m\times n$ matrices.
Singular values of the matrix $M$ are arranged
in descending order and denoted $\sigma_i(M)$:
\[
\sigma_1(M) \ge \sigma_2(M) \ge \cdots \ge
\sigma_{\min(m,n)}(M) \ge 0 .
\]
Let $M_1$ and $M_2$ be $m\times n$ matrices. Then
\begin{enumerate}
\item
If $\sigma_i(M_1) \le \sigma_i(M_2)$ for all $i=1,\ldots,\min(m,n)$, then $\uin{M_1} \le \uin{M_2}$.
\item\label{prop:uniprop:part2}
If $\sigma_1(M_1) < \sigma_1(M_2)$ and $\sigma_i(M_1) \le \sigma_i(M_2)$ for all $i=2,\ldots,\min(m,n)$, then $\uin{M_1} < \uin{M_2}$.
\end{enumerate}
\end{prop}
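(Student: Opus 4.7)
My plan is to reduce the proposition to the classical fact that every unitarily invariant matrix norm is of the form $\uin{M} = \Phi(\sigma_1(M), \ldots, \sigma_{\min(m,n)}(M))$, where $\Phi$ is a norm on $\mathbb{R}^{\min(m,n)}$ invariant under coordinate permutations and sign-changes (a \emph{symmetric gauge function}, in the sense of von Neumann). With this representation in hand, both parts of the proposition become statements about the monotonicity of $\Phi$ on the non-negative orthant, which I would derive via a short duality argument.

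For Part~1, I would recall that the dual norm $\Phi^{*}$ is again a symmetric gauge function and that $\Phi(s) = \max_{\Phi^{*}(y) \le 1} \langle s, y \rangle$. Because $\Phi^{*}$ is sign-change invariant, whenever $s \ge 0$ the maximum is attained at some $y \ge 0$: replacing each coordinate of any candidate $y$ by its absolute value leaves $\Phi^{*}(y)$ unchanged and can only increase $\langle s, y \rangle$. Writing $s_i = \sigma_i(M_1)$ and $t_i = \sigma_i(M_2)$, with $s \ge 0$ and $t \ge s$ componentwise, for such a $y$ we obtain
\[
\Phi(t) \;\ge\; \langle t, y \rangle \;\ge\; \langle s, y \rangle \;=\; \Phi(s),
\]
which yields Part~1.

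For Part~2, I would refine the choice of the dual optimizer. Since singular values are listed in decreasing order, $s_1 = \max_i s_i$, so by the rearrangement inequality one may take the optimal $y$ to have its coordinates decreasingly ordered as well, $y_1 \ge y_2 \ge \cdots \ge 0$. If $s \ne 0$, then $\Phi(s) > 0$ forces $y \ne 0$, hence $y_1 > 0$, and
\[
\Phi(t) - \Phi(s) \;\ge\; \langle t - s, y \rangle \;\ge\; y_1 (t_1 - s_1) \;>\; 0,
\]
using $y_i \ge 0$, $t_i \ge s_i$ and $t_1 > s_1$. The degenerate case $s = 0$ is handled separately: $t_1 > 0$ gives $t \ne 0$, hence $\Phi(t) > 0 = \Phi(s)$.

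The only real point of care is the invocation of the symmetric gauge function representation, which is a classical theorem that can simply be cited; the remaining steps (attainment of the dual optimum in the non-negative orthant, rearrangement alignment of the optimal $y$ with $s$) are routine. No further obstacle is anticipated, since the hypotheses of Part~2 are exactly tailored so that the strict gain $y_1(t_1 - s_1)$ is guaranteed to be positive whenever $\Phi$ itself is nontrivial.
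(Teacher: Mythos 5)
Your proof is correct, but it runs along a genuinely different route from the paper's. Both arguments ultimately rest on von Neumann's representation of a unitarily invariant norm as a symmetric gauge function $\Phi$ of the singular values, but from there the paths diverge. For part~1 the paper simply cites Mirsky's inequality (2.5), i.e.\ the monotonicity of symmetric gauge functions on the nonnegative orthant, whereas you re-derive that monotonicity from scratch through the bidual identity $\Phi(s)=\max_{\Phi^{*}(y)\le 1}\langle s,y\rangle$ and the observation that the sign-invariance of $\Phi^{*}$ lets you take the dual optimizer $y\ge 0$. For part~2 the paper reduces strictness to the (non-strict) Ky Fan dominance theorem by a scaling trick: it shows the partial sums of $\sigma_i(M_1)$ are dominated by $c$ times those of $\sigma_i(M_2)$ with an explicit factor $c<1$ (built from $\sigma_1(M_2)>\sigma_1(M_1)$), so that $\uin{M_1}\le c\,\uin{M_2}<\uin{M_2}$; you instead obtain strictness directly, by aligning the dual optimizer decreasingly with $s$ via the rearrangement inequality and permutation invariance of $\Phi^{*}$, so that $\Phi(t)-\Phi(s)\ge y_1(t_1-s_1)>0$, with the degenerate case $s=0$ handled separately. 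Your approach buys a self-contained, duality-based proof needing only the gauge-function representation; the paper's buys brevity by leaning on the classical Fan/Mirsky dominance results and yields an explicit multiplicative gap between the two norms. All the steps you flag as routine (attainment of the dual maximum at a nonnegative, decreasingly ordered $y$, and $y_1>0$ when $s\neq 0$) are indeed sound, so no gap remains.
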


\begin{prop}\label{prop:gs6.6uin}
Consider the optimization problem \eqref{eqTLS118uin}
with arbitrary unitarily invariant norm $\uin{M}$.
Then
\begin{enumerate}
\item\label{prop:gs6.6uin:part1}
Any minimizer $\Delta$ to the optimization problem
\eqref{eqTLS118}
also minimizes
\eqref{eqTLS118uin}.
\item\label{prop:gs6.6uin:part2}
Any minimizer $\Delta$ to the optimization problem
\eqref{eqTLS118uin} also minimizes \eqref{eqTLS220}.
\end{enumerate}
\end{prop}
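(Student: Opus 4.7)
The plan is to compare the three optimization problems at the level of the singular values of $\Delta \pinvp(\varSigma^{1/2})$. Since $\Delta \pinvp(\varSigma^{1/2}) (\Delta \pinvp(\varSigma^{1/2}))^{\top} = \Delta \pinvb(\varSigma) \Delta^{\top}$, the nonzero eigenvalues of the $m \times m$ positive semidefinite matrix $\Delta \pinvb(\varSigma) \Delta^{\top}$ are the squares of the singular values of $\Delta \pinvp(\varSigma^{1/2})$. The first step is to secure a uniform lower bound on these singular values: any $\Delta$ satisfying $\Delta(I - P_\varSigma) = 0$ and $\rank(C - \Delta) \le n$ admits an $(n+d) \times d$ full-rank matrix $X$ with $(C - \Delta) X = 0$ (take $d$ linearly independent vectors in the right null space of $C - \Delta$), so Proposition~\ref{prop:6.5} applied with that $X$ yields
\[
\lambda_{k+m-d}\bigl(\Delta \pinvb(\varSigma) \Delta^{\top}\bigr) \ge \nu_k, \qquad k = 1, \ldots, d,
\]
which in singular-value language reads $\sigma_j(\Delta \pinvp(\varSigma^{1/2}))^2 \ge \nu_{d-j+1}$ for $j = 1, \ldots, d$.

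The second step sharpens this bound for any minimizer $\Delta^{*}$ of \eqref{eqTLS118}. By Proposition~\ref{prop:gs6.6}, $\|\Delta^{*} \pinvp(\varSigma^{1/2})\|_F^2 = \sum_{k=1}^d \nu_k$, which equals $\trace(\Delta^{*} \pinvb(\varSigma) \Delta^{*\top})$. Because the top $d$ eigenvalues are each bounded below by $\nu_k$ and the remaining $m-d$ eigenvalues are nonnegative, equality of the sums forces every termwise lower bound to be tight and each remaining eigenvalue to vanish. Hence the nonzero singular values of $\Delta^{*} \pinvp(\varSigma^{1/2})$ are exactly $\sqrt{\nu_d} \ge \cdots \ge \sqrt{\nu_1}$. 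Combined with the first step, for every feasible $\Delta$ and every $j$ one has $\sigma_j(\Delta \pinvp(\varSigma^{1/2})) \ge \sigma_j(\Delta^{*} \pinvp(\varSigma^{1/2}))$, so Proposition~\ref{prop:uniprop} part 1 yields $\uin{\Delta \pinvp(\varSigma^{1/2})} \ge \uin{\Delta^{*} \pinvp(\varSigma^{1/2})}$, proving Part~\ref{prop:gs6.6uin:part1}.

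For Part~\ref{prop:gs6.6uin:part2}, let $\Delta^{**}$ minimize \eqref{eqTLS118uin}. By Part~\ref{prop:gs6.6uin:part1}, $\uin{\Delta^{**} \pinvp(\varSigma^{1/2})} = \uin{\Delta^{*} \pinvp(\varSigma^{1/2})}$. The singular values of $\Delta^{**} \pinvp(\varSigma^{1/2})$ dominate those of $\Delta^{*} \pinvp(\varSigma^{1/2})$ termwise by the first step; if the largest were strictly larger, the strict monotonicity from Proposition~\ref{prop:uniprop}(\ref{prop:uniprop:part2}) would give $\uin{\Delta^{**} \pinvp(\varSigma^{1/2})} > \uin{\Delta^{*} \pinvp(\varSigma^{1/2})}$, contradicting the equality just established. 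Hence $\sigma_1(\Delta^{**} \pinvp(\varSigma^{1/2}))^2 = \nu_d$, i.e.\ $\lambda_{\max}(\Delta^{**} \pinvb(\varSigma) \Delta^{**\top}) = \nu_d$, which by the corollary to Proposition~\ref{prop:6.5} is the minimum value in \eqref{eqTLS220}.

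The main obstacle is the opening reduction: turning the rank inequality $\rank(C - \Delta) \le n$ into a fixed-$X$ constraint so that Proposition~\ref{prop:6.5} can be invoked for each individual feasible $\Delta$ (not only for the minimizer of the auxiliary fixed-$X$ problem), while keeping the index bookkeeping straight so that $\lambda_{k+m-d}$ of $\Delta \pinvb(\varSigma) \Delta^{\top}$ corresponds to $\sigma_{d-k+1}^2$ of $\Delta \pinvp(\varSigma^{1/2})$. Once this is in place, Part~1 is full monotonicity of $\uin{\cdot}$ in the singular value profile, and Part~2 is its strict monotonicity in the largest singular value.
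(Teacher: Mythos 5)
Your proposal is correct and follows essentially the same route as the paper: termwise domination of the singular values of $\Delta\,\pinvp(\varSigma^{1/2})$ via Proposition~\ref{prop:6.5} together with the equalities \eqref{eq0:L1605}--\eqref{eq0:L1606} (which you re-derive by the same trace argument used in the proof of Proposition~\ref{prop:gs6.6}), then monotonicity of the unitarily invariant norm for part~\ref{prop:gs6.6uin:part1} and the strict-monotonicity contraposition of Proposition~\ref{prop:uniprop}, part~\ref{prop:uniprop:part2}, for part~\ref{prop:gs6.6uin:part2}. The only cosmetic difference is that you work with an abstract Frobenius minimizer $\Delta^*$ where the paper uses the explicit $\Deltaem$; the argument is the same.
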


\begin{prop}\label{prop:gep6.7}
For any $\Delta$ where the minimum in (\ref{eqTLS118}) is attained
and the corresponding solution $\widehat X_{\rm ext}$
of the linear equations (\ref{eqTLSX124})
($\widehat X_{\rm ext}$ is an
$(n+d)\times d$ matrix of rank $d$),
it holds that
\begin{equation}
\label{neq0l709} \colspana<{u_i : \nu_i<
\nu_d}> \subset \colspana<\widehat X_{\rm ext}> \subset
\colspana<{u_i : \nu_i\le\nu_d}> .
\end{equation}

Conversely, if $\nu_d < +\infty$
and the matrix $\widehat X_{\rm ext}$
satisfies conditions (\ref{neq0l709}),
then there exists
a common solution $\Delta$ to the minimization problem (\ref{eqTLS118})
and the linear equations~(\ref{eqTLSX124}).
\end{prop}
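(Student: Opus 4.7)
The plan is to combine Propositions~\ref{prop-l588}, \ref{prop:6.4}, \ref{prop:6.5}, and~\ref{prop:gs6.6} with a subspace analysis based on the simultaneous diagonalization~\eqref{eq:diddecomposCCS}.

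For the forward direction, let $\Delta$ attain the minimum in~\eqref{eqTLS118} and let $\hxx$ be a rank-$d$ solution of~\eqref{eqTLSX124}. Since $(\Delta,\hxx)$ witnesses compatibility of the constraints in~\eqref{TLS-fixX568} with $X=\hxx$, Proposition~\ref{prop-l588} provides the Loewner-minimum $\Delta^{\star}:=C\hxx\,\pinvp(\hxx^\top\varSigma\hxx)\,\hxx^\top\varSigma$, and $\Delta^{\star}$ automatically satisfies $\rank(C-\Delta^{\star})\le n$. Frobenius optimality of $\Delta$ combined with the Loewner inequality $\Delta^{\star}\pinvb(\varSigma)\Delta^{\star\top}\le\Delta\pinvb(\varSigma)\Delta^\top$ forces the two PSD matrices to have equal trace, hence to coincide, and Remark~\ref{rem:6.2-1} then gives $\Delta=\Delta^{\star}$. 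In particular $\rank\Delta\le d$, so $\trace(\Delta\pinvb(\varSigma)\Delta^\top)=\sum_{k=1}^{d}\lambda_{k+m-d}(\Delta\pinvb(\varSigma)\Delta^\top)$. Proposition~\ref{prop:6.5} bounds each term below by $\nu_k$ while Proposition~\ref{prop:gs6.6} evaluates the trace as $\sum_{k=1}^{d}\nu_k$, so every bound is tight: $\lambda_{k+m-d}(\Delta\pinvb(\varSigma)\Delta^\top)=\nu_k$ for all $k$. By Proposition~\ref{prop:6.4} together with Lemma~\ref{lem:psdgsvd1}, this means the first $d$ generalized eigenvalues of the restricted pencil $\langle C^\top C|_V,\varSigma|_V\rangle$ on $V:=\colspana<\hxx>$ equal $\nu_1,\ldots,\nu_d$.

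From the equality of those generalized eigenvalues I deduce $\colspana<{u_i:\nu_i<\nu_d}>\subset V\subset\colspana<{u_i:\nu_i\le\nu_d}>$. For the lower inclusion I use the stack $V_1\subset\cdots\subset V_d=V$ from the remark after Proposition~\ref{prop:6.4} together with a simultaneous-diagonalization basis $v_1,\ldots,v_{d_<}$ of $V_{d_<}$ (Theorem~\ref{thm-gedpsp} with Remark~\ref{rem:remark5.2-1}), where $d_<:=|\{i:\nu_i<\nu_d\}|$: for $v_1$ one reads off $v_1^\top(C^\top C-\nu_1\varSigma)v_1=0$, and all coefficients $\lambda_i-\nu_1\mu_i$ are nonnegative and strictly positive for $\nu_i>\nu_1$, forcing $v_1\in\colspana<{u_i:\nu_i\le\nu_1}>$; the mutual $(A,B)$-orthogonality of the $v_k$'s then allows an inductive peeling argument yielding $V_{d_<}\subset\colspana<{u_i:\nu_i<\nu_d}>$, and since both sides have dimension $d_<$, the inclusion is an equality, so $\colspana<{u_i:\nu_i<\nu_d}>=V_{d_<}\subset V$. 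The upper inclusion $V\subset\colspana<{u_i:\nu_i\le\nu_d}>$ is the main obstacle, because the simple sign argument fails for $k>1$: components of $v_k$ on $\{u_i:\nu_i>\nu_d\}$ can a priori be compensated by components on $\{u_i:\nu_i<\nu_d\}$. To rule this out I intend to exploit the full strength of all $d$ equalities $\nu'_k=\nu_k$ together with the cross-orthogonality of the simultaneous-diagonalization basis of $V$ --- essentially the equality case of a generalized Cauchy interlacing theorem for the pencil $\langle C^\top C,\varSigma\rangle$.

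For the converse, assume $\nu_d<\infty$ and $\colspana<{u_i:\nu_i<\nu_d}>\subset V:=\colspana<\hxx>\subset\colspana<{u_i:\nu_i\le\nu_d}>$, and set $\Delta:=C\hxx\,\pinvp(\hxx^\top\varSigma\hxx)\,\hxx^\top\varSigma$. Compatibility of the constraints in~\eqref{TLS-fixX568} with $X=\hxx$ holds because any $u\in\ker(\varSigma\hxx)$ gives $\hxx u\in V\cap\ker\varSigma$, and every $u_i$ with $\nu_i\le\nu_d$ and $\mu_i=0$ must also satisfy $\lambda_i=0$ (otherwise $\nu_i=\infty>\nu_d$), so $\hxx u\in\ker C$ and $u\in\ker(C\hxx)$. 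Proposition~\ref{prop-l588} then delivers $\Delta(I-P_\varSigma)=0$ and $(C-\Delta)\hxx=0$, the latter implying $\rank(C-\Delta)\le n$. Decomposing $V=\colspana<{u_i:\nu_i<\nu_d}>\oplus V'$ with $V'\subset\colspana<{u_i:\nu_i=\nu_d}>$ and invoking the $(A,B)$-orthogonality of the two summands in the $u_i$-basis, the restricted pencil on $V$ has generalized eigenvalues $\nu_1,\ldots,\nu_{d_<}$ together with $d-d_<$ copies of $\nu_d$. Proposition~\ref{prop:forremark5.2-1} then yields
\[
\|\Delta\pinvp(\varSigma^{1/2})\|_F^{2}=\trace\bigl(\pinvp(\hxx^\top\varSigma\hxx)\,\hxx^\top C^\top C\hxx\bigr)=\sum_{k=1}^{d}\nu_k,
\]
which matches the minimum in Proposition~\ref{prop:gs6.6} and certifies $\Delta$ as a common solution to~\eqref{eqTLS118} and~\eqref{eqTLSX124}.
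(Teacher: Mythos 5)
Your converse direction is essentially sound, and it takes a somewhat different (and slightly slicker) route than the paper: you verify compatibility of the constraints in \eqref{TLS-fixX568} by a kernel argument ($\ker(\varSigma\hxx)\subset\ker(C\hxx)$, equivalent to \eqref{eq:nemset}) and then evaluate $\|\Delta\pinvp(\varSigma^{1/2})\|_F^2$ directly as a trace, whereas the paper reduces $T^{-1}\hxx$ to column echelon form, builds a modified basis $T_{\rm new}$, and argues through Proposition~\ref{prop:6.5} and the proof of Proposition~\ref{prop:gs6.6}. To make your version airtight you should still check the hypothesis $\rank(\hxx^\top C^\top C\hxx+\hxx^\top\varSigma\hxx)=\rank(\hxx^\top\varSigma\hxx)$ before invoking Proposition~\ref{prop:forremark5.2-1}, justify that a complement $V'$ of $\colspan\langle u_i:\nu_i<\nu_d\rangle$ inside $\colspan\langle\hxx\rangle$ can be chosen inside $\colspan\langle u_i:\nu_i=\nu_d\rangle$ (take $V'=\colspan\langle\hxx\rangle\cap\colspan\langle u_i:\nu_i=\nu_d\rangle$), and treat the degenerate case $\nu_d=0$, where the restricted pencil on $V'$ can have $0/0$ ``eigenvalues''; the paper handles that case separately.

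The forward direction, however, is not proved. Reaching the equalities $\lambda_{k+m-d}(\Delta\pinvb(\varSigma)\Delta^\top)=\nu_k$, $k=1,\dots,d$, and hence the statement that the minimum in \eqref{eq-prop64} over subspaces of $\colspan\langle\hxx\rangle$ equals $\nu_k$ for every $k$, is the easy part (it is the opening of the paper's proof, via \eqref{eq0:L1605}--\eqref{eq0:L1606} and Propositions~\ref{prop:6.4}--\ref{prop:6.5}). The real content of \eqref{neq0l709} is the passage from those equalities to the two inclusions, and that is exactly where you stop: for the upper inclusion you say you ``intend to exploit \dots\ essentially the equality case of a generalized Cauchy interlacing theorem,'' which is a declaration of intent, not an argument; and your ``inductive peeling'' for the lower inclusion has the same hole you yourself identify, since the sign argument is valid only at the bottom level $k=1$ --- already for $v_2$ the coefficients $\lambda_i-\nu_2\mu_i$ with $\nu_i<\nu_2$ are negative and can compensate positive contributions from indices with $\nu_i>\nu_2$, and $(A,B)$-orthogonality to $v_1$ alone does not rule this out. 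The paper closes precisely this gap with a nontrivial argument: the lower inclusion is proved by contradiction using the diagonal form $D(\lambda)$ on $\colspan\langle u_{k_*},\dots,u_{n+d}\rangle$, the characterization \eqref{eq:1707}, and Sylvester's law of inertia; the upper inclusion is then deduced from the lower one by observing that the set \eqref{eq:subspace3554} of vectors in $\colspan\langle u_{d_*},\dots,u_{n+d}\rangle$ on which $C^\top C-\nu_d\varSigma$ vanishes is a linear subspace (cf.\ \eqref{eq:L1722}) and by a dimension count against \eqref{eq:1707} with $k=d$. Until you supply an argument of this type --- an equality-case interlacing statement proved rather than invoked --- the necessity half of the proposition remains open in your write-up.
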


As a consequence, if $\nu_d < \nu_{d+1}$,
then (\ref{eqTLS118}) and (\ref{eqTLSX124})
unambiguously determine $\colspana<\widehat X_{\rm ext}>$
of rank $d$.

\begin{prop}
\label{prop-6eximl}
Let $\langle C^\top C, \varSigma\rangle$
be a definite matrix pencil.
Then for any $\Delta$ where the minimum in (\ref{eqTLS220})
is attained,
the corresponding solution $\widehat X_{\rm ext}$
of the linear equations (\ref{eqTLSX124})
(such that $\rank \widehat X_{\rm ext} = d$)
is a point where the minimum of the functional
\begin{equation}
X \mapsto \lambda_{\max} \bigl( \bigl(X^\top \varSigma X
\bigr)^{-1} X^\top C^\top C X \bigr), \quad X{\in}
\mathbb{R}^{(n+d)\times d},\quad X^\top \varSigma X{>}0, \label{eqf-l816}
\end{equation}
is attained.   It is also
a point where the minimum of
\begin{equation}
\label{eqf-l816mm} X \mapsto \lambda_{\max} \bigl( \bigl(X^\top
\varSigma X \bigr)^{-1} X^\top \bigl(C^\top C - m
\varSigma \bigr) X \bigr),
\end{equation}
is attained.
\end{prop}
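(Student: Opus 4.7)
The plan is to combine Proposition~\ref{prop:6.5} and its corollary (which identify the minimum in \eqref{eqTLS220} with the generalized eigenvalue $\nu_d$), Proposition~\ref{prop:6.4} with $k=d$ (which, for fixed admissible $X$, converts the inner minimum over $\Delta$ into a generalized Rayleigh quotient), and Proposition~\ref{prop-l588}/Remark~\ref{rem:6.2-1} (which exhibit the unique $\Delta^\ast$ attaining that inner minimum). The basic idea is to exchange the order of minimization: \eqref{eqTLS220} is a joint problem in $(\Delta, \widehat X_{\rm ext})$ through the constraint $(C-\Delta)\widehat X_{\rm ext} = 0$, and once we optimize over $\Delta$ first we obtain precisely the functional~\eqref{eqf-l816}.

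Before invoking those propositions I need a preliminary step: under the definite pencil assumption, $\widehat X_{\rm ext}^\top \varSigma \widehat X_{\rm ext}$ is nonsingular, so that \eqref{eqf-l816} is defined at $\widehat X_{\rm ext}$. This follows by contradiction. Suppose $y\ne 0$ with $(\widehat X_{\rm ext}^\top\varSigma \widehat X_{\rm ext})y=0$ and set $x=\widehat X_{\rm ext} y\ne 0$; positive semidefiniteness of $\varSigma$ gives $\varSigma x = 0$, and combining $\Delta(I-P_\varSigma)=0$ with $(C-\Delta)\widehat X_{\rm ext}=0$ yields $C x = \Delta x = \Delta P_\varSigma x = 0$, so $(\alpha C^\top C+\beta\varSigma)x=0$ for every $\alpha,\beta$, contradicting definiteness.

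Next, I pass to any admissible competitor $X$, i.e.\ any $(n+d)\times d$ matrix with $X^\top\varSigma X>0$. Such $X$ has full column rank $d$, and $\colspana<X^\top\varSigma X>=\mathbb{R}^d$ forces $\colspana<X^\top C^\top>\subset\colspana<X^\top\varSigma>$, so the constraints in \eqref{TLS-fixX568} are compatible. Apply Proposition~\ref{prop:6.4} with $k=d$: the only $d$-dimensional subspace of $\colspana<X>$ is itself, and $(C^\top C-\lambda\varSigma)|_{\colspana<X>}\le 0$ is equivalent to $\lambda\ge\lambda_{\max}((X^\top\varSigma X)^{-1}X^\top C^\top C X)$. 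Hence
\[
\min_{\substack{\Delta(I-P_\varSigma)=0\\ (C-\Delta)X=0}} \lambda_{\max}\bigl(\Delta\pinvb(\varSigma)\Delta^\top\bigr) = \lambda_{\max}\bigl((X^\top\varSigma X)^{-1}X^\top C^\top C X\bigr),
\]
and by Proposition~\ref{prop-l588} this minimum is realized by $\Delta^\ast = CX(X^\top\varSigma X)^{-1}X^\top\varSigma$. Because $\colspana<X>\subset\Kernel(C-\Delta^\ast)$ ensures $\rank(C-\Delta^\ast)\le n$, the matrix $\Delta^\ast$ is feasible for \eqref{eqTLS220}, and therefore the right-hand side above is at least $\nu_d$.

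Finally, I apply this to $X=\widehat X_{\rm ext}$ together with the given optimizer $\Delta$. On the one hand, $\Delta$ itself satisfies the constraints of the fixed-$X$ problem with $X=\widehat X_{\rm ext}$, so $\lambda_{\max}(\Delta\pinvb(\varSigma)\Delta^\top)$ bounds the inner minimum from above. On the other hand, $\lambda_{\max}(\Delta\pinvb(\varSigma)\Delta^\top)=\nu_d$ by optimality of $\Delta$. Chaining these with the lower bound from the previous paragraph gives
\[
\lambda_{\max}\bigl((\widehat X_{\rm ext}^\top\varSigma \widehat X_{\rm ext})^{-1}\widehat X_{\rm ext}^\top C^\top C\widehat X_{\rm ext}\bigr)=\nu_d,
\]
which coincides with the global lower bound established for every admissible $X$; hence $\widehat X_{\rm ext}$ minimizes \eqref{eqf-l816}. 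For \eqref{eqf-l816mm}, the algebraic identity $(X^\top\varSigma X)^{-1}X^\top(C^\top C-m\varSigma)X = (X^\top\varSigma X)^{-1}X^\top C^\top CX - m I$ shifts every eigenvalue by $-m$, so the two functionals share exactly the same set of minimizers. The main obstacle is the preliminary positivity step: without knowing $\widehat X_{\rm ext}^\top\varSigma\widehat X_{\rm ext}>0$ the fixed-$X$ problem cannot be reduced to a plain generalized Rayleigh quotient, and one cannot invoke Proposition~\ref{prop:simpifdef} to replace $\pinvp(X^\top\varSigma X)$ by its inverse.
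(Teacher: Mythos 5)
Your proof is correct and follows essentially the same route as the paper: both arguments fix $X$, reduce the inner minimization over $\Delta$ to the generalized Rayleigh quotient $\lambda_{\max}\bigl((X^\top\varSigma X)^{-1}X^\top C^\top C X\bigr)$ via Proposition~\ref{prop-l588} (you route this through Proposition~\ref{prop:6.4} with $k=d$, the paper through the eigenvalue-swap identity and Proposition~\ref{prop:simpifdef}), bound it below by $\nu_d$ using Proposition~\ref{prop:6.5}, show the value $\nu_d$ is attained at $X=\widehat X_{\rm ext}$ because the optimal $\Delta$ is feasible for the fixed-$X$ problem, and dispose of \eqref{eqf-l816mm} by the shift-by-$m$ identity. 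Your direct verification that $\widehat X_{\rm ext}^\top\varSigma\widehat X_{\rm ext}$ is nonsingular is a sound, self-contained replacement for the paper's appeal to Proposition~\ref{prop:simpifdef}.
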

The functional~\eqref{eqf-l816mm} equals the functional~(\ref{eqf-l816}) minus $m$.\vadjust{\goodbreak}



\section{Appendix: Proofs}\label{sec:apx:Proofs}
\subsection*{Detailed proofs of Theorems \ref{thm-3.1}--\ref{thm-3.3}}
\subsection{Bounds for eigenvalues of some matrices used in the proof}\label{ssapx-pr1}
\subsubsection{Eigenvalues of the matrix $C_0^\top C_0^{}$}
The $(n+d)\times(n+d)$ matrix $C_0^\top C_0^{}$
is symmetric and positive semidefinite.
Since $C_0 \xxtrue = A_0 \xtrue - B_0 = 0$,
the matrix $C_0^\top C_0^{}$ is rank deficient with eigenvalue 0
of multiplicity at least $d$.
As $A_0^\top A_0^{}$ is a $n\times n$ principal submatrix of $C_0^\top C_0^{}$,
\begin{equation}
\lambda_{d+1} \bigl(C_0^\top C_0
\bigr) \ge \lambda_{\min} \bigl(A_0^\top
A_0 \bigr) \label{neq418}
\end{equation}
by the Cauchy interlacing theorem (Theorem IV.4.2 from \cite{StewartSun} used $d$ times).

{\sloppy
Due to inequality (\ref{neq418}), if the matrix $A_0^\top A_0$ is
nonsingular, then $\lambda_{n+1}(C_0^\top C_0)>0$, whence
$\rank(C_0^\top C_0) = d$. If the conditions of Theorem \ref{thm-3.1},
\ref{thm-3.2} or \ref{thm-3.3} hold true, then $\lambda_{\min}
(A_0^\top A_0) \to \infty$, and thus
\[
\lambda_{d+1} \bigl(C_0^\top C_0
\bigr) \ge \lambda_{\min} \bigl(A_0^\top
A_0 \bigr) > 0
\]
for $m$ large enough.}

\begin{prop}\label{prop-mpdef}
If conditions \eqref{cond:G2}--\eqref{cond:G3}
hold true,
and conditions of either of Theorems
\ref{thm-3.1}, \ref{thm-3.2}, or \ref{thm-3.3} hold true,
then for $m$ large enough
$\langle C^\top C, \varSigma\rangle$
is a definite matrix pencil almost surely.
More specifically,
\[
\exists m_0 \; \forall m>m_0 :\; \Prob \bigl(
C^\top C + \varSigma > 0 \bigr) = 1 .
\]
\end{prop}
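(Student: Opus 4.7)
By Remark~\ref{rem:remarkeqrank}, applied to the positive semidefinite matrices $C^\top C$ and $\varSigma$, the matrix pencil $\langle C^\top C, \varSigma\rangle$ has full determinantal rank (i.e.\ is definite/regular of rank $n+d$) if and only if $C^\top C + \varSigma$ is nonsingular, equivalently positive definite. Hence it is enough to prove the stronger statement $\Prob(C^\top C + \varSigma > 0) = 1$ for all $m$ large enough. My plan is a three-step reduction, starting with the deterministic analysis of the kernel and ending with condition~\eqref{cond:G3}.

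First, I would reduce to a kernel intersection: since $C^\top C\ge 0$ and $\varSigma\ge 0$, a vector $v$ satisfies $(C^\top C + \varSigma) v = 0$ iff $v^\top C^\top C v = 0$ and $v^\top \varSigma v = 0$, iff $C v = 0$ and $\varSigma v = 0$. Thus the proposition is equivalent to
\[
\Kernel(C)\cap\Kernel(\varSigma) = \{0\} \quad \text{a.s.\ for all } m>m_0.
\]

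Second, I would eliminate the randomness on $\Kernel(\varSigma)$. If $\varSigma v = 0$, then $\mathrm{var}(\tilde c_i^\top v) = v^\top \varSigma v = 0$ and $\Me \tilde c_i^\top v = 0$ by \eqref{cond:G1}, so $\tilde c_i^\top v = 0$ a.s.\ for every $i$. Applying this to each vector in a fixed finite basis of $\Kernel(\varSigma)$ (there are finitely many $(i,j)$ pairs for each $m$), I get that a.s.\ $\widetilde C v = 0$ for every $v \in \Kernel(\varSigma)$. Consequently $C v = C_0 v + \widetilde C v = C_0 v$ on $\Kernel(\varSigma)$, so a.s.
\[
\Kernel(C)\cap\Kernel(\varSigma) \;=\; \Kernel(C_0)\cap\Kernel(\varSigma),
\]
and what remains is a deterministic linear-algebra statement.

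Third, I would pin down $\Kernel(C_0)$ exactly. Under the hypotheses of Theorems~\ref{thm-3.1}--\ref{thm-3.3}, $\lambda_{\min}(A_0^\top A_0)\to\infty$, so for $m$ large enough it is strictly positive; then inequality \eqref{neq418} yields $\lambda_{d+1}(C_0^\top C_0) > 0$, hence $\rank C_0 = n$ and $\dim\Kernel(C_0) = d$. Because $C_0 \xxtrue = 0$ by \eqref{eq:C0X0ext0} and $\xxtrue$ has rank $d$, the $d$-dimensional subspace $\colspana<\xxtrue>$ coincides with $\Kernel(C_0)$. Finally, any $v\in\colspana<\xxtrue>\cap\Kernel(\varSigma)$ can be written $v = \xxtrue w$ with $\varSigma\xxtrue w = 0$; condition \eqref{cond:G3} states $\rank(\varSigma\xxtrue) = d$, which forces $w = 0$ and hence $v = 0$. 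Combining the three steps gives the claim.

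The argument is essentially elementary; the only mild subtlety is the implicit quantifier in the second step, which is handled by working with a finite basis of $\Kernel(\varSigma)$ rather than treating uncountably many $v$ individually. Everything else follows by plugging known inequalities (\eqref{neq418}) and hypotheses (\eqref{cond:G3} and $\lambda_{\min}(A_0^\top A_0)\to\infty$) into the kernel decomposition.
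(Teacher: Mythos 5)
Your proof is correct and follows essentially the same route as the paper's: both rest on the observation that $\Me\tilde c_i=0$ and $\var(\tilde c_i^\top v)=v^\top\varSigma v=0$ force $\widetilde C$ to annihilate $\Kernel(\varSigma)$ almost surely (the paper phrases this as $\widetilde C F=0$ for a basis matrix $F$ of the kernel), combined with nonsingularity of $A_0^\top A_0$ for $m$ large and condition \eqref{cond:G3}. The deterministic part is merely organized differently: the paper proves by contradiction from \eqref{cond:G3} that the columns of $[I_n\;X_0]\,F$ are linearly independent and then left-multiplies $Cx=0$ by $A_0^\top$, whereas you identify $\Kernel(C_0)=\colspana<\xxtrue>$ via the interlacing bound \eqref{neq418} together with $C_0\xxtrue=0$ and then apply \eqref{cond:G3} directly as injectivity of $\varSigma\xxtrue$; both are valid, and your version makes the role of \eqref{cond:G3} slightly more transparent. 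One small ordering point: $\lambda_{d+1}(C_0^\top C_0)>0$ by itself only yields $\rank C_0\ge n$, and the equality $\dim\Kernel(C_0)=d$ additionally uses $C_0\xxtrue=0$ (equivalently $B_0=A_0X_0$), which you do invoke in the following sentence, so this is a matter of presentation rather than a gap.
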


\begin{proof}
{\it 1.\spacefactor=3000}
If the matrix $\varSigma$ is nonsingular, then Proposition \ref{prop-mpdef}
is obvious.  Due to condition~\eqref{cond:G3},
$\rank \varSigma \ge d$ (see Remark~\ref{rem:condG3}),
whence $\varSigma \neq 0$.
In what follows, assume that $\varSigma$ is a singular but non-zero matrix.
Let $F =  (\begin{smallmatrix} F_1 \\ F_2 \end{smallmatrix} )$
be a $(n+d) \times (n+d-\rank(\varSigma))$ matrix whose columns make the
basis of the null-space $\Kernel(\varSigma) = \{x : \varSigma x = 0\}$
of the matrix $\varSigma$.

\paragraph{2}
Now prove that columns of the matrix $[I_n\; X_0]\: F$ are linearly independent.
Assume the contrary.  Then for some
$v \in \mathbb{R}^{n+d-\rank{(\varSigma)}} \setminus \{0\}$,
\begin{align}
\nonumber
[I_n\quad X_0]\: F v &= 0,
\\
\nonumber
F_1 v &= - X_0 F_2 v,
\\
\label{eq-l617} F v &= \bigl( \begin{smallmatrix} X_0
\\
-I_d \end{smallmatrix} \bigr) F_2 v= X^0_{\rm ext}
F_2 v ,
\\
\label{eq-l617-4} 0&= \varSigma F v= \varSigma X^0_{\rm ext}
\cdot F_2 v .
\end{align}

Furthermore, $F v \neq 0$ because $v \neq 0$ and the columns of $F$ are linearly independent.
Hence, by (\ref{eq-l617}), $F_2 v \neq 0$.

Equality \eqref{eq-l617-4} implies that the columns of the matrix $\varSigma \xxtrue$ are linearly dependent,
and this contradicts condition~\eqref{cond:G3}.
The contradiction means that columns of the matrix $[I\; \xxtrue]\: F$ are linearly independent.

\paragraph{3}
If the conditions of either Theorem
 \ref{thm-3.1}, \ref{thm-3.2}, or \ref{thm-3.3}
hold true, then the matrix $A_0^\top A_0$ is positive definite
for $m$ large enough.

\paragraph{4}
Under conditions \eqref{cond:G2} and \eqref{cond:G1},
$\tilde{C} F = 0$ almost surely.
Indeed, $\Me \tilde{c}_i = 0$ and $\var [\tilde {c}_i F] = F^\top \varSigma F = 0$,
$i{=}1,2,\ldots,m$.

\paragraph{5}
It remains to prove the implication:
\[
\mbox{if} \quad A_0^\top A_0^{} >
0 \quad \mbox{and} \quad \tilde{C} F = 0, \quad \mbox{then} \quad
C^\top C + \varSigma > 0.
\]
The matrices $C^\top C$ and $\varSigma$ are positive semidefinite.
Suppose that $x^\top (C^\top C + \varSigma) x = 0$ and prove that $x=0$.
Since $x^\top (C^\top C + \varSigma) x = 0$, $C x =0$ and $\varSigma x = 0$.
The vector $x$ belongs to the null-space of the matrix $\varSigma$.
Therefore, $x = F v$ for some vector $v \in \mathbb{R}^{n+d-\rank {\varSigma}}$.
Then
\begin{align}
\label{eq:0psart5} 0 = A_0^\top C x &= A_0
(C_0 + \tilde C) x
\nonumber
\\
&= A_0 C_0 F v + A_0 \tilde C F v
\nonumber
\\
&= A_0^\top A_0^{} \:
[I_n\quad X_0]\: F v + 0 .
\end{align}
As the matrix $A_0^\top A_0^{}$ is nonsingular
and columns of the matrix $[I_n\; X_0]\: F$ are linearly independent,
the columns of the matrix $A_0^\top A_0^{} \: [I_n\; X_0]\: F$
are linearly independent as well.  Hence, \eqref{eq:0psart5} implies
$v = 0$, and so
$x = F v = 0$.

We have proved that the equality $x^\top (C^\top C + \varSigma) x = 0$
implies $x = 0$.
Thus, the positive semidefinite matrix $C^\top C + \varSigma$
is nonsingular, and so positive definite.
\end{proof}

\subsubsection{Eigenvalues and common eigenvectors of
$N$ and $N^{-\frac{\scriptstyle 1}{\scriptstyle2}} C_0^\top C_0^{} N^{-\frac{\scriptstyle 1}{\scriptstyle 2}}$}
The rank-deficient positive semidefinite symmetric matrix
$C_0^\top C_0$ can be factorized as:
\begin{align*}
C_0^\top C_0^{} &= U \diag \bigl(
\lambda_{\min} \bigl(C_0^\top C_0
\bigr), \lambda_2 \bigl(C_0^\top C_0
\bigr), \ldots, \lambda_{n+d} \bigl(C_0^\top
C_0 \bigr) \bigr) U^\top
\\
&= U \diag \bigl(\lambda_j \bigl(C_0^\top
C_0 \bigr); \; j=1,\ldots,n+d \bigr) U^\top,
\end{align*}
with an orthogonal matrix $U$ and
\[
\lambda_{\min} \bigl(C_0^\top C_0
\bigr) = \lambda_{2} \bigl(C_0^\top
C_0 \bigr) = \cdots = \lambda_d \bigl(C_0^\top
C_0 \bigr) = 0 .
\]

Then the eigendecomposition of the matrix
$N = C_0^\top C_0 + \lambda_{\min} (A_0^\top A_0) I$
is
\begin{displaymath}
N = U \diag \bigl(\lambda_j \bigl(C_0^\top
C_0 \bigr) + \lambda_{\min} \bigl(A_0^\top
A_0 \bigr); \; j=1,\ldots,n+d \bigr) U^\top .
\end{displaymath}
Notice that
\begin{equation}
\lambda_{\min}(N) = \cdots = \lambda_d(N) =
\lambda_{\min} \bigl(A_0^\top A_0
\bigr). \label{eql514}
\end{equation}
The matrix $N$ is nonsingular as soon as $A_0^\top A_0$ is nonsingular.
Hence, under the conditions of Theorem \ref{thm-3.1}, \ref{thm-3.2}, or \ref{thm-3.3},
the matrix $N$ is nonsingular for $m$ large enough.

Since $C_0 \xxtrue=0$, it holds that
\begin{equation}
N \xxtrue = \lambda_{\min} \bigl(A_0^\top
A_0 \bigr) \xxtrue . \label{eql523}
\end{equation}

As soon as $N$ is nonsingular,
the matrices
$N^{-1/2}$ and $N^{-1/2} C_0^\top C_0 N^{-1/2}$
have the eigendecomposition
\begin{align*}
N^{-1/2} &= U \diag \biggl( \frac{1}{\sqrt{\lambda_j(C_0^\top C_0) \,{+}\, \lambda_{\min} (A_0^\top A_0)}} ; \; j\,{=}\,1,\ldots,n\,{+}\,d
\biggr) U^\top ,
\\
N^{-1/2} C_0^\top C_0 N^{-1/2}
&= U \diag \biggl( \frac{\lambda_j(C_0^\top C_0)}{\lambda_j(C_0^\top C_0) + \lambda_{\min} (A_0^\top A_0)} ; \; j=1,\ldots,n{+}d \biggr)
U^\top .
\end{align*}

Thus, the eigenvalues of
$N^{-1/2}$ and $N^{-1/2} C_0^\top C_0 N^{-1/2}$
satisfy the following:
\begin{align}
\big\| N^{-1/2} \big\| = \lambda_{\max} \bigl(N^{-1/2} \bigr) &=
\frac{1}   {
\sqrt{\lambda_{\min}(A_0^\top A_0)}}; \label{eq-l549}
\\
\label{eq:NCCNeig} \lambda_j \bigl(N^{-1/2}
C_0^\top C_0 N^{-1/2} \bigr) &= 0,
\quad j=1,\ldots,d;
\\
\label{neq:NCCNeig} {\textstyle\frac{1}2} \le \lambda_j
\bigl(N^{-1/2} C_0^\top C_0
N^{-1/2} \bigr) &\le 1, \quad j=d{+}1,\ldots,n{+}d .
\end{align}
As a result,
\begin{equation}
{\textstyle\frac{1}2} n \le \trace \bigl(N^{-1/2}
C_0^\top C_0^{} N^{-1/2}
\bigr) \le n. \label{eq:traceN-hC0C0N-h}
\end{equation}
Because $\trace (C_0^{} N^{-1} C_0^\top) =
\trace (C_0^{} N^{-1/2} N^{-1/2} C_0^\top) =
\trace (N^{-1/2} C_0^\top C_0^{} N^{-1/2})$,
\begin{equation}
{\textstyle\frac{1}2} n \le \trace \bigl(C_0
N^{-1} C_0^\top \bigr) \le n . \label{neq-1282}
\end{equation}
These properties 
will be
used in Sections \ref{ss:useGEPpert} and \ref{ssapx-pr3}.

\subsection{Use of eigenvector perturbation theorems}\label{ss:useGEPpert}
\subsubsection{Univariate regression ($d=1$)}
Remember inequalities \eqref{eql514} (whence \eqref{neq-l549} follows) and
\eqref{eql523}:
\begin{gather}
\hxx^\top N \hxx \ge \lambda_{\min} \bigl(A_0^\top
A_0 \bigr) \hxx^\top \hxx ; \label{neq-l549}
\\
\nonumber
N \xxtrue = \lambda_{\min} \bigl(A_0^\top
A_0 \bigr) \xxtrue .
\end{gather}
Then
\begin{align}
\nonumber
\frac{(\hxx^\top \xxtrue)^2}{\hxx^\top \hxx \cdot \xxtruetra \xxtrue} &\ge \frac{(\hxx^\top N \xxtrue)^2}   {
\hxx^\top N \hxx \cdot \xxtruetra N \xxtrue},
\\
\nonumber
\cos^2 \angle\bigl(\hxx, \xxtrue\bigr) &\ge \cos^2 \angle
\bigl(N^{1/2} \hxx, N^{1/2} \xxtrue \bigr),
\\
\label{neq:sinescal} \sin^2 \angle\bigl(\hxx, \xxtrue\bigr) &\le \sin^2
\angle \bigl(N^{1/2} \hxx, N^{1/2} \xxtrue \bigr).
\end{align}

Now, apply Lemma~\ref{lem-pertrv0} on the perturbation bound for the
minimum-eigenvalue eigenvector. The unperturbed symmetric matrix
is $N^{-1/2} C_0^\top C_0 N^{-1/2}$, satisfying
\begin{align*}
\lambda_{\min} \bigl(N^{-1/2} C_0^\top
C_0 N^{-1/2} \bigr) &= 0,
\\
N^{-1/2} C_0^\top C_0 N^{-1/2}
N^{1/2} \xxtrue &= 0,
\\
\lambda_2 \bigl(N^{-1/2} C_0^\top
C_0 N^{-1/2} \bigr) &\ge {\textstyle\frac{1}2} .
\end{align*}
The null-vector of the unperturbed matrix is $N^{-1/2} \xxtrue$.\vadjust{\goodbreak}

The column vector $\hxx$ is a generalized eigenvector of the matrix
pencil $\langle C^\top C, \break \varSigma\rangle$.
Denote the corresponding eigenvalue by $\lambda_{\min}$. Thus,
\[
C^\top C \hxx = \lambda_{\min} \cdot \varSigma \hxx .
\]
The perturbed matrix is $N^{-1/2} (C^\top C - m \varSigma) N^{-1/2}$;
the minimum eigenvalue of the matrix pencil
$\langle N^{-1/2} (C^\top C - m \varSigma) N^{-1/2},\allowbreak \; N^{-1/2} \varSigma N^{-1/2}\rangle$
is equal to
$\lambda_{\min} - m$,
and the eigenvector is $N^{1/2} \hxx$:
\[
N^{-1/2} \bigl(C^\top C - m \varSigma \bigr) N^{-1/2}
N^{1/2} \hxx = (\lambda_{\min} - m) N^{-1/2} \varSigma
N^{-1/2} N^{1/2} \hxx .
\]

We have to verify that $N^{-1/2} \varSigma N^{-1/2} N^{1/2} \xxtrue \neq 0$;
this follows from condition~\eqref{cond:G3}.
Obviously, the matrix $N^{-1/2} \varSigma N^{-1/2}$ is positive semidefinite:
\begin{equation}
\label{neq:NSNpsd} N^{-1/2} \varSigma N^{-1/2} \ge 0.
\end{equation}

Denote
\[
\epsilon = \big\| N^{-1/2} \bigl(C^\top C - m \varSigma \bigr)
N^{-1/2} - N^{-1/2} C_0^\top
C_0 N^{-1/2} \big\| .
\]

By Lemma \ref{lem-pertrv0}
\[
\sin^2 \angle \bigl(N^{1/2} \hxx, N^{1/2} \xxtrue
\bigr) \le \frac{\epsilon}{0.5} \biggl(1 + \frac{\xxtruetra N \xxtrue}{\xxtruetra \varSigma \xxtrue} \cdot
\frac{\hxx^\top \varSigma \hxx}{\hxx^\top N \hxx} \biggr).
\]
Use (\ref{eql523}) and (\ref{neq-l549}) again, and also use \eqref{neq:sinescal}:
\begin{align}
\sin^2 \angle\bigl(\hxx, \xxtrue\bigr) & \le \sin^2 \angle
\bigl(N^{1/2} \hxx, N^{1/2} \xxtrue \bigr)
\nonumber
\\
& \le 2 \epsilon \biggl(1 + \frac{\xxtruetra \xxtrue}{\xxtruetra \varSigma \xxtrue} \cdot \frac{\hxx^\top \varSigma \hxx}{\hxx^\top \hxx} \biggr)
\nonumber
\\
& \le 2 \epsilon \biggl(1 + \frac{\xxtruetra \xxtrue \cdot \|\varSigma\|}               {
\xxtruetra \varSigma \xxtrue} \biggr) . \label{neq:maxsinbouni2}
\end{align}

\subsubsection{Multivariate regression ($d\ge 1$)}
What follows is valid for both univariate ($d = 1$) and multivariate ($d > 1$) regression.

Due to \eqref{eql514}, $N \ge \lambda_{\min}(A_0^\top A_0) I$
in the Loewner order; thus inequality (\ref{neq-l549})
holds in the Loewner order. Hence
\begin{align*}
\forall v\in\mathbb{R}^d \setminus \{0\} : \, &\frac{v^\top \hxx^\top \xxtrue (\xxtruetra \xxtrue)^{-1} \xxtruetra
       \hxx v}      {
v^\top \hxx^\top \hxx v}
\\
&\quad\ge \lambda_{\min} \bigl(A_0^\top A_0
\bigr) \frac{v^\top \hxx^\top \xxtrue (\xxtruetra \xxtrue)^{-1} \xxtruetra
       \hxx v}      {
v^\top \hxx^\top N \hxx v}.
\end{align*}
With inequality (\ref{eql523}), we get
\begin{align*}
\nonumber
&\frac{v^\top \hxx^\top \xxtrue (\xxtruetra \xxtrue)^{-1} \xxtruetra
       \hxx v}      {
v^\top \hxx^\top \hxx v}
\\
&\quad\ge \frac{v^\top N \hxx^\top \xxtrue (\xxtruetra N \xxtrue)^{-1} \xxtruetra
       N \hxx v}      {
v^\top \hxx^\top N \hxx v}.
\end{align*}
Using equation \eqref{eq:1-sin2} to determine the sine
and noticing that
\begin{align*}
P_{\xxtrue} &= \xxtrue \bigl(\xxtruetra \xxtrue\bigr)^{-1} \xxtruetra,
\\
P_{N^{1/2} \xxtrue}& = N^{1/2} \xxtrue \bigl(\xxtruetra N \xxtrue\bigr)^{-1}
\xxtruetra N^{1/2},
\end{align*}
we get
\begin{gather}
\nonumber
1 - \big\|\sin\angle \bigl(\hxx, \xxtrue\bigr)\big\|^2 \ge 1 - \big\|\sin\angle
\bigl(N^{1/2}\hxx, N^{1/2}\xxtrue \bigr)\big\|^2,
\\
\label{neq:maxsinescal} \big\|\sin\angle \bigl(\hxx, \xxtrue\bigr)\big\| \le \big\|\sin\angle
\bigl(N^{1/2}\hxx, N^{1/2}\xxtrue \bigr)\big\|.
\end{gather}

The TLS estimator $\hxx$ is defined as a solution
to the linear equations \eqref{eqTLSX124} for $\Delta$
that brings the minimum to \eqref{eqTLS118}.
By Proposition~\ref{prop:gs6.6}, the same $\Delta$
brings the minimum to \eqref{eqTLS220}.
By Proposition~\ref{prop-6eximl},
the functions \eqref{eqf-l816} and \eqref{eqf-l816mm}
attain their minima at the point $\hxx$.
Therefore, the minimum of the function
\begin{equation}\label{eq:2300}
M \mapsto \lambda_{\max} \bigl( \bigl(M^\top N^{-1/2}
\varSigma N^{-1/2} M \bigr)^{-1} M^\top
N^{-1/2} \bigl(C^\top C - m \varSigma \bigr) N^{-1/2} M
\bigr)
\end{equation}
is attained for $M = N^{1/2} \hxx$.

{\sloppy
Now, apply Lemma~\ref{lem-pertrVV0} on perturbation bounds
for a generalized invariant subspace.
The unperturbed matrix (denoted $A$ in
Lemma~\ref{lem-pertrVV0}) is $N^{-1/2} C_0^\top C_0 N^{-1/2}$; its
nullspace is the column space of the matrix $N^{1/2} \xxtrue$
(which is denoted $X_0$ in Lemma~\ref{lem-pertrVV0}). The
perturbed matrix ($A + \tilde A$ in Lemma~\ref{lem-pertrVV0}) is
$N^{-1/2} (C^\top C - m \varSigma) N^{-1/2}$.
The matrix $B$ in Lemma~\ref{lem-pertrVV0} equals
$N^{-1/2} \varSigma N^{-1/2}$.
The norm of the perturbation is
denoted $\epsilon$  (it is $\|\tilde A\|$  in Lemma~\ref{lem-pertrVV0}).
The $(n+d)\times d$ matrix which brings the minimum to \eqref{eq:2300} is $N^{1/2} \hxx$.
\querymark{Q7}The other conditions of Lemma~\ref{lem-pertrVV0}
are \eqref{eq:NCCNeig}, \eqref{neq:NCCNeig}, and \eqref{neq:NSNpsd}.
We have
\begin{align*}
&\big\|\sin\angle \bigl(N^{1/2} \hxx,N^{1/2}\xxtrue \bigr)
\big\|^2
\\
&\quad\le \frac{\epsilon}{0{.}5} \bigl(1+ \big\| N^{-1/2} \varSigma N^{-1/2}
\big\|\, \lambda_{\max} \bigl(\bigl(\xxtruetra \varSigma \xxtrue\bigr)^{-1}
\xxtruetra N \xxtrue \bigr) \bigr) .
\end{align*}}\relax

Again, with \eqref{neq:maxsinescal}, \eqref{eql523} and \eqref{eq-l549},
we have
\begin{align}
\nonumber
& \big\|\sin\angle\bigl(\hxx, \xxtrue\bigr)\big\|^2 \\
&\quad\le \big\|\sin\angle
\bigl(N^{1/2} \hxx,N^{1/2}\xxtrue \bigr)\big\|^2\notag
\\
\nonumber
&\quad \le 2\epsilon \biggl( 1 + \frac{\|\varSigma\|}{\lambda_{\min}(A_0^\top A_0)}\, \lambda_{\max}
\bigl( \lambda_{\min} \bigl(A_0^\top
A_0 \bigr) \bigl(\xxtruetra \varSigma \xxtrue\bigr)^{-1} \xxtruetra
\xxtrue \bigr) \biggr)
\nonumber
\\
&\quad = 2\epsilon \bigl( 1 + \|\varSigma\|\, \lambda_{\max} \bigl(\bigl(
\xxtruetra \varSigma \xxtrue\bigr)^{-1} \xxtruetra \xxtrue \bigr) \bigr) .
\label{neq:maxsinbomul2}
\end{align}

\subsection{Proof of the convergence $\epsilon\to0$}
\label{ssapx-pr3}
In this section, we prove the convergences
\begin{align*}
M_1 &= N^{-1/2} C_0^\top \widetilde C
N^{-1/2} \to 0,
\\
M_2 &= N^{-1/2} \bigl(\widetilde C^\top \widetilde
C - m \varSigma \bigr) N^{-1/2} \to 0
\end{align*}
in probability for Theorem \ref{thm-3.1}, and almost surely for
Theorems \ref{thm-3.2} and \ref{thm-3.3}.
As $\epsilon = \|M_1^{} + M_1^\top + M_2\|$, the convergences $M_1 \to 0$
and $M_2 \to 0$ imply $\epsilon \to 0$.

\begin{proof}[End of the proof of Theorem \ref{thm-3.1}.]
It holds that
\begin{align*}
\| M_1 \|_F^2 &= \big\| N^{-1/2}
C_0^\top \tilde C N^{-1/2} \big\|_F^2
= \trace \bigl( N^{-1/2} C_0^\top \tilde C
N^{-1} C_0 \tilde C^\top N^{-1/2} \bigr)
\\
&= \trace \bigl(C_0^{} N^{-1}
C_0^\top \tilde C N^{-1} \tilde C^\top
\bigr) = \sum_{i=1}^m \sum
_{j=1}^m c^0_{i}
N^{-1} \bigl(c^0_{j} \bigr)^\top
\tilde c_j N^{-1} \tilde c_i^\top .
\end{align*}
The right-hand side can be simplified since $\Me \tilde c_j N^{-1} \tilde c_i^\top = 0$ for $i\neq j$ and\break
$\Me \tilde c_i N^{-1} \tilde c_i^\top = \trace (\varSigma N^{-1})$:
\[
\Me \|M_1\|_F^2 = \sum
_{i=1}^m c_{0i} N^{-1}
c_{0i}^\top \trace \bigl(\varSigma N^{-1} \bigr) =
\trace \bigl(C_0 N^{-1} C_0^\top
\bigr) \trace \bigl(\varSigma N^{-1} \bigr) .
\]
The first multiplier in the right-hand side is bounded due to \eqref{neq-1282}
as $\trace (C_0 N^{-1} C_0^\top) \le n$, for $m$ large enough.
Now, construct an upper bound for the second multiplier:
{\sloppy
\begin{align*}
\trace \bigl(\varSigma N^{-1} \bigr) &= \big\| N^{-1/2}
\varSigma^{1/2} \big\|_F^2 \le \big\|N^{-1/2}
\big\|^2 \big\|\varSigma^{1/2}\big\|_F^2 =
\lambda_{\max} \bigl(N^{-1} \bigr) \trace \varSigma
\\
&= \frac{\trace \varSigma}{\lambda_{\min}(N)} = \frac{\trace \varSigma}{\lambda_{\min}(A_0^\top A_0^{})}.
\end{align*}}%

Finally,
\[
\Me \|M_1\|_F^2 \le \frac {n \trace \varSigma}{
\lambda_{\min} (A_0^\top A_0)}.
\]

The conditions of Theorem~\ref{thm-3.1} imply
that $\lambda_{\max} (A_0^\top A_0) \to \infty$;
therefore, $M_1 \probto 0$ as $m\to\infty$.
{\sloppy\par}

Now, we prove that $M_2 \probto 0$ as $m{\to}\infty$.
We have
\begin{align}
\nonumber
M_2 &= N^{-1/2} \bigl(\tilde C^\top
\tilde C - m \varSigma \bigr) N^{-1/2} ,
\\
\label{neq-1308} \| M_2 \| &\le \big\|N^{-1/2}\big\| \, \big\|\tilde
C^\top \tilde C - m \varSigma\big\| \, \big\|N^{-1/2}\big\| =
\frac{ \llVert  \sum_{i=1}^m ( \tilde c_i^\top \tilde c_i^{} - \varSigma)  \rrVert }     {
\lambda_{\min} (A_0^\top A_0^{})} .
\end{align}
Now apply the Rosenthal inequality (case $1 \le \nu \le 2$;
Theorem \ref{thm-RosenNeq-12}) to construct a bound for $\Me\|M_2\|^r$:
\[
\Me \|M_2\|^r \le \frac{{\rm const}
\sum_{i=1}^m \Me \| \tilde c_i^\top  \tilde c_i^{} - \varSigma\|^r}{
\lambda_{\min}^r (A_0^\top A_0^{})} .
\]
By the conditions of Theorem~\ref{thm-3.1}, the sequence
$\{ \Me \|\tilde c_i^\top \tilde c_i^{} - \varSigma\|^r, \allowbreak
\ i=1,2,\ldots\}$ is bounded.  Hence
\begin{align*}
\Me \|M_2\|^r &\le \frac{O(m)}{
\lambda_{\min}^r (A_0^\top A_0^{})} \quad\mbox {as}\ m
\to\infty,
\\
\Me \|M_2\|^r &\to 0 \quad\mbox{and}\quad M_2
\probto 0 \quad\mbox {as}\ m\to\infty. \qedhere
\end{align*}
\end{proof}

\begin{proof}[End of the proof of Theorem \ref{thm-3.2}.]
\[
M_1 = \sum_{i=1}^m
N^{-1/2} c_{0i}^\top \tilde c_i
N^{-1/2} .
\]
By the Rosenthal inequality (case $\nu \ge 2$; Theorem~\ref{thm-RosenNeq-2+})
\begin{align*}
\Me \|M_1\|^{2r} & \le {\rm const} \sum
_{i=1}^m \Me \bigl\llVert N^{-1/2}
c_{0i}^\top \tilde c_i N^{-1/2} \bigr
\rrVert ^{2r} + {}
\\
& \quad + {\rm const} \Biggl( \sum_{i=1}^m
\Me \big\| N^{-1/2} c_{0i}^\top \tilde c_i
N^{-1/2} \big\|^2 \Biggr)^{r} .
\end{align*}
Construct an upper bound for the first summand:
\begin{align*}
\sum_{i=1}^m \Me \bigl\llVert
N^{-1/2} c_{0i}^\top \tilde c_i
N^{-1/2} \bigr\rrVert ^{2r} &\le \sum
_{i=1}^m \big\| N^{-1/2} c_{0i}^\top
\big\|^{2r} \max\limits
_{i=1,\ldots,m} \Me \| \tilde c_i
\|^{2r} \big\| N^{-1/2} \big\|^{2r} ,\\
\sum_{i=1}^m \big\| N^{-1/2}
c_{0i}^\top \big\|^{2r} &\le \Biggl( \sum
_{i=1}^m \big\| N^{-1/2} c_{0i}^\top
\big\|^2 \Biggr)^{r}
\\
&= \Biggl( \sum_{i=1}^m c_{0i}
N^{-1} c_{0i}^\top \Biggr)^{r} = \bigl(
\trace \bigl(C_0 N^{-1} C_0^\top
\bigr) \bigr)^{r} \le n^{r}
\end{align*}
by inequality (\ref{neq-1282}).
By the conditions of Theorem~\ref{thm-3.2}, the sequence
$\{\max\limits_{i=1,\ldots,m}
\Me \| \tilde c_i \|^{2r}, \allowbreak\ m=1,2,\ldots\}$
is bounded.  Remember that $\|N^{-1/2}\| = \lambda_{\min}^{-1/2}
(A_0^\top A_0)$. Thus,{\sloppy
\[
\sum_{i=1}^m \Me \bigl\llVert
N^{-1/2} c_{0i}^\top \tilde c_i
N^{-1/2} \bigr\rrVert ^{2r} = \frac{O(1)}{\lambda_{\min}^r (A_0^\top A_0)} \quad\mbox{as
$m{\to}\infty$}.
\]}\relax
The asymptotic relation
\[
\sum_{i=1}^m \Me \bigl\llVert
N^{-1/2} c_{0i}^\top \tilde c_i
N^{-1/2} \bigr\rrVert ^{2} = \frac{O(1)}{\lambda_{\min} (A_0^\top A_0)}
\]
can be proved similarly; in order to prove it,
we use boundedness of the sequence
$\{\max\limits_{i=1,\ldots,m}
\Me \| \tilde c_i \|^{2}, \allowbreak\ m=1,2,\ldots\}$.
Finally,
\[
\Me \|M_1\|^{2r} = \frac{O(1)}{\lambda_{\min}^r (A_0^\top A_0^{})} \quad \mbox{as $m
\to\infty$.}
\]

The conditions of Theorem \ref{thm-3.2} imply that
$\sum_{m=m_0}^\infty  \Me \| M_1 \|^{2r} < \infty$,
whence $M_1 \to 0$ as $m\to\infty$, almost surely.

Now, prove that $M_2\to 0$ almost surely.
In order to construct a bound for $\Me\|M_2\|^r$,
use the Rosenthal inequality (case $\nu\ge2$; Theorem~\ref{thm-RosenNeq-2+})
as well as (\ref{neq-1308}):
\begin{align*}
\Me \|M_2\|^r &\le \frac{\Me \llVert  \sum_{i=1}^m
(c_i^\top \tilde c_i^{} - \varSigma)  \rrVert ^r}{
\lambda_{\min}^r (A_0^\top A_0^{})}
\\
&\le \frac{{\rm const}
\sum_{i=1}^m \Me \|\tilde c_i^\top \tilde c_i^{} - \varSigma\|^r}{
\lambda_{\min}^r (A_0^\top A_0^{})} + \frac{{\rm const}
 (\sum_{i=1}^m \Me\|\tilde c_i^\top \tilde c_i^{} - \varSigma\|^2
 )^{r/2}}{
\lambda_{\min}^r (A_0^\top A_0^{})} .
\end{align*}
Under the conditions of Theorem \ref{thm-3.2}, the sequences
$\{ \Me\|\tilde c_i^\top \tilde c_i^{} - \varSigma\|^r,\allowbreak
\ i=1,2,\ldots\}$
and
$\{ \Me\|\tilde c_i^\top \tilde c_i^{} - \varSigma\|^2,\allowbreak
\ i=1,2,\ldots\}$
are bounded.  Thus,
\begin{align*}
\Me \|M_2\|^r &= \frac{O(m^{r/2})}{
\lambda_{\min}^r (A_0^\top A_0^{})} \quad\mbox{as $m\to
\infty$;}
\\
\sum_{m=m_0}^\infty \Me \| M_2
\|^r &< \infty,
\end{align*}
whence $M_2 \to 0$ as $m\to\infty$, almost surely.
\end{proof}

\begin{proof}[End of the proof of Theorem \ref{thm-3.3}.]
The proof of the asymptotic relation
\[
\Me \|M_1\|^{2r} = \frac{O(1)}{\lambda_{\min}^r (A_0^\top A_0^{})} \quad\mbox{as $m\to
\infty$}
\]
from Theorem~\ref{thm-3.2} is still valid.
The almost sure convergence $M_1 \to 0$ as $m\to\infty$
is proved in the same way as in Theorem \ref{thm-3.2}.

Now, show that $M_2 \to 0$ as $m\to\infty$, almost surely.
Under the condition of Theorem \ref{thm-3.3},
\begin{gather*}
\Me\big\|\tilde c_m^\top \tilde c_m^{}
- \varSigma\big\|^r = O(1), \qquad \sum_{m=m_0}^\infty
\frac{\Me \|\tilde c_m^\top \tilde c_m^{} - \varSigma\|^r}{
\lambda_{\rm min}^r (A_0^\top A_0^{})} < \infty,
\end{gather*}
and $\Me \tilde c_i^\top \tilde c_i^{} - \varSigma = 0$.
The sequence of nonnegative numbers
$\{\lambda_{\min} (A_0^\top A_0),
\allowbreak\ m=1,2,\ldots\}$
never decreases and tends to $+\infty$.
Then, by the Law of large numbers in
\cite[Theorem 6.6, page 209]{Petrov1995}
\[
\frac{1}{\lambda_{\rm min} (A_0^\top A_0^{})} \sum_{i=1}^m \bigl(
\tilde c_i^\top \tilde c_i - \varSigma \bigr)
\to 0 \quad \mbox{as~$m\to \infty$,} \quad \mbox{a.s.,}
\]
whence, with (\ref{neq-1308}),
\begin{align*}
\|M_2\| &\le \frac{ \llVert  \sum_{i=1}^m
(\tilde c_i^\top \tilde c_i - \varSigma) \rrVert }{
\lambda_{\min} (A_0^\top A_0^{})} \to 0 \quad \mbox{as $m\to\infty$,
\quad a.s.;}
\\
M_2 &\to 0 \quad \mbox{as $m\to\infty$}, \quad \mbox{a.s.} \qedhere
\end{align*}
\end{proof}

\subsection{Proof of the uniqueness theorems}
\begin{proof}[Proof of Theorem~\ref{thm:wdue}.]
The random events \ref{wdue1}, \ref{wdue15} and \ref{wdue2}
are defined in the statement of this theorem on page~\pageref{thm:wdue}.
The random event \ref{wdue1} always occurs.
This was proved in Section~\ref{place:defhxx}
where the estimator $\hxx$ is defined.
In order to prove the rest, we first construct
the random event \eqref{eq:wXnear}, which occurs
either with high probability or eventually.
Then we prove that, whenever \eqref{eq:wXnear} occurs,
there is the existence and ``more than uniqueness''
in the random event \ref{wdue2}, and then
prove that the random event \ref{wdue15} occurs.

Now, we construct a modified version $\hxxsv$
of the estimator $\hxx$ in the following way.
If there exist such  solutions $(\Delta, \hxx)$
to \eqref{eqTLS118} \&  \eqref{eqTLSX124}
that\vadjust{\goodbreak} $\|\sin\angle(\hxx,\xxtrue)\|\ge (1+\|X_0\|^2)^{-1/2}$,
let $\hxxsv$ come from one of such solutions.
Otherwise, if for every solution $(\Delta, \hxx)$
to \eqref{eqTLS118} \&  \eqref{eqTLSX124}
$\|\sin\angle(\hxx,\xxtrue)\| < (1+\|X_0\|^2)^{-1/2}$,
let $\hxxsv$ come from one of these solutions.
In any case, let us construct $\hxxsv$ in such a way
that it is a random matrix.
It is possible; that follows from \cite{Pfanzagl1969}.

Thus we construct a matrix $\hxxsv$ such that:
\begin{enumerate}
\item $\hxxsv$ is a $(d+n)\times n$ random matrix;
\item for some
$\Delta \in \mathbb{R}^{m \times (d+n)}$,
   $(\Delta,  \hxxsv)$ is a solution to \eqref{eqTLS118} \&  \eqref{eqTLSX124};
\item
        if $\|\sin\angle(\hxxsv,\xxtrue)\| < (1+\|X_0\|^2)^{-1/2}$,
        then $\|\sin\angle(\hxx,\xxtrue)\| < (1+\|X_0\|^2)^{-1/2}$ for any
        solution $(\Delta,  \hxx)$ to \eqref{eqTLS118} \&  \eqref{eqTLSX124}.
\end{enumerate}

From the proof of Theorem~\ref{thm-3.1} it follows that
$\|\sin\angle(\hxxsv,\xxtrue)\| \to 0$ in probability as $m\to\infty$.
From the proof of Theorem~\ref{thm-3.2} or \ref{thm-3.3}
it follows that $\|\sin\angle(\hxxsv,\break\xxtrue)\| \to 0$ almost surely.
Then
\begin{equation}
\label{eq:wXnear} \big\| \sin\angle\bigl(\hxxsv, \xxtrue\bigr)\big\| < \frac{1}{\sqrt{1 + \|X_0\|^2}}
\end{equation}
either with high probability or almost surely.

Whenever the random event \eqref{eq:wXnear} occurs,
for any solution $\Delta$ to \eqref{eqTLS118}
and the corresponding full-rank solution $\hxx$ to \eqref{eqTLSX124}
(which always exists)
it holds that
$\|\sin\angle(\hxx,\break\xxtrue)\| < (1+\|X_0\|^2)^{-1/2}$,
whence, due to Theorem~\ref{thm-neqsindist},
the bottom $d\times d$ block of the matrix $\hxx$ is nonsingular.
Right-multiplying $\hxx$ by a nonsingular matrix, we can transform it
into a form $ (\begin{smallmatrix} \widehat X \\ -I \end{smallmatrix} )$.
The constructed matrix $\widehat X$ is a solution to equation~\eqref{eqTLSX126}
for given $\Delta$.
Thus, we have just proved that if the random event \eqref{eq:wXnear} occurs, then
for any $\Delta$ which is a solution to \eqref{eqTLS118},
equation \eqref{eqTLSX126} has a solution.

Now, prove the uniqueness of $\widehat X$.
Let $(\Delta_1, \widehat X_1)$ and $(\Delta_2, \widehat X_2)$
be two solutions to \eqref{eqTLS118} \& \eqref{eqTLSX126}.
Show that $\widehat X_1 = \widehat X_2$.
(If we \querymark{Q8}can for $\Delta_1 = \Delta_2$,
then the random event~\ref{wdue2} occurs.)
Denote
$\hxxno1 =  ( \begin{smallmatrix} \widehat X_1 \\ -I \end{smallmatrix}  )$
and
$\hxxno2 =  ( \begin{smallmatrix} \widehat X_2 \\ -I \end{smallmatrix}  )$.
By Proposition~\ref{prop:gep6.7},
$\colspana<\hxxno1> \subset \colspana<u_k, \allowbreak \; \nu_k \le d>$
and
$\colspana<\hxxno2> \subset \colspana<u_k, \allowbreak \; \nu_k \le d>$,
where $\nu_k$ and $u_k$ are generalized eigenvalues (arranged in ascending order)
and respective eigenvectors of the matrix pencil $\langle X^\top X, \, \varSigma\rangle$.

Assume by contradiction that $\widehat X_1 \neq \widehat X_2$.
Then $\rank [\hxxno1,\; \hxxno2] \ge d+1$,
where $[\hxxno1,\; \hxxno2]$ is an  $(n+d) \times 2d$ matrix constructed of $\hxxno1$ and $\hxxno2$.
Then
\[
d^* = \rank\langle u_k, \;\allowbreak \nu_k \le d\rangle
\ge \rank \begin{bmatrix} \hxxno1, &\hxxno2\end{bmatrix} \ge d+1
\]
(which means $\nu_d = \nu_{d+1}$).
Then $d_* - 1 < d < d^*$, where
$d_* - 1 = \dim\colspana<u_k, \allowbreak \; \nu_k < d>$,
$d = \dim\colspana<\xxtrue>$
and
$d^* = \dim\colspana<u_k, \allowbreak \; \nu_k \le d>$\allowbreak{}
(notation $d_*$ and $d^*$ comes from the proof of
Proposition~\ref{prop:gep6.7}).
By Lemma~\ref{lem:subsset1112132},
there exists a $d$-dimensional subspace $V_{12}$ for which
$\colspana<u_k, \, \nu_k < d> \subset V_{12} \subset\colspana<u_k, \, \nu_k \le d>$
and $\|\sin\angle(V_{12}, \xxtrue)\| = 1$.
Bind a basis of the $d$-dimensional subspace $V_{12} \subset \mathbb{R}^{(n+d)}$
into the $(n+d)\times d$ matrix $\hxxno3$, so $\colspana<\hxxno3> = V_{12}$.
Again, by\vadjust{\goodbreak} Proposition~\ref{prop:gep6.7} for some matrix $\Delta$,
$(\Delta, \hxxno3)$ is a solution to \eqref{eqTLS118} \& \eqref{eqTLSX126}.
Then\vadjust{\goodbreak} $\|\sin\angle(\hxxno3,\break \xxtrue)\| = 1 \ge (1 + \|X_0\|^2)^{-1/2}$.
Then $\|\sin\angle(\hxxsv, \xxtrue)\| \ge (1 + \|X_0\|^2)^{-1/2}$,
which contradicts \eqref{eq:wXnear}.
Thus, the random event~\ref{wdue2} occurs.

Now prove that the random event~\ref{wdue15} occurs.
Let $\Delta_1$ and $\Delta_2$ be two solutions to
the optimization problem~\eqref{eqTLS118}.
Whenever the random event~\eqref{eq:wXnear} occurs,
the respective solutions $\widehat X_1$ and $\widehat X_2$
to equation~\eqref{eqTLSX126} exist.
By already proved uniqueness, they are equal, i.e.,
$\widehat X_1 = \widehat X_2$.
Then both $\Delta_1$ and $\Delta_2$ are
solutions to the optimization problem
\begin{equation}
\label{eq:cmpot1} \begin{cases}
\| \Delta \, \pinvp(\varSigma^{1/2}) \|_F\to\min; \\
\Delta \, (I-P_\varSigma) = 0; \\
(C - \Delta) \hxxno1 = 0
\end{cases}
\end{equation}
for the fixed
$\hxxno1 =  ( \begin{smallmatrix} \widehat X_1 \\ -I \end{smallmatrix}  ) =
 ( \begin{smallmatrix} \widehat X_2 \\ -I \end{smallmatrix}  )$.
By Proposition~\ref{prop-l588} and Remark~\ref{rem:6.2-1},
the least element in the optimization problem~\eqref{TLS-fixX568}
for $X = \hxxno1$ is attained for the unique matrix
$\Delta = C \hxxno1 \pinvp(\hxxnoT1 \varSigma \hxxno1) \hxxnoT1 \varSigma$.
Since it is attained, it is also attained for
both $\Delta_1$ and $\Delta_2$.  Hence,
$\Delta_1 = \Delta_2
$.
Thus, the random event \ref{wdue15} occurs.

We proved that the random event \ref{wdue1} always occurs,
and the random events \ref{wdue15} and \ref{wdue2} occur whenever \eqref{eq:wXnear} occurs,
which occurs either with high probability or eventually as desired.
\end{proof}

\begin{rem}
This uniqueness of the solution $\Delta$ to the optimization problem \eqref{eqTLS118}
agrees with the uniqueness result in \cite{GOLUB1987317}.
The solution is unique if $\nu_d < \nu_{d+1}$.
\end{rem}

\begin{proof}[Proof of Theorem~\ref{thm:spnv}]
\ref{thm-spnv:part1}.
In Theorem~\ref{thm:wdue}, the event \ref{wdue1} occurs always, not just with high probability or eventually.
The solution $\Delta$ to \eqref{eqTLS118} exists and also solves \eqref{eqTLS220} due to
Proposition~\ref{prop:gs6.6}.  Thus, the first sentence of Theorem~\ref{thm:spnv} is true.
The second sentence of Theorem~\ref{thm:spnv} has been already proved, since the constraints
in the optimization problems \eqref{eqTLS118} and \eqref{eqTLS220} are the same.

\ref{thm-spnv:part2} \& \ref{thm-spnv:part3}.
The proof of consistency of the estimator defined with \eqref{eqTLS220} \& \eqref{eqTLSX126}
and of the existence of the solution is similar to the proof
for the estimator
defined with \eqref{eqTLS118} \& \eqref{eqTLSX126}
in Theorems \ref{thm-3.1}--\ref{thm-3.3} and \ref{thm:wdue}.
The only difference is skipping the use of Proposition~\ref{prop:gs6.6}.
Notice that we do not prove the uniqueness of the solution because we cannot use Proposition~\ref{prop:gep6.7}.
\end{proof}

\paragraph{To Remark~\ref{rem:spnvuni}}
The amended Theorem~\ref{thm:spnv}
can be proved similarly.
In the proof of part~\ref{thm-spnv:part1},
read ``The solution $\Delta$ to \eqref{eqTLS118} $\ldots$  solves \eqref{eqTLS118uin} due to
Proposition~\ref{prop:gs6.6uin}.''
In the proof of parts \ref{thm-spnv:part2} and
\ref{thm-spnv:part3}, read
``The only difference is using Proposition~\ref{prop:gs6.6uin}, part~\ref{prop:gs6.6uin:part2}
instead of Proposition~\ref{prop:gs6.6}.''

\subsection*{Proofs of auxiliary results}
\subsection{Proof of lemmas on perturbation bounds for invariant subspaces}
\begin{proof}[Proof of Lemma~\ref{lem-pertrv0} and Remark~\ref{rem:lem-pertrv0}.]
For the proof of Lemma~\ref{lem-pertrv0} itself, see\break parts 2 and 3 of the proof below.
For the proof of Remark~\ref{rem:lem-pertrv0}, see parts 2, 3 and 4 below.
Part 1 is a mere discussion of why the conditions of Remark~\ref{rem:lem-pertrv0}
are more general than ones of Lemma~\ref{lem-pertrv0}.

In the proof, we assume that $\{ x : x^\top B x > 0\}$ is the domain
of the function $f(x)$.  The assumption affects the definition of $\lim_{x \to x_*} f(x)$,
and $\inf f$ is the infimum of $f(x)$ \textit{over the domain}.

\paragraph{1} At first, clarify the conditions of Remark~\ref{rem:lem-pertrv0}.
As it is, the existence of a point $x$ such that
\begin{equation}
\label{eq-infmin_altx0} \liminf\limits
_{\vec t\to x}f(\vec t) = \inf\limits
_{\vec t^\top\! B \vec t > 0} f(\vec t)
\end{equation}
is assumed in Remark~\ref{rem:lem-pertrv0}.
Now, prove that, under the preceding condition of Remark~\ref{rem:lem-pertrv0},
there exists a vector $x \neq 0$ that satisfies \eqref{eq-infmin_altx0}.

The function $f(x)$ is homogeneous of degree 0, i.e.,
\[
f(k x) = f(x)\quad \text{if $k\in \mathbb{R} \setminus \{0\}$ and
$x^\top B x>0$}.
\]
Hence, all values which are attained by $f(x)$ on its domain
$\{x :\allowbreak  x^\top B x>0\}$,
are also attained on the bounded set
$\{x :\allowbreak \|x\|{=}1,\allowbreak\, x^\top B x>0\}$:
\[
f \bigl( \bigl\{x : \|x\|{=}1, \, x^\top B x>0 \bigr\} \bigr) = f
\bigl( \bigl\{x : x^\top B x>0 \bigr\} \bigr) .
\]
Then
\[
\inf_{\substack{\|x\|=1\\x^\top B x >0}} f(x) = \inf_{x^\top B x>0} f(x) .
\]

Let $\setA$ be a closure of $\{x :\allowbreak \|x\|{=}1,\allowbreak\, x^\top B x>0\}$.
There is a sequence $\{x_k, \break k = 1, 2, \ldots\}$ such that
$\|x_k\|{=}1$ and  $x_k^\top B x_k>0$ for all $k$, and
$\lim_{k\to\infty} f(x_k) = \inf_{x^\top B x>0} f(x)$.
Since $\setA$ is a compact set, there exists $x_* \in \setA$ which is
a limit of some subsequence $\{x_{k_i}, \: i = 1, 2, \ldots\}$ of $\{x_k, \: k = 1, 2, \ldots\}$.
Then either
\begin{equation}
\label{eq:li-subseq1} \liminf_{x \to x_*} f(x) \le \inf
_{x^\top B x>0} f(x)
\end{equation}
or, if $x_{k_i} = x_*$ for $i$ large enough,
\begin{equation}
\label{eq:li-subseq3} f(x_*) \le \inf_{x^\top B x>0} f(x) .
\end{equation}
(In equations \eqref{eq:li-subseq1} and \eqref{eq:li-subseq3},
we assume that
$\{x :\allowbreak x^\top B x>0\}$
is a domain of $f(x)$,
so \eqref{eq:li-subseq3} implies $x_*^\top B x_*^{} > 0$.)
Again, due to the homogeneity, $\liminf\limits_{x \to x_*} f(x) \le f(x_*)$ if $f(x_*)$ makes sense.
Hence \eqref{eq:li-subseq1} follows from \eqref{eq:li-subseq3} and thus holds true either way.

Taking the limit in the relation $f(x) \ge \inf f$, we obtain the opposite inequality
\begin{equation*}
\liminf_{x \to x_*} f(x) \ge \inf_{x^\top B x>0} f(x).
\end{equation*}
Thus, the equality \eqref{eq-infmin} holds true for some $x_* \in \setA$.
Note that $\|x_*\| = 1$, so $x_* \neq 0$.

\paragraph{2}
Prove that under the conditions of Lemma~\ref{lem-pertrv0} or Remark~\ref{rem:lem-pertrv0}
\begin{equation*}
\lleft[ \begin{array}{ll}
\displaystyle \mbox{either} & f(x_*) \le f(x) \\
\displaystyle \mbox{or}     & x_* ^\top (A + \tilde A) x_* \le 0.
\end{array} \rright.
\end{equation*}

Because the matrix $B$ is symmetric and positive semidefinite,
$x^\top B x = 0$ if and only if $B x = 0$,
and
$x^\top B x > 0$ if and only if $B x \neq 0 $.
As $B x_0 \neq 0$, $x_0^\top B x_0 > 0$
and the function $f(x)$ is well-defined at $x_0$.

Under the conditions of Lemma~\ref{lem-pertrv0}
the function $f(x)$ is well-defined at $x_0$
and attains its minimum at $x_*$, so $f(x_*) \le f(x_0)$.

Under the conditions of Remark~\ref{rem:lem-pertrv0} we consider 3 cases
concerning the value of $x_*^\top B x_*$.

\subparagraph{Case 1}  $x_*^\top B x_* < 0$.
But on the domain of $f(x)$ the inequality $x^\top B x > 0$ holds true.
Since $x_*$ is a limit point of the domain of $f(x)$,
the inequality $x_*^\top B x_* \ge 0$ holds true,
and Case 1 is impossible.

\subparagraph{Case 2}  $x_*^\top B x_* = 0$.
Prove that $x_* ^\top (A + \tilde A) x_* \le 0$.
On the contrary, let $x_* ^\top (A + \tilde A) x_* > 0$.
Remember once again that $x^\top B x > 0$ on the domain of $f(x)$.
Then
\[
\lim_{x \to x_*} f(x) = \lim_{x \to x_*}
\frac{x^\top (A + \tilde A) x}{x^\top B x} = +\infty,
\]
which cannot be $\inf f(x)$.
The contradiction obtained implies that $x_* ^\top (A + \tilde A) x_* \le 0$.

\subparagraph{Case 3}  $x_*^\top B x_* > 0$.
Then the function $f(x)$ is well-defined at $x_*$, and
\[
f(x_*) = \lim_{x \to x_*} f(x) = \inf f(x) \le f(x_0) .
\]
So, $f(x_*)  \le f(x_0)$ in Case 3.

\paragraph{3} Proof of Lemma~\ref{lem-pertrv0} and
proof of Remark~\ref{rem:lem-pertrv0}
when $f(x_*) \le f(x_*)$.
Then
\begin{displaymath}
\frac{x^\top (A + \tilde A) x}{x^\top B x} \le \frac{x_0^\top (A + \tilde A) x_0}{x_0^\top B x_0}\,.
\end{displaymath}
As $A x_0=0$,
\begin{equation*}
x^\top A x \le - x^\top \tilde A x + \frac{x_0^\top \tilde A x_0 \, x^\top B x}{x_0^\top B x_0} \le
\|\tilde A\| \biggl( \|x\|^2 + \frac{\|x_0\|^2 x^\top B x}{x_0^\top B x_0} \biggr) .
\end{equation*}

With use of eigendecomposition of $A$, the inequality
$x^\top A x \ge \lambda_2(A) \, \|x\|^2\times \sin^2\angle(x,x_0)$
can be proved.
Hence the desired inequality follows:
\[
\lambda_2(A) \sin^2\angle(x,x_0) \le \|
\tilde A\| \biggl( 1 + \frac{\|x_0\|^2}{x_0^\top B x_0^{}} \cdot \frac{x^\top B x}{\|x\|^2} \biggr).
\]

\paragraph{4} Proof of  Remark~\ref{rem:lem-pertrv0}
when $x_* ^\top (A + \tilde A) x_* \le 0$.
Then
\begin{align*}
x^\top A x &\le - x^\top \tilde A x,
\\
\lambda_2(A) \|x\|^2 \sin^2
\angle(x,x_0) &\le \|\tilde A\|\,\|x\|^2,
\\
\lambda_2(A) \sin^2\angle(x,x_0) &\le \|
\tilde A\|,
\end{align*}
whence the desired inequality follows.
\end{proof}

\begin{notation}
If $A$ and $B$ are symmetric matrices of the same size,
and furthermore the matrix $B$ is positive definite, denote
\[
\max\frac{A}{B} = \lambda_{\max} \bigl(B^{-1} A \bigr)
.
\]
The notation is used in the proof of Lemma~\ref{lem-pertrVV0}.
\end{notation}

\begin{lem}\label{lem:bofosin2}
Let $1 \le d_1 \le n$, $0 \le d_2 \le n$.
Let $X \in \mathbb{R}^{n \times d_1}$ be a matrix of full rank,
and $V$ be a $d_2$-dimensional subspace in $\mathbb{R}^n$.
Then
\begin{align*}
\max \frac{X^\top (I - P_V) X}{X^\top X} &= \big\|\sin \angle(X, V)\big\|^2 \quad \mbox{if}
\quad d_1 \le d_2,
\\
\max \frac{X^\top (I - P_V) X}{X^\top X} &= 1 \quad \mbox{if} \quad d_1 >
d_2.
\end{align*}
\end{lem}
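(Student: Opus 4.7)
The plan is to symmetrize the generalized Rayleigh quotient and then identify the result with the expression in~\eqref{eq:1-sin2}. Since $X$ has full column rank, $X^\top X$ is positive definite, so I can set $Y = X(X^\top X)^{-1/2}$, which has orthonormal columns and the same column span $W := \colspana<X>$ (of dimension $d_1$). The nonsymmetric matrix $(X^\top X)^{-1} X^\top (I-P_V) X$ is similar, via conjugation by $(X^\top X)^{1/2}$, to the symmetric matrix $Y^\top(I-P_V)Y$, so
\begin{equation*}
\max \frac{X^\top(I-P_V)X}{X^\top X}
= \lambda_{\max}\bigl((X^\top X)^{-1}X^\top(I-P_V)X\bigr)
= \lambda_{\max}\bigl(Y^\top(I-P_V)Y\bigr).
\end{equation*}
Since $I-P_V$ is an orthogonal projector, $Y^\top(I-P_V)Y = ((I-P_V)Y)^\top((I-P_V)Y)$, whose largest eigenvalue equals $\|(I-P_V)Y\|^2$.

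Next I would express $\|(I-P_V)Y\|^2$ intrinsically in terms of $W$. Because the columns of $Y$ are orthonormal and span $W$, the map $v\mapsto Yv$ is an isometry from $\mathbb{R}^{d_1}$ onto $W$, so
\begin{equation*}
\|(I-P_V)Y\|^2
= \max_{v\ne 0}\frac{\|(I-P_V)Yv\|^2}{\|v\|^2}
= \max_{w\in W\setminus\{0\}}\frac{w^\top(I-P_V)w}{\|w\|^2},
\end{equation*}
where I used $(I-P_V)^2 = I-P_V$ in the last step. By~\eqref{eq:1-sin2} applied with $V_1=W$ and $V_2=V$, this equals $\|\sin\angle(W,V)\|^2$. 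In the case $d_1\le d_2$, the notational convention $\|\sin\angle(X,V)\|=\|\sin\angle(\colspana<X>,V)\|$ declared just before the lemma yields the first formula of the lemma directly.

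To handle the case $d_1 > d_2$ I would invoke a dimension count: $\dim W + \dim V^{\bot} = d_1 + (n-d_2) > n$, so $W\cap V^{\bot}$ contains a nonzero vector $w$; for this $w$ we have $(I-P_V)w = w$, hence $\|(I-P_V)w\|^2/\|w\|^2 = 1$, and the trivial upper bound $\|I-P_V\|\le 1$ finishes the case with maximum equal to~$1$. No serious obstacle is expected; the main bookkeeping point is the symmetrization step that replaces the nonsymmetric matrix $(X^\top X)^{-1}X^\top(I-P_V)X$ by the symmetric conjugate $Y^\top(I-P_V)Y$ (so that $\lambda_{\max}$ can be read as a squared spectral norm), after which the two cases fall out of the classical Rayleigh-type characterization and a one-line dimension argument.
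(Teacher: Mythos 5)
Your proof is correct and follows essentially the same route as the paper: both arguments reduce $\max\frac{X^\top(I-P_V)X}{X^\top X}$ to the Rayleigh quotient $\max_{w\in\colspana<X>\setminus\{0\}} w^\top(I-P_V)w/\|w\|^2$ (you via the orthonormal factor $Y=X(X^\top X)^{-1/2}$ and similarity, the paper via the min-max theorem and insertion of $P_{\colspana<X>}$), identify it with $\|\sin\angle(\colspana<X>,V)\|^2$ when $d_1\le d_2$, and settle $d_1>d_2$ by the same dimension count producing a nonzero $w\in\colspana<X>\cap V^\bot$. The only cosmetic difference is that you invoke the variational formula for the sine directly, while the paper passes through $\|P_{\colspana<X>}(I-P_V)\|$ and equation~\eqref{eq:defmaxsin}.
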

\begin{proof}
Using the min-max theorem, the relation $\colspana<X> = \colspan\langle P_{\colspana<X>} \rangle$
and simple properties of orthogonal projectors,
construct the inequality
\begin{align*}
&\max\frac{X^\top (I-P_V) X}{X^\top X} \\
&\quad= \max_{v \in \mathbb{R}^{d_1}\setminus \{0\}} \frac{v^\top X^\top (I - P_V) X v} {v^\top X^\top  X v}
\\
 &\quad= \max_{w \in \colspana<X> \setminus \{0\}} \frac{w^\top (I - P_V) w} {w^\top   w} = \max
_{v \in \mathbb{R}^n \setminus \{0\}} \frac{v^\top P_{\colspana<X>} (I - P_V) P_{\colspana<X>} v} {v^\top P_{\colspana<X>} P_{\colspana<X>}  v}
\\
&\quad\ge \max_{v \in \mathbb{R}^n \setminus \{0\}} \frac{v^\top P_{\colspana<X>} (I - P_V) P_{\colspana<X>} v} {v^\top   v} = \lambda_{\max}
\bigl(P_{\colspana<X>} (I - P_V) P_{\colspana<X>} \bigr)
\\
&\quad= \lambda_{\max} \bigl(P_{\colspana<X>} (I - P_V) (I -
P_V) P_{\colspana<X>} \bigr) = \big\| P_{\colspana<X>} (I -
P_V) \big\|^2 .
\end{align*}
On the other hand,
\begin{align*}
\max_{w \in \colspana<X> \setminus \{0\}} \frac{w^\top (I - P_V) w} {w^\top   w} &= \max_{w \in \colspana<X> \setminus \{0\}}
\frac{w^\top P_{\colspana<X>} (I - P_V) P_{\colspana<X>} w} {w^\top   w}
\\
&\le \max_{v \in \mathbb{R}^n \setminus \{0\}} \frac{v^\top P_{\colspana<X>} (I - P_V) P_{\colspana<X>} v} {v^\top   v}.
\end{align*}
Thus,
\[
\max\frac{X^\top (I-P_V) X}{X^\top X} = \big\| P_{\colspana<X>} (I - P_V)
\big\|^2.
\]
If $d_1 \le d_2$, then $\| P_{\colspana<X>} (I - P_V) \| = \|\sin\angle(X, V)\|$ due to
\eqref{eq:defmaxsin}.
Otherwise, if $d_1 > d_2$, then
\[
\dim\colspana<X> + \dim V^\bot = \rank X + n - \dim V =
d_1 + n - d_2 > n.
\]
Hence the subspaces $\colspana<X>$ and $V^\bot$ have nontrivial intersection,
i.e., there exists $w\neq 0$, $w \in \colspana<X> \cap V^\bot$.
Then $P_{\colspana<X>} (I - P_V) w = w$, whence $\| P_{\colspana<X>} (I - P_V) \| \ge 1$.
On the other hand, $\| P_{\colspana<X>} (I - P_V) \| \le \| P_{\colspana<X>} \| \times \allowbreak \| (I - P_V) \| \le 1$.
Thus, $\| P_{\colspana<X>} (I - P_V) \| = 1$. This completes the proof.
\end{proof}

\begin{proof}[Proof of Lemma~\ref{lem-pertrVV0}.]
The matrix $B$ is positive semidefinite,
the matrix $X_0^\top B X_0$ is positive definite,
and the matrix $X_0$ is of full rank $d$
(hence, $n \ge d$).
The matrix $A$ satisfies inequality
$A \ge \lambda_{d+1}(A) (I - P_{\colspana <X_0>})$
in the Loewner order.

Let $X$ be a point where the functional $f(x)$ defined in  \eqref{eqf-lempertrVV0} attains its minimum.
Since $X_0^\top B X_0$ is positive definite, $f(X_0)$ makes sense.
Thus, $f(X) \le f(X_0)$,
\[
\max\frac{X^\top (A + \tilde A) X}{X^\top B X} \le \max\frac{X_0^\top (A + \tilde A) X_0} {X_0^\top B X_0^{}} .
\]
Using the relations
\begin{align*}
X^\top \tilde A X &\ge - \|\tilde A\|\, X^\top X, \qquad
X_0^\top \tilde A X_0 \le \|\tilde A\|\,
X_0^\top X_0,
\\
X^\top B X &\le \|B\|\, X^\top X, \qquad A X_0= 0,
\end{align*}
we have
\begin{align}
\max\frac{X^\top A X - \|\tilde A\| X^\top X}{\|B\|\, X^\top X} &\le \max\frac{\|\tilde A\|\, X_0^\top X_0^{}}{X_0^\top B X_0^{}},
\nonumber
\\
\frac{1}{\|B\|} \cdot \biggl( \max\frac{X^\top A X}{X^\top X} - \|\tilde A\| \biggr)
&\le \|\tilde A\| \max\frac{X_0^\top X_0^{}}{X_0^\top B X_0^{}} \label{neq-l1360}.
\end{align}

Since $A \ge \lambda_{d+1}(A) (I - P_{\colspana <X_0>})$,
by Lemma~\ref{lem:bofosin2}
\[
\lambda_{d+1} (A) \, \big\|\sin\angle(X,X_0)\big\|^2
\le \lambda_{d+1} (A) \max\frac{X^\top (I - P_{\colspana <X_0>})}{X^\top X} \le \max
\frac{X^\top A X}{X^\top X}.
\]
Then the desired inequality follows from (\ref{neq-l1360}):
\begin{equation*}
\big\|\sin\angle(X,X_0)\big\|^2 \le \frac{\|\tilde A\|}{\lambda_{d+1}(A)} \biggl(1
+ \|B\| \max\frac{X_0^\top X_0^{}} {X_0^\top B X_0^{}} \biggr). \qedhere
\end{equation*}
\end{proof}

\subsection{Comparison of $\|\sin\angle(\hxx,\xxtrue)\|$ and
$\|\widehat X - \xtrue\|$}
In the next theorem and in its proof, matrices $A$, $B$
and $\varSigma$ have different meaning than elsewhere in the paper.
\begin{thm}\label{thm-neqsindist}
Let $ (\begin{smallmatrix} A \\ B \end{smallmatrix} )$
and $ (\begin{smallmatrix} X_0 \\ -I \end{smallmatrix} )$
be full-rank $(n + d) \times d$ matrices.
If
\begin{equation}
\lleft\llVert \sin\angle \lleft( \lleft(\begin{matrix} A \\ B \end{matrix}
\rright),\: \lleft(\begin{matrix} X_0 \\ -I \end{matrix} \rright) \rright)
\rright\rrVert < \frac {1}{\sqrt{1 + \|X_0\|^2}}, \label{neq-cond-1700}
\end{equation}
then:
\begin{enumerate}
\item[1\char41] the matrix $B$ is nonsingular;
\item[2\char41] $\| A B^{-1} + X_0 \| \le
\frac{ (1+\|X_0\|^2)\:
(\|X_0\| s^2 + s \sqrt{1-s^2})}{
 1 - (1+\|X_0\|^2)\: s^2 }$ with $s =
 \llVert  \sin\angle  (
 (\begin{smallmatrix} A \\ B \end{smallmatrix} ),\:
 (\begin{smallmatrix} X_0 \\ -I \end{smallmatrix} )
 )  \rrVert $.
\end{enumerate}
\end{thm}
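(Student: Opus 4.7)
The plan hinges on an explicit representation of the orthogonal complement of $V_2 := \colspana<\bigl(\begin{smallmatrix} X_0 \\ -I \end{smallmatrix}\bigr)>$: a vector $\bigl(\begin{smallmatrix} p \\ q \end{smallmatrix}\bigr)$ is orthogonal to $V_2$ iff $q = X_0^{\top} p$, so $V_2^{\bot} = \colspana<\bigl(\begin{smallmatrix} I \\ X_0^{\top} \end{smallmatrix}\bigr)>$ and $P_{V_2^{\bot}} = \bigl(\begin{smallmatrix} I \\ X_0^{\top} \end{smallmatrix}\bigr)(I + X_0 X_0^{\top})^{-1}(I\ \ X_0)$. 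Writing $V_1 := \colspana<\bigl(\begin{smallmatrix} A \\ B \end{smallmatrix}\bigr)>$, the equidimensionality $\dim V_1 = \dim V_2 = d$ combined with~\eqref{eq:1-sin2} gives $\|P_{V_2^{\bot}} u\| \le s\,\|u\|$ for every $u \in V_1$. For part~1 I will argue by contradiction: if $Bv = 0$ for some $v \ne 0$, then $u := \bigl(\begin{smallmatrix} Av \\ 0 \end{smallmatrix}\bigr) \in V_1$ is nonzero by the full-rank hypothesis on $\bigl(\begin{smallmatrix} A \\ B \end{smallmatrix}\bigr)$, and the projection formula collapses to $\|P_{V_2^{\bot}} u\|^2 = (Av)^{\top}(I + X_0 X_0^{\top})^{-1}(Av) \ge \|u\|^2/(1 + \|X_0\|^2)$, forcing $s \ge 1/\sqrt{1 + \|X_0\|^2}$, contradicting~\eqref{neq-cond-1700}.

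For part~2, once $B$ is known to be invertible, I will set $\widehat X := -AB^{-1}$, so that $V_1 = \colspana<\bigl(\begin{smallmatrix} \widehat X \\ -I \end{smallmatrix}\bigr)>$ and $\|AB^{-1} + X_0\| = \|\widehat X - X_0\|$. Applying the projection formula to the spanning vector $\bigl(\begin{smallmatrix} \widehat X e \\ -e \end{smallmatrix}\bigr)$ for an arbitrary unit $e \in \mathbb{R}^d$ will yield
\[
\frac{\|(\widehat X - X_0)e\|^2}{1+\|X_0\|^2} \le \bigl((\widehat X - X_0)e\bigr)^{\top}(I + X_0 X_0^{\top})^{-1}(\widehat X - X_0)e \le s^2\bigl(\|\widehat X\|^2 + 1\bigr).
\]
Maximizing over unit $e$ and writing $r := \|\widehat X - X_0\|$ produces the key estimate $r^2 \le s^2(1 + \|X_0\|^2)(\|\widehat X\|^2 + 1)$.

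The hard part will be closing the loop on the a priori unknown quantity $\|\widehat X\|$. My plan is to feed in the crude triangle bound $\|\widehat X\| \le \|X_0\| + r$, which converts the above into the quadratic inequality
\[
\bigl(1 - s^2(1+\|X_0\|^2)\bigr) r^2 - 2\|X_0\|\, s^2(1+\|X_0\|^2)\, r - s^2(1+\|X_0\|^2)^2 \le 0,
\]
whose leading coefficient is positive by~\eqref{neq-cond-1700}. Applying the quadratic formula and using the identity $\|X_0\|^2 s^2 + (1 - s^2(1+\|X_0\|^2)) = 1 - s^2$ to simplify the discriminant into $s(1+\|X_0\|^2)\sqrt{1-s^2}$ will give precisely the bound stated in the theorem.
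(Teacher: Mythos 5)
Your proposal is correct and takes essentially the same route as the paper's proof: the explicit projector onto the orthogonal complement of $\colspana<\xxtrue>$, whose upper-left block $(I+X_0X_0^\top)^{-1}$ has smallest eigenvalue $1/(1+\|X_0\|^2)$, the same test vector with zero lower block for part~1, the same test vector along the column space of $\bigl(\begin{smallmatrix} A \\ B \end{smallmatrix}\bigr)B^{-1}$ for part~2, and the same triangle-inequality bootstrap $\|AB^{-1}\| \le \|X_0\| + \|AB^{-1}+X_0\|$. The only difference is cosmetic: you solve the resulting quadratic inequality in $r=\|AB^{-1}+X_0\|$ by the quadratic formula, while the paper inverts the strictly increasing function $\delta \mapsto \delta/\bigl(\sqrt{1+\|X_0\|^2}\,\sqrt{1+(\|X_0\|+\delta)^2}\bigr)$ — the same algebra packaged differently.
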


\begin{proof}
{\it 1.} Split the matrix
$P^\bot_\smallxxmat$,
which is an orthogonal projector along the column space
of the matrix $ (\begin{smallmatrix} X_0 \\ -I \end{smallmatrix} )$,
into four blocks:
\[
I - P_\smallxxmat = P^\bot_\smallxxmat = \begin{pmatrix} {\bf P}_1 & {\bf P}_2 \\
{\bf P}_2^\top & {\bf P}_4
\end{pmatrix}
.
\]
Up to the end of the proof, ${\bf P}_1$ means
the upper-left $n\times n$ block of the
$(n+p)\times (n+p)$ matrix $P^\bot_\smallxxmat$.
Prove that
$\lambda_{\min} ({\bf P}_1) = \frac{1}{1+\|X_0\|^2}$.

Let $X_0 = U \varSigma V^\top$ be a singular value decomposition of the matrix $X_0$
(here $\varSigma$ is a diagonal $n\times d$ matrix,
$U$ and $V$ are orthogonal matrices). Then
\begin{align*}
P^\bot_\smallxxmat &= I - \begin{pmatrix} X_0 \\ -I \end{pmatrix} \lleft(
\begin{pmatrix} X_0 \\ -I \end{pmatrix}^{\!\top} \begin{pmatrix} X_0 \\ -I \end{pmatrix} \rright)^{\!-1} \begin{pmatrix} X_0 \\ -I \end{pmatrix}
\\
&= \begin{pmatrix}
U (I - \varSigma (\varSigma^\top \varSigma + 1)^{-1} \varSigma^\top) U^\top &
U \varSigma (\varSigma^\top \varSigma + I)^{-1}  V^\top \\
V (\varSigma^\top \varSigma + I)^{-1} \varSigma^\top U^\top &
V (I - (\varSigma^\top \varSigma + I)^{-1}) V^\top
\end{pmatrix} .
\end{align*}

The $n\times n$ matrix $I - \varSigma (\varSigma^\top \varSigma + I)^{-1} \varSigma^\top$ is diagonal;
its diagonal entries are $\frac{1}{1+\sigma_i^2(X_0)}$,\,
$i=1,\ldots,n$, where
\mycenter{
\begin{tabular}{l}
$\sigma_i(X_0)$ is the $i$-th singular value of $X_0$ if
$1 \le i \le \min(n,d)$,\\
$\sigma_i(X_0) = 0$ if $\min(n,d) < i \le n$.
\end{tabular}}
Those diagonal entries comprise all the eigenvalues of ${\bf P}_1$;
\[
\lambda_{\min}({\bf P}_1) = \frac{1}{1 + \sigma_{\rm max}^2(\|X_0\|)} =
\frac{1}{1 + \|X_0\|^2} .
\]

\paragraph{2}
Due to equation \eqref{eq:defmaxsin}, the square of the largest of
\querymark{Q9}sines of canonical eigenvalues between the subspaces
$V_1$ and $V_2$ is equal to
\[
\big\| \sin \angle(V_1, V_2) \big\|^2 = \max
_{v \in V_1\setminus \{0\}} \frac{v^\top P^\bot_{V_2} v}{\|v\|^2} .
\]

Hence for $v \in V_1$, $v \neq 0$,
\begin{equation}
\big\| \sin \angle(V_1, V_2) \big\|^2 \ge
\frac{v^\top P^\bot_{V_2} v}{\|v\|^2} . \label{neq-ts-1777}
\end{equation}

\paragraph{3}
Prove the first statement of Theorem~\ref{thm-neqsindist}
by contradiction.
Suppose that the matrix $B$ is singular.
Then there exist $f \in \mathbb{R}^d \setminus \{0\}$ and $u = A f \in \mathbb{R}^n$
such that $B f = 0$ and
\[
\begin{pmatrix} u \\ 0_{d \times 1} \end{pmatrix} = \begin{pmatrix} A f \\ B f \end{pmatrix} \in V_1 ,
\]
where $V_1 \subset \mathbb{R}^{n+d}$ is the column space of
the matrix $ (\begin{smallmatrix} A \\ B \end{smallmatrix} )$.
As the columns of the matrix
$ (\begin{smallmatrix} A \\ B \end{smallmatrix} )$  are linearly independent,
$ (\begin{smallmatrix} u \\ 0 \end{smallmatrix} ) \neq 0$.
Then, by \eqref{neq-ts-1777},
\begin{align*}
\lleft\llVert \sin\angle \lleft( \lleft(\begin{matrix} A \\ B \end{matrix}
\rright),\: \lleft(\begin{matrix} X_0 \\ -I \end{matrix} \rright) \rright)
\rright\rrVert ^2 &\ge \frac{
\lleft(\begin{matrix} u \\ 0\end{matrix} \rright)^\top
P^\bot_\smallxxmat
\lleft(\begin{matrix} u \\ 0\end{matrix} \rright)
}{
 \llVert  (\begin{smallmatrix} u \\ 0\end{smallmatrix} ) \rrVert ^2
} = \frac{u^\top {\bf P}_1 u} {\|u\|^2}
\ge
\\
&\ge \lambda_{\min}({\bf P}_1) = \frac{1}{1 + \|X_0\|^2} ,
\end{align*}
which contradicts condition (\ref{neq-cond-1700}).

\paragraph{4} Prove inequality (\ref{neq-1870}).
(Later on we will show that the second statement of Theorem~\ref{thm-neqsindist}
follows from (\ref{neq-1870})).
There exists such a vector $f \in \mathbb{R}^d \setminus \{0\}$
that $\|(A B^{-1} + X_0)\: f\| = \|A B^{-1} + X_0\|\: \|f\|$.
Denote
\begin{align*}
u &= \bigl(A B^{-1} + X_0 \bigr) f,
\\
z &= \begin{pmatrix} A \\ B \end{pmatrix} B^{-1} f = \begin{pmatrix} A B^{-1} f \\ f \end{pmatrix} =
\begin{pmatrix} u \\ 0 \end{pmatrix} - \begin{pmatrix} X_0 \\ -I \end{pmatrix} f \in V_1 .
\end{align*}
Since
$(X_0^\top, -I)
P^\bot_\smallxxmat = 0$ and
$P^\bot_\smallxxmat
 (\begin{smallmatrix} X_0 \\ -I \end{smallmatrix}  ) = 0$,
\begin{align*}
z^\top P^\bot_\smallxxmat z &= \lleft(
\begin{pmatrix} u \\ 0 \end{pmatrix} - \begin{pmatrix} X_0 \\ -I \end{pmatrix} f \rright)^{\!\top}
P^\bot_\smallxxmat \lleft( \begin{pmatrix} u \\ 0 \end{pmatrix} -
\begin{pmatrix} X_0 \\ -I \end{pmatrix} f \rright)
\\
&= \begin{pmatrix} u \\ 0 \end{pmatrix}^{\!\top} P^\bot_\smallxxmat
\begin{pmatrix} u \\ 0 \end{pmatrix} = u^\top {\bf P}_1 u
\\
&\ge \|u\|^2 \lambda_{\min} ({\bf P}_1) =
\frac{\|A B^{-1} + X_0\|^2\:\|f\|^2} {1 + \|X_0\|^2} .
\end{align*}

Notice that $z \neq 0$ because $B^{-1} f \neq 0$ and the columns of the matrix
$ (\begin{smallmatrix}A \\ B\end{smallmatrix} )$
are linearly independent. Thus,
\[
0 < \|z\|^2 = \big\|A B^{-1} f\big\|^2 +
\big\|f^2\big\| \le \bigl(1 + \big\|A B^{-1}\big\|^2 \bigr) \,
\|f\|^2 .
\]
By (\ref{neq-ts-1777}),
\begin{align}
\nonumber
\left\llVert \sin\angle\lleft( \begin{pmatrix} A \\ B \end{pmatrix},\,
\begin{pmatrix} X_0 \\ -I \end{pmatrix} \rright) \right\rrVert ^2 &\ge
\frac{z^\top P^\bot_{ (\begin{smallmatrix}
X_0 \\ -I \end{smallmatrix} )} z}{\|z\|^2} \ge \frac{\|A B^{-1} + X_0\|^2}{
(1 + \|X_0\|^2)\, (1 + \|A B^{-1}\|^2)},
\\
\label{neq-1870} \left\llVert \sin\angle\lleft( \begin{pmatrix} A \\ B \end{pmatrix},\,
\begin{pmatrix} X_0 \\ -I \end{pmatrix} \rright) \right\rrVert &\ge \frac{\|A B^{-1} + X_0\|}{
\sqrt{1 + \|X_0\|^2}\,\sqrt{1 + (\|X_0\| + \|A B^{-1} + X_0\|)^2}} .
\end{align}

\paragraph{5} Prove that the second statement of Theorem~\ref{thm-neqsindist}
follows from (\ref{neq-1870}).
The function
\begin{equation}
s(\delta) := \frac{\delta}{
\sqrt{1 + \|X_0\|^2}\,\sqrt{1 + (\|X_0\| + \delta)^2}} \label{form-1881}
\end{equation}
is strictly increasing on $[0,{+}\infty
)$, with $s(0)=0$  and $\lim_{\delta \to +\infty} s(\delta) = \frac{1}{\sqrt{1+\|X_0\|^2}}$.
Therefore, inequality (\ref{neq-1870}) implies the implication:
\begin{align*}
\mbox{if {}}\big\|A B^{-1} + X_0\big\| &> \delta,
\\
\mbox{then {}} \left\llVert \sin\angle\lleft( \begin{pmatrix} A   \\  B \end{pmatrix},\,
\begin{pmatrix} X_0 \\ -I \end{pmatrix} \rright) \right\rrVert &> \frac{\delta}{
\sqrt{1 + \|X_0\|^2}\,\sqrt{1 + (\|X_0\| + \delta)^2}} \rlap{.}
\end{align*}
The equivalent contrapositive implication is as follows:
\begin{align}
\mbox{if {}} \left\llVert \sin\angle\lleft( \begin{pmatrix} A \\ B \end{pmatrix},\,
\begin{pmatrix} X_0 \\ -I \end{pmatrix} \rright) \right\rrVert &\le \frac{\delta}{
\sqrt{1 + \|X_0\|^2}\,\sqrt{1 + (\|X_0\| + \delta)^2}},\notag
\\
\mbox{then {}}\big\|A B^{-1} + X_0\big\| &\le \delta \rlap{.}
\label{neq1905}
\end{align}
The inverse function to $s(\delta)$ in (\ref{form-1881}) is
\[
\delta(s) := \frac{(1 + \|X_0\|^2) \:
(s^2 \,\|X_0\| + s \sqrt{1-s^2})}{
1 - (1 + \|X_0\|^2) s^2}.
\]
Substitute
$\delta = \delta (  \llVert  \sin \angle  (
 (\begin{smallmatrix} A \\ B \end{smallmatrix}
\vphantom{smallxxmat} ),
 (\begin{smallmatrix} X_0 \\ -I \end{smallmatrix} )
 )  \rrVert   )$
into \eqref{neq1905} and obtain the following statement:
\begin{align*}
\mbox{if {}} \left\llVert \sin\angle\lleft( \begin{pmatrix} A \\ B \end{pmatrix}\,
\begin{pmatrix} X_0 \\ -I \end{pmatrix} \rright) \right\rrVert &\le \left\llVert \sin\angle
\lleft( \begin{pmatrix} A \\ B \end{pmatrix}, \begin{pmatrix} X_0 \\ -I \end{pmatrix} \rright) \right
\rrVert ,
\\
\mbox{then {}}\big\|A B^{-1} + X_0\big\| &\le \delta \bigl( \bigl
\llVert \sin \angle \bigl( (\begin{smallmatrix} A_{}
\\
B \end{smallmatrix} \vphantom{smallxxmat} ), (\begin{smallmatrix} X_0
\\
-I \end{smallmatrix} ) \bigr) \bigr\rrVert \bigr) \rlap{,}
\end{align*}
whence the second statement of Theorem~\ref{thm-neqsindist}
follows.

In part 5 of the proof,  condition (\ref{neq-cond-1700}) is used twice.
First, it is one of conditions of the first statement of the theorem:
without it, the matrix $B$ might be singular.
Second, the function $\delta(s)$ is defined on interval $ [0,
\frac{1}{\sqrt{1+\|X_0\|^2}} )$.
\end{proof}

\begin{cora}
Let
$ (\begin{smallmatrix} X_0 \\ -I \end{smallmatrix} )$
be an $(n+d)\times d$ matrix,
and let $ \{ (\begin{smallmatrix}A_m \\ B_m\end{smallmatrix} ),
\allowbreak\hspace{1em}
\vphantom{\textstyle\begin{smallmatrix}A_m \\ B_m\end{smallmatrix}}
m=1,2,\ldots \}$ be a sequence of
$(n+d)\times d$ matrices of rank $d$.  If
$ \llVert \sin\angle (
 (\begin{smallmatrix} A_m \\ B_m \end{smallmatrix} ),\,
 (\begin{smallmatrix} X_0 \\ -I \end{smallmatrix} )
 ) \rrVert  \to 0$ as $m\to\infty$, then:{
\begin{enumerate}
\item[1\char41] the matrix $B_m$ is nonsingular for $m$ large enough,
\item[2\char41] $-A_m^{} B_m^{-1} \to X_0$ as $m{\to}\infty$.
\end{enumerate}}%
\end{cora}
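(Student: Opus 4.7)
The plan is to derive this corollary directly from Theorem~\ref{thm-neqsindist} applied to each $\binom{A_m}{B_m}$ for $m$ sufficiently large. Since the sine tends to zero, eventually it falls below the threshold $1/\sqrt{1+\|X_0\|^2}$, at which point both conclusions of Theorem~\ref{thm-neqsindist} become available.

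More concretely, first I would choose $m_0$ such that $s_m := \|\sin\angle(\binom{A_m}{B_m}, \binom{X_0}{-I})\| < 1/\sqrt{1+\|X_0\|^2}$ for all $m \ge m_0$. This is possible by the convergence hypothesis $s_m \to 0$. For each such $m$, condition~\eqref{neq-cond-1700} holds with $\binom{A}{B}$ replaced by $\binom{A_m}{B_m}$, so part~1 of Theorem~\ref{thm-neqsindist} immediately gives nonsingularity of $B_m$, proving the first statement.

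Next, applying part~2 of Theorem~\ref{thm-neqsindist} to $\binom{A_m}{B_m}$ gives the bound
\[
\big\|A_m B_m^{-1} + X_0\big\| \le \frac{(1+\|X_0\|^2)(s_m^2\,\|X_0\| + s_m \sqrt{1-s_m^2})}{1 - (1+\|X_0\|^2)\, s_m^2}
\]
for all $m \ge m_0$. As $s_m \to 0$, the numerator tends to $0$ while the denominator tends to $1$, so the right-hand side tends to $0$. This yields $\|A_m B_m^{-1} + X_0\| \to 0$, i.e., $-A_m B_m^{-1} \to X_0$, establishing the second statement.

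There is essentially no obstacle here: the corollary is a straightforward asymptotic reading of the quantitative bound in Theorem~\ref{thm-neqsindist}. The only mild care needed is to ensure $m$ is taken large enough so that \eqref{neq-cond-1700} holds, which is what forces nonsingularity of $B_m$ and simultaneously keeps $1-(1+\|X_0\|^2)\,s_m^2$ bounded away from zero, allowing the quotient bound to be controlled.
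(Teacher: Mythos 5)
Your proof is correct and is exactly the intended argument: the paper states this corollary as an immediate consequence of Theorem~\ref{thm-neqsindist}, applying it once $s_m$ drops below $1/\sqrt{1+\|X_0\|^2}$ and letting the quantitative bound in part~2 tend to zero. Nothing is missing.
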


\subsection{Generalized eigenvalue problem for positive semidefinite matrices:
proofs}\label{ssec:GEP:Proofs}
\begin{proof}[Proof of Lemma~\ref{lem:psdgsvd1}.]
For fixed $i$, split the matrix $T$ in two blocks.
Let $T = [T_{i1}, T_{i2}]$, where
$T_{i1}$ is the matrix constructed of the first $i$ columns of $T$, and
$T_{i2}$ is the matrix constructed of the last $n-i+1$ columns of
$T$.
Denote $V_1$ and $V_2$ the column spaces of the matrices $T_{i1}$ and $T_{i2}$,
respectively. Then $\dim V_1 = i$ and $\dim V_2 = n-i+1$.

\paragraph{1}
\textit{The proof of the fact that\querymark{Q10} $\nu_i \in \{ \lambda\ge 0 \mathrel|
\text{``}\exists V, \ \dim V = i :
(A - \lambda B)|_V \le 0 \text{''}\}$ if $\nu_i < \infty$}.
In other words, if $\nu_i < \infty$, then relations
\begin{equation}
\label{eq:1018} \lambda \ge 0, \qquad \dim(V) = i, \qquad (A - \lambda
B)|_V \le 0
\end{equation}
hold true for $\lambda=\nu_i$ and $V = V_1$.

If $v \in V_1$, then $v = T_{i1} x$ for some
$x \in \mathbb{R}^i$.
Hence
\begin{align*}
v^\top (A - \nu_i B) v &= x^\top
T_{i1}^\top (A - \nu_i B) T_{i1}^{}
x
\\
&= x^\top \diag(\lambda_1 {-} \nu_i
\mu_1,\, \ldots,\, \lambda_i {-} \nu_i
\mu_1) x = \sum_{j=1}^i
x_j^2 (\lambda_j - \nu_i
\mu_j).
\end{align*}
The inequality $\lambda_j - \nu_i \mu_j \le 0$
holds true for all $j$ such
that either $\lambda_j = \mu_j = 0$ or $\lambda_j / \mu_j \le
\nu_i$; particularly, it holds true for $j = 1, \ldots, i$. Hence
$v^\top (A - \nu_i B) v \le 0$.

\paragraph{2}
\textit{The proof of the fact that $\nu_i$ is a lower bound of the set
$ \{ \lambda\ge 0 \mathrel|
\text{``}\exists V, \ \dim V = i :
(A - \lambda B)|_V \le 0 \text{''}\}$}.
In other words, if there exists a subspace $V\subset\mathbb{R}^n$
such that the relations \eqref{eq:1018} hold true,
then $\nu_i \le \lambda$.

By contradiction, suppose that $\dim V = i$, $(A - \lambda B)|_V \le 0$, $\nu_i > \lambda \ge 0$.
Then $\nu_i > 0$.

Now prove that $(A - \lambda B)|_{V_2} > 0$.
If  $v \in V_2 \setminus\{0\}$, then $v = T_{i2} x$ for some
$x \in \mathbb{R}^{n-i+1} \setminus\{0\}$.
Then
\[
v^\top (A - \lambda B) v = \sum_{j=i}^n
x_{j+1-i}^2 (\lambda_j - \lambda
\mu_j).
\]
For $j \ge i$, due to the inequality $\nu_j \ge \nu_i > 0$
and the conditions of the lemma, the case $\lambda_j=0$
is impossible; thus $\lambda_j>0$.
Prove the inequality  $\lambda_j - \lambda \mu_j > 0$.
If $\mu_j > 0$, then
$\lambda_j - \lambda \mu_j = (\nu_j - \lambda) \mu_j$.
Since $\nu_j \ge \nu_i > \lambda$, the first factor
$\nu_i - \lambda$ is a positive number. Hence,
$\lambda_j - \lambda \mu_j > 0$.
Otherwise, if $\mu_j = 0$, then
$\lambda_j - \lambda \mu_j = \lambda_j > 0$.
Thus the inequality $\lambda_j - \lambda \mu_j  > 0$
holds true in both cases.
Hence $v^\top (A - \lambda B) v > 0$.
Since this holds for all $v \in V_2 \setminus\{0\}$,
the restriction of the quadratic form $A - \lambda B$
onto the linear subspace $V_2$ is positive definite.

On the one hand, since
$(A - \lambda B)|_V \le 0$ and $(A - \lambda B)|_{V_2} > 0$,
the subspaces $V$ and $V_2$ have a trivial intersection.
On the other hand, since $\dim V + \dim V_2 = n+1 > n$,
the subspaces $V$ and $V_2$ cannot have a trivial intersection.
We got a contradiction.

Hence $\nu_i \le \lambda$, and $\nu_i$ is a lower bound of
$ \{ \lambda\ge 0 \mathrel|
\text{``}\exists V, \ \dim V = i :
(A - \lambda B)|_V \le 0 \text{''}\}$.
That completes the proof of Lemma~\ref{lem:psdgsvd1}.
\end{proof}

Remember that $\pinvb(M)$ is the Moore--Penrose pseudoinverse matrix to $M$;
$\colspana<M>$ is the column span of the matrix $M$.
If matrices $M$ and $N$ are compatible for multiplication,
then $\colspana<M N> \subset \colspana<M>$.
(Furthermore,  $\colspana<M_1> \subset \colspana<M_2>$
if and only if $M_1 = M_2 N$ for some matrix $N$).
Hence, $\colspana<M M^\top> = \colspana<M>$ (to prove it,
we can use the identity $M = M M^\top \pinvp(M^\top)$).
{\sloppy\par}

Since the $n\times n$ covariance matrix $\varSigma$ is positive
semidefinite, for every $k \times n$ matrix $M$ the equality
$\colspana<M \varSigma M^\top> = \colspana<M \varSigma>$ holds true.
This can be proved with use of the matrix square root.

If what follows, for a fixed $(n+d)\times d$ matrix $X$ denote
\[
\Deltapm = C X \bigl(X^\top \varSigma X \bigr)^{\dagger}
X^\top \varSigma ,
\]
where $C$ is an $m \times (n+d)$ matrix,
$\varSigma$ is an $n \times n$ positive semidefinite matrix.

\begin{proof}[Proof of Proposition~\ref{prop-l588}.]
{\it 1, necessity. Relation \eqref{eq:nemset} is a necessary condition for compatibility
of the constraints in \eqref{TLS-fixX568}.}
Let $\Delta \, (I - P_{\varSigma}) = 0$ and $(C-\Delta) X = 0$
for some $m\times(n+d)$ matrix $\Delta$.
Due to $\Delta \, (I - P_{\varSigma}) = 0$,
$\Delta = M \varSigma$ for some matrix $M$.
Then $C X = \Delta X = M \varSigma X$, $X^\top C^\top = X^\top \varSigma M^\top$,
whence $\colspan(X^\top C^\top) \subset \colspan(X^\top \varSigma)$.

\paragraph{1, sufficiency. Relation \eqref{eq:nemset} is a sufficient condition for compatibility
of the constraints in \eqref{TLS-fixX568}}
Let $\colspan(X^\top C^\top) \subset \colspan(X^\top \varSigma)$.
Then $X^\top C^\top = X^\top \varSigma M$
for some matrix $M$.
The constraints $\Delta \, (I - P_{\varSigma}) = 0$, $(C-\Delta) X = 0$ are satisfied
for $\Delta = M^\top \varSigma$, so they are compatible.

\paragraph{2a, eqns.~\eqref{eq:prop-l588-2a1}. If the constraints are compatible, they are satisfied
for {$\Delta = \Deltapm$}} Indeed,
\[
\Deltapm \, (I - P_\varSigma) = C X \bigl(X^\top \varSigma X
\bigr)^{\dagger} X^\top \varSigma\, (I - P_\varSigma) = 0,
\]
since $\varSigma \, (I - P_\varSigma) = 0$.
If the constraints are compatible, then
\[
\colspan \bigl(X^\top \varSigma X \bigr) = \colspan
\bigl(X^\top \varSigma \bigr) \subset \colspan \bigl(X^\top
C^\top \bigr),
\]
whence
\begin{align*}
X^\top \varSigma X \bigl(X^\top \varSigma X
\bigr)^{\dagger} X^\top C^\top &= P_{X^\top \varSigma X}
X^\top C^\top = X^\top C^\top,
\\
\Deltapm X &= C X \bigl(X^\top \varSigma X \bigr)^{\dagger}
X^\top \varSigma X = C X,
\\
(C-\Deltapm) X &= 0.
\end{align*}

\paragraph{2a, eqn.~\eqref{eq:prop-l588-2a2} and 2b.
If the constraints are compatible,
then the constrained least element of {$\Delta \pinvb(\varSigma)
\Delta^\top$} is attained for $\Delta =
\Deltapm$} \textit{The least element is equal to\break $C X
\pinvp(X^\top \varSigma X) X^\top C^\top$.}
Let $\Delta$ satisfy the constraints,
which imply $\Delta P_\varSigma = \Delta$ and $\Delta
X = C X$. Expand the product
\begin{align}
(\Delta - \Deltapm) \pinvb(\varSigma) (\Delta - \Deltapm)^\top
= \Delta \pinvb(\varSigma) \Delta^\top - \Deltapm \pinvb(\varSigma)
\Delta^\top - \Delta \pinvb(\varSigma) \Deltapm^\top +
\Deltapm \pinvb(\varSigma) \Deltapm^\top. \label{eq:momo}
\end{align}
Simplify the expressions for three (of four) summands:
\begin{align*}
\Delta \pinvb(\varSigma) \Deltapm^\top &= \Delta \pinvb(\varSigma)
\varSigma X \bigl(X^\top \varSigma X \bigr)^{\dagger}
X^\top C^\top
\\
&= \Delta P_\varSigma X \bigl(X^\top \varSigma X
\bigr)^{\dagger} X^\top C^\top
\\
&= \Delta X \bigl(X^\top \varSigma X \bigr)^{\dagger}
X^\top C^\top = C X \bigl(X^\top \varSigma X
\bigr)^{\dagger} X^\top C^\top.
\end{align*}
Applying matrix transposition to both sides of the last chain of equalities,
we get
\begin{equation*}
\Deltapm \pinvb(\varSigma) \Delta^\top = C X \bigl(X^\top
\varSigma X \bigr)^{\dagger} X^\top C^\top.
\end{equation*}
For the last summand,
\begin{align*}
\Deltapm \pinvb(\varSigma) \Deltapm^\top &= C X \bigl(X^\top
\varSigma X \bigr)^{\dagger} X^\top \varSigma \pinvb(\varSigma)
\varSigma X \bigl(X^\top \varSigma X \bigr)^{\dagger}
X^\top C^\top
\\
&= C X \bigl(X^\top \varSigma X \bigr)^{\dagger} X^\top
\varSigma X \bigl(X^\top \varSigma X \bigr)^{\dagger}
X^\top C^\top
\\
&= C X \bigl(X^\top \varSigma X \bigr)^{\dagger} X^\top
C^\top.
\end{align*}
Thus,
\eqref{eq:momo}
implies that
\begin{equation}
\Delta \pinvb(\varSigma) \Delta^\top = (\Delta - \Deltapm) \pinvb(
\varSigma) (\Delta - \Deltapm)^\top + C X \bigl(X^\top
\varSigma X \bigr)^{\dagger} X^\top C^\top .
\label{eq:momoshort1}
\end{equation}
Hence
\[
\Delta \pinvb(\varSigma) \Delta^\top \ge C X \bigl(X^\top
\varSigma X \bigr)^{\dagger} X^\top C^\top ,
\]
and statement 2b of the theorem is proved.
For $\Delta = \Deltapm$, equality is attained,
which coincides with \eqref{eq:prop-l588-2a2}.

\paragraph{Remark~\ref{rem:6.2-1}. The least point is attained for a unique $\Delta$}
It is enough to show
that if $\Delta$ satisfies the constraints and
$\Delta \pinvb(\varSigma) \Delta^\top = C X \pinvp(X^\top \varSigma X) X^\top C^\top$,
then $\Delta = \Deltapm$.

Indeed, if $\Delta$ satisfies the constraints
$\Delta \, (I - P_{\varSigma}) = 0$ and $(C-\Delta) X = 0$,
and
$\Delta \pinvb(\varSigma) \Delta^\top = C X \pinvp(X^\top \varSigma X) X^\top C^\top$, then due to \eqref{eq:momoshort1}
\[
(\Delta - \Deltapm) \pinvb(\varSigma) (\Delta - \Deltapm)^\top = 0 .
\]
As $\pinvb(\varSigma)$ is a positive semidefinite matrix,
$(\Delta - \Deltapm) \pinvb(\varSigma) = 0$ and
$(\Delta - \Deltapm) P_\varSigma = (\Delta - \Deltapm) \pinvb(\varSigma) \varSigma = 0$.
Add the equality $\Delta \, (I - P_\varSigma)=0$ (which is one of the constraints)
and subtract the equality $\Deltapm \, (I - P_\varSigma)=0$ (which is one
of equalities \eqref{eq:prop-l588-2a1} and holds true due part 2a
of the theorem).  Obtain
\begin{equation*}
\Delta - \Deltapm = (\Delta - \Deltapm) P_\varSigma + \Delta \, (I -
P_\varSigma) - \Deltapm \, (I - P_\varSigma) = 0 ,
\end{equation*}
whence $\Delta = \Deltapm$.
\end{proof}

\begin{proof}[Proof of Proposition~\ref{prop:simpifdef}.]
{\it 1. Necessity.}
Since the matrices
$C^\top C$ and $\varSigma$ are positive semidefinite,
the matrix pencil
$\langle C^\top C, \varSigma\rangle$
is definite if and only if the matrix
$C^\top C + \varSigma$ is positive semidefinite.
Thus, if
the matrix pencil
$\langle C^\top C, \varSigma\rangle$
is definite, then
the matrix $C^\top C + \varSigma$ is positive definite.
As the columns of the matrix $X$ are linearly independent,
the matrix $X (C^\top C + \varSigma) X^\top =
X^\top C^\top C X  + X^\top \varSigma X$
is positive definite as well,
whence
$\colspan(X^\top C^\top C X  + X^\top \varSigma X) = \mathbb{R}^{n}$.

If the constraints are compatible, then the condition \eqref{eq:nemset} holds
true, whence
\begin{align*}
\mathbb{R}^n &= \colspan \bigl\langle X^\top
C^\top C X + X^\top \varSigma X \bigr\rangle
\\
& \subset \colspan \bigl\langle X^\top C^\top C X \bigr
\rangle + \colspan \bigl\langle X^\top \varSigma X \bigr\rangle
\\
&= \colspan \bigl\langle X^\top C^\top \bigr\rangle +
\colspan \bigl\langle X^\top \varSigma \bigr\rangle
\\
&= \colspan \bigl\langle X^\top \varSigma \bigr\rangle = \colspan
\bigl\langle X^\top \varSigma X \bigr\rangle.
\end{align*}
Since  $\colspan\langle X^\top \varSigma X \rangle = \mathbb{R}^n$,
the matrix $X^\top \varSigma X$ is nonsingular.

\paragraph{2. Sufficiency}
If the matrix $X^\top \varSigma X$ is nonsingular,
then
\[
\colspan \bigl\langle X^\top \varSigma \bigr\rangle = \colspan \bigl
\langle X^\top \varSigma X \bigr\rangle = \mathbb{R}^n
\supset \colspan \bigl\langle X^\top C^\top \bigr\rangle.
\]
Thus the condition \eqref{eq:nemset}, which is the necessary and sufficient
condition for compatibility of the constraints, holds true.
\end{proof}

\begin{proof}[Proof of Proposition~\ref{prop:6.4}.]
Construct simultaneous diagonalization of matrices\break
$X C C^\top X^\top$ and $X \varSigma X^\top$
(according to Theorem~\ref{thm-gedpsp})
that satisfies Remark \ref{rem:remark5.2-1}:
\[
X^\top C^\top C X = \bigl(T^{-1}
\bigr)^\top \varLambda T^{-1}, \qquad X^\top
\varSigma X = \bigl(T^{-1} \bigr)^\top \mathrm{M}
T^{-1}.
\]
Notations $\varLambda$, $\mathrm{M}$,
$T = \begin{bmatrix} T_1 & T_2 \end{bmatrix}$,
$\mu_i$, $\lambda_i$, $\nu_i$
are taken from Theorem~\ref{thm-gedpsp},
Remark~\ref{rem:6.2-1},
and Lemma~\ref{lem:psdgsvd1}.

The subspace
\[
\colspan \bigl\langle X^\top C^\top \bigr\rangle = \colspan
\bigl\langle X^\top C^\top C X \bigr\rangle = \colspan \bigl
\langle \bigl(T^{-1} \bigr)^\top \varLambda T^{-1}
\bigr\rangle = \colspan \bigl\langle \bigl(T^{-1} \bigr)^\top
\varLambda \bigr\rangle
\]
is spanned by columns of the matrix $(T^{-1})^\top$ that correspond to
nonzero $\lambda_i$'s.
Similarly, the subspace
$\colspan\langle X^\top \varSigma\rangle =
\colspan\langle (T^{-1})^\top \textrm{M}\rangle$ is spanned
by columns of the matrix $(T^{-1})^\top$ that correspond to non-zero
$\mu_i$'s.  Note that the columns of the matrix $(T^{-1})^\top$
are linearly independent.
The condition $\colspan\langle
X^\top C^\top\rangle \subset \colspan\langle X^\top \varSigma\rangle$
is satisfied if and only if
$\lambda_i \neq 0 $ for all $i$ such that  $\mu_i \neq 0$ (that is $\nu_i<\infty$,
$i=1,\ldots,d$, where notation $\nu_i = \lambda_i / \nu_i$ comes from Theorem~\ref{thm-gedpsp}).
Thus, due to Proposition~\ref{prop:forremark5.2-1},{\sloppy
\[
\bigl(X^\top \varSigma X \bigr)^{\dagger} = T^{}
\pinvb( \textrm{M}) T^{\top} .
\]}\relax

Construct the chain of equalities:
\begin{align*}
&\min_{\substack{\Delta \, (I - P_\varSigma) = 0\\
(C - \Delta) X = 0}} \lambda_{k+m-d} \bigl(\Delta \pinvb(
\varSigma) \Delta^\top \bigr)
\\
&\quad\stackrel{{\rm(a)}} {=} \lambda_{k+m-d} \bigl(C X \bigl(X^\top
\varSigma X \bigr)^{\dagger} X^\top C^\top \bigr) =
\lambda_{k+m-d} \bigl(C X \, T^{} \pinvb(\mathrm{M})
T^{\top} \, X^\top C^\top \bigr)
\\
&\quad\stackrel{{\rm(b)}} {=} \lambda_{k} \bigl( \pinvb(\textrm{M})
T^{\top} X^\top C^\top C X T^{} \bigr) =
\lambda_{k} \bigl( \pinvb(\textrm{M}) \varLambda \bigr) =
\nu_k
\\
&\quad\stackrel{{\rm(c)}} {=} \min \bigl\{ \lambda\ge 0 : \mbox{``}\exists
V_1{\subset} \mathbb{R}^d,\; \dim V_1{=}k :
\bigl(X^\top C^\top C X- \lambda X^\top \varSigma X
\bigr)|_{V_1} \le 0 \mbox{''} \bigr\}
\\
&\quad\stackrel{{\rm(d)}} {=} \min \bigl\{ \lambda\ge 0 : \mbox{``}\exists V{\subset}
\colspana<X>,\; \dim V{=}k : \bigl(C^\top C - \lambda \varSigma
\bigr)|_V \le 0 \mbox{''} \bigr\}.
\end{align*}
Equality (a) follows from \ref{prop-l588} because the matrix
$C X \pinvp(X^\top \varSigma X) X^\top C^\top$ is the least value
of the expression $\Delta \pinvb(\varSigma) \Delta^\top$ with
constraints $(I - P_\varSigma) \Delta^\top = 0$ and
$(C - \Delta) X = 0$.

Equality (b) follows from the relation between
characteristic polynomials of two products
of two rectangular matrices:
\[
\chi_{C X T \, \pinvb(\mathrm{M}) T^{\top} X^\top C^\top} (\lambda) = (-\lambda)^{m-d} \chi_{\pinvb(\mathrm{M}) T^{\top} X^\top C^\top \, C X T}
(\lambda)
\]
because $C X T$ is an $m \times d$ matrix
and $\pinvb(\mathrm{M}) T^{\top} X^\top
C^\top$ is a $d \times m$ matrix.
Thus, the matrix $C X T \, \pinvb(\mathrm{M}) T^{\top}
X^\top C^\top$ has all the eigenvalues of the matrix
$\pinvb(\mathrm{M}) T^{\top} X^\top C^\top \times C X T =
\pinvb(\mathrm{M}) \varLambda$ and, besides them,
the eigenvalue $0$ of multiplicity $m-d$.
All these eigenvalues are nonnegative.

Equality (c) holds true due to Lemma~\ref{lem:psdgsvd1}.

Since the columns of the matrix $X$ are linearly independent,
there is a one-to-one correspondence between subspaces
of $\colspana<X>$ and of $\mathbb{R}^d$:
if $V$ is a subspace of $\colspana<X>$, then there
exists a unique subspace $V_1 \subset \mathbb{R}^d$,
and for those $V$ and $V_1$,
\begin{itemize}
\item $\dim V = \dim V_1$;
\item
the restriction of the quadratic form $C^\top C - \lambda \varSigma$ to
the subspace $V$ is negative semidefinite if and only if
the restriction of the quadratic form $X^\top C^\top C X - \lambda X^\top \varSigma X$
to the subspace $V_1$ is negative semidefinite.
\end{itemize}
Hence, equality (d) holds true.

Equation \eqref{eq-prop64} is proved. As to
Remark~\ref{remark:forprop6.4}, the minimum in the left-hand side
of \eqref{eq-prop64} is attained for $\Delta = \Deltapm$. The
minimum in the right-hand side of \eqref{eq-prop64} is attained if
the subspace $V$ is a linear span of $k$ columns of the matrix
$X T$ that correspond to the $k$ least $\nu_i$'s.
\end{proof}

\begin{proof}[Proof of Proposition~\ref{prop:6.5}.]
By Lemma~\ref{lem:psdgsvd1} and Proposition~\ref{prop:6.4}, the
inequality \eqref{neq0l709} is equivalent to the obvious
inequality
\begin{align*}
&\min \bigl\{ \lambda\ge 0 : \mbox{``}\exists V{\subset} \colspana<X>,\; \dim
V{=}k : \bigl(C^\top C - \lambda \varSigma \bigr)|_V \le 0
\mbox{''} \bigr\}
\\
&\quad\ge \min \bigl\{ \lambda\ge 0 \mathrel| \text{``}\exists V, \ \dim V = k : (A -
\lambda B)|_V \le 0 \text{''} \bigr\} .
\end{align*}

From the proof it follows that if $\nu_d = \infty$,
then for any $(n+d)\times d$ matrix $X$ of rank $d$
the constraints in \eqref{TLS-fixX568} are not compatible.

Now prove that if $\nu_d < \infty$ and $X = [u_1, u_2, \ldots,
u_d]$, then the inequality in Proposition~\ref{prop:6.5} becomes
an equality.  Indeed, then the constraints in \eqref{TLS-fixX568}
are compatible because they are satisfied for
$\Delta = C T D T^{-1}$, where
\begin{align*}
D &= \diag(d_1, d_2, \ldots, d_{d+n}),
\\
d_k &= \begin{cases}
1 & \mbox{if $\mu_k >0$ and $k\le d$},\\
0 & \mbox{if $\mu_k =0$ or $k > d$}.
\end{cases}
\end{align*}
By Proposition \ref{prop-l588}
\begin{align*}
\min_{\substack{ \Delta (I - P_\varSigma) = 0 \\
(C - \Delta) X = 0}} \lambda_{k+m-d} \bigl(\Delta \pinvb(
\varSigma) \Delta^\top \bigr) &= \lambda_{k+m-d} \bigl(C X
\bigl(X^\top \varSigma X \bigr)^{\dagger} X^\top
C^\top \bigr)
\\
&= \lambda_k \bigl( \bigl(X^\top \varSigma X
\bigr)^{\dagger} X^\top C^\top C X \bigr)
\\
&= \lambda_k \bigl(\pinvbb( \mathrm{M}_d )
\varLambda_d \bigr) = \nu_k,
\end{align*}
where $\mathrm{M}_d = \diag(\mu_1,\ldots,\mu_d)$ and $\varLambda_d =
\diag(\lambda_1,\ldots,\lambda_d)$ are principal submatrices of
the matrices $\mathrm{M}$ and $\varLambda$, respectively.
\end{proof}

\begin{proof}[Proof of Proposition~\ref{prop:gs6.6}]
For every matrix $\Delta$ that satisfies the constraints\break
$(I - P_{\varSigma}) \Delta = 0$ and $\rank(C-\Delta) \le n$,
there exists an $(n+d) \times d$ matrix $X$ of rank $d$
such that $(C-\Delta) X = 0$.
Assuming that such $\Delta$ exists, we get $\nu < +\infty$
because the equalities
$\nu = +\infty$, $(I - P_{\varSigma}) \Delta = 0$,
$\rank X = d$, and $(C-\Delta) X = 0$
cannot hold simultaneously.

We have
\begin{align}
\big\| \Delta \, \bigl(\varSigma^{1/2} \bigr)^{\dagger}
\big\|_F^2 &= \trace \bigl( \Delta \pinvb(\varSigma)
\Delta^\top \bigr) = \sum_{i=1}^{m}
\lambda_{i} \bigl( \Delta \pinvb(\varSigma) \Delta^\top
\bigr)
\nonumber
\\
&= \sum_{i=1}^{m-d} \lambda_i
\bigl( \Delta \pinvb(\varSigma) \Delta^\top \bigr) + \sum
_{k=1}^{d} \lambda_{k+m-d} \bigl( \Delta
\pinvp(\varSigma) \Delta^\top \bigr)
\nonumber
\\
& \ge 0 + \sum_{k=1}^d \nu_k,
\label{eq:L1585}
\end{align}
where the inequalities hold true due to positive semidefiniteness of $\varSigma$
and due to Proposition~\ref{prop:6.5}.

If $\nu_d = \infty$, than the constraints $\Delta \, (I - P_\varSigma) = 0$
and $\rank(C-\Delta) \le n$ are not compatible.
Otherwise,
the equality in \eqref{eq:L1585} is attained for
$\Delta = \Deltaem := C X \pinvp(X^\top \varSigma X)\times X^\top \varSigma$,
where the matrix $X$ consists of first $d$ rows of the matrix $T$,
where $T$ comes from decomposition \eqref{eq:diddecomposCCS}.\vadjust{\goodbreak}

Thus, if the constraints in \eqref{eqTLS118} are compatible,
then the minimum is equal to $ ( \sum_{k=1}^d \nu_k  )^{1/2}$
and is attained at $\Deltaem$.
Otherwise, if the constraints are incompatible, then
by contraposition to the second statement of Proposition~\ref{prop:6.5}
$\nu_d = +\infty$ and
$ ( \sum_{k=1}^d \nu_k  )^{1/2} = +\infty$.

If the minimum in \eqref{eqTLS118} is attained at $\Delta$,
then the inequality \eqref{eq:L1585} becomes an equality,
whence
\begin{align}
\label{eq0:L1605} \lambda_i \bigl(\Delta \pinvb(\varSigma)
\Delta^\top \bigr) &= 0, \quad i=1,\ldots,m-d;
\\
\label{eq0:L1606} \lambda_{k+m-d} \bigl(\Delta \pinvb(\varSigma)
\Delta^\top \bigr)& = \nu_k, \quad k=1,\ldots,d;
\end{align}
in particular,
\[
\lambda_{\max} \bigl(\Delta \pinvb(\varSigma) \Delta^\top
\bigr) = \nu_d.
\]
Remember that $\nu_d$ is the minimum value in \eqref{eqTLS220}.
Thus, the minimum in \eqref{eqTLS220} is attained at $\Delta$,
although it may be also attained elsewhere.
\end{proof}

\begin{proof}[Proof of Proposition~\ref{prop:uniprop}]
\textit{1.}\quad
The monotonicity follows from results of \cite{Mirsky1960}.
The unitarily invariant norm is a symmetric
gauge function of the singular values,
and the symmetric gauge function is
monotonous in non-negative inputs
(see \cite[ineq.~(2.5)]{Mirsky1960}).

\paragraph{2}
Let $\sigma_1(M_1) < \sigma_1(M_2)$ and $\sigma_i(M_1) \le \sigma_i(M_2)$ for all $i=2,\ldots,\min(m,n)$.
Then for all  $k=1,\ldots,\min(m,n)$
\[
\sum_{i=1}^k \sigma_i(M_1)
\le \frac
{\sigma_1(M_1) + \sigma_2(M_1) + \cdots + \sigma_{\min(m,n)}(M_1)}{
\sigma_1(M_2) + \sigma_2(M_1) + \cdots + \sigma_{\min(m,n)}(M_1)} \sum_{i=1}^k
\sigma_i(M_2) .
\]
Due to Ky Fan \cite[Theorem 4]{KyFan1951} or \cite[Theorem 1]{Mirsky1960}, this implies that
\[
\uin{M_1} \le \frac
{\sigma_1(M_1) + \sigma_2(M_1) + \cdots + \sigma_{\min(m,n)}(M_1)}{
\sigma_1(M_2) + \sigma_2(M_1) + \cdots + \sigma_{\min(m,n)}(M_1)} \uin{M_2}.
\]
Since
\[
0 \le \frac
{\sigma_1(M_1) + \sigma_2(M_1) + \cdots + \sigma_{\min(m,n)}(M_1)}{
\sigma_1(M_2) + \sigma_2(M_1) + \cdots + \sigma_{\min(m,n)}(M_1)} < 1 \quad \mbox{and} \quad \uin{M_2}>0,
\]
$\uin{M_1} < \uin{M_2}$.
\end{proof}

\begin{proof}[Proof of Proposition~\ref{prop:gs6.6uin}]
Notice that the optimization problems
\eqref{eqTLS118}, \eqref{eqTLS220}, and
\eqref{eqTLS118uin} have the same constraints.
If the constraints are compatible,
then the minimum in \eqref{eqTLS118}
is attained for
$\Delta = \Deltaem := C X \pinvp(X^\top \varSigma X) X^\top \varSigma$.

\paragraph{\ref{prop:gs6.6uin:part1}}
Let $\DeltaFro$ minimize \eqref{eqTLS118},
and let $\DeltaFeas$ satisfy the constraints.
Then, by Proposition~\ref{prop:6.5} and
eqn.~\eqref{eq0:L1606},
\begin{align*}
\lambda_{k+m-d} \bigl(\DeltaFro \pinvb(\varSigma) \DeltaFro^\top
\bigr) &= \nu_k \le \lambda_{k+m-d} \bigl(\DeltaFeas \pinvb(
\varSigma) \DeltaFeas^\top \bigr), \quad k=1,\ldots,d;
\\
\sigma_{d+1-k} \bigl(\DeltaFro \bigl(\varSigma^{1/2}
\bigr)^{\dagger} \bigr) &\le \sigma_{d+1-k} \bigl(\DeltaFeas \bigl(
\varSigma^{1/2} \bigr)^{\dagger} \bigr), \\
 k&=\max(1,d{+}1{-}m),
\ldots,d;
\\
\sigma_j \bigl(\DeltaFro \bigl(\varSigma^{1/2}
\bigr)^{\dagger} \bigr) &\le \sigma_j \bigl(\DeltaFeas \bigl(
\varSigma^{1/2} \bigr)^{\dagger} \bigr), \quad j=1,\ldots,
\min(d,m);
\end{align*}
by eqn.~\eqref{eq0:L1605}
\begin{align*}
\lambda_i \bigl(\DeltaFro \pinvb(\varSigma) \DeltaFro^\top
\bigr) &= 0, \quad i=1,\ldots,m-d,
\\
\sigma_{m+1-i} \bigl(\DeltaFro \bigl(\varSigma^{1/2}
\bigr)^{\dagger} \bigr) &= 0 \le \sigma_{m+1-i} \bigl(\DeltaFeas
\bigl( \varSigma^{1/2} \bigr)^{\dagger} \bigr) , \quad i \le m-d;
\\
\sigma_j \bigl(\DeltaFro \bigl(\varSigma^{1/2}
\bigr)^{\dagger} \bigr) &= 0 \le \sigma_j \bigl(\DeltaFeas \bigl(
\varSigma^{1/2} \bigr)^{\dagger} \bigr), \quad d+1 \le j \le
\min(m,\:n+d).
\end{align*}
Thus
\begin{equation}
\sigma_j \bigl(\DeltaFro \bigl(\varSigma^{1/2}
\bigr)^{\dagger} \bigr) \le \sigma_j \bigl(\DeltaFeas \bigl(
\varSigma^{1/2} \bigr)^{\dagger} \bigr) \quad \mbox{for all $j \le
\min(m,n+d)$}, \label{neq:sigi}
\end{equation}
whence by Proposition~\ref{prop:uniprop}
$\uin{\DeltaFro \pinvp(\varSigma^{1/2})} \le
\uin{\DeltaFeas \pinvp(\varSigma^{1/2})}$.
Thus $\DeltaFro$ indeed minimizes \eqref{eqTLS118uin}.

\paragraph{\ref{prop:gs6.6uin:part2}}
Let $\DeltaUin$ minimize \eqref{eqTLS118uin},
so the constraints are compatible.
Then $\Deltaem$ minimizes both \eqref{eqTLS118}
and \eqref{eqTLS220}, see Proposition~\ref{prop:gs6.6}.
Thus,
\[
\big\|\DeltaUin \bigl(\varSigma^{1/2} \bigr)^{\dagger}\big\|_{\rm U} \le
\big\|\Deltaem \bigl(\varSigma^{1/2} \bigr)^{\dagger}\big\|_{\rm U},
\]
and by \eqref{neq:sigi}
\[
\sigma_j \bigl(\Deltaem \bigl(\varSigma^{1/2}
\bigr)^{\dagger} \bigr) \le \sigma_j \bigl(\DeltaUin \bigl(
\varSigma^{1/2} \bigr)^{\dagger} \bigr) \quad \mbox{for all $j \le
\min(m,n+d)$}.
\]
Then by Proposition~\ref{prop:uniprop}
(contraposition to part \ref{prop:uniprop:part2})
\begin{align*}
\sigma_1 \bigl(\Deltaem \bigl(\varSigma^{1/2}
\bigr)^{\dagger} \bigr) &= \sigma_1 \bigl(\DeltaUin \bigl(
\varSigma^{1/2} \bigr)^{\dagger} \bigr),
\\
\min_{\substack{\Delta (I-P_\varSigma) = 0\\
\rank(C-\Delta) \le n}} \bigl(\Delta \pinvb(\varSigma)
\Delta^\top \bigr) &= \lambda_{\max} \bigl(\Deltaem \pinvb(
\varSigma)\Deltaem^\top \bigr) = \lambda_{\max} \bigl(\DeltaUin
\pinvb(\varSigma)\DeltaUin^\top \bigr).
\end{align*}
Thus $\DeltaUin$ indeed minimizes \eqref{eqTLS220}.
\end{proof}

\begin{proof}[Proof of Proposition \ref{prop:gep6.7}.]
We can assume that $\mu_i \in \{0, 1\}$ in
\eqref{eq:diddecomposCCS}.

The set of matrices $\Delta$ that satisfy \eqref{eqTLSX124}
depends only on
$\colspan\langle \widehat X_{\rm ext} \rangle$
and does not change after linear transformations of columns of
$\widehat X_{\rm ext}$.

By linear transformations of the columns, the matrix
$T^{-1} \widehat X_{\rm ext}$
can be transformed to the reduced column echelon form.
Thus, there exists such an $(n+d) \times d$
matrix $T_5$ in the column echelon form that
\[
\colspan \langle \widehat X_{\rm ext} \rangle = \colspan \langle T
T_5 \rangle.
\]
Notice that $\rank T_5 = \rank \widehat X_{\rm ext} = d$.

Denote by $d_*$ and $d^*$ the first and the last of the indices $i$
such that $\nu_i = \nu_d$. Then
\begin{align*}
\nu_{d_*-1} &< \mbox{}\nu_{d_*} \quad \mbox{if $d_* \ge 2$;}
\\
\nu_{d_*} &= \cdots = \nu_d = \cdots =
\nu_{d^*};
\\
\nu_{d^*} &< \nu_{d^*+1} \quad \mbox{if $d^* < n+d$.}
\end{align*}

\paragraph{Necessity}
Let $\Delta$ be a point where the constrained minimum in \eqref{eqTLS118} is attained.
Then equalities \eqref{eq0:L1605}--\eqref{eq0:L1606} from the proof of Proposition~\ref{prop:gs6.6}
hold true. Thus, due to Propositions \ref{prop:6.4} and \ref{prop:6.5},
for all  $k=1,\ldots, d$
\[
\min \bigl\{ \lambda\ge 0 : \mbox{``}\exists V{\subset} \colspana<\widehat
X_{\rm ext}>,\; \dim V{=}k : \bigl(C^\top C - \lambda \varSigma
\bigr)|_V \le 0 \mbox{''} \bigr\} =
\nu_k.
\]

According to \ref{remark:forprop6.4}, we can construct a stack of subspaces
\[
V_1 \subset V_2 \subset \cdots \subset V_d
= \colspan\langle \widehat X_{\rm ext} \rangle,
\]
such that $\dim V_k = k$ and
the restriction of the quadratic form $C^\top C - \nu_k \varSigma$ to
the subspace $V_k$ is negative semidefinite, for all $k \le d$.

Now, prove that
\begin{equation}
\label{neqc:L1675} \colspana<{u_i : \nu_i<
\nu_d}> \subset \colspana<\widehat X_{\rm ext}>.
\end{equation}
Suppose the contrary: $\colspana<{u_i : \nu_i<\nu_d}>  \not\subset
\colspana<\widehat X_{\rm ext}>$.
Then there exists $i < d_* $ such that
$u_i \notin \colspana<\widehat X_{\rm ext}>$,
and, as a consequence,
$u_i \notin V_{\max \{j \;:\; \nu_j \le \nu_i\}}$.
Find the least $k$ such that
$u_k \notin V_{\max \{j \;:\; \nu_j \le \nu_k\}}$.
Let $k_*$ and $k^*$ denote the first and the last indices $i$ such that
$\nu_i = \nu_k$.
Then $1 \le k_* \le k \le k^* < d_* \le d \le d^*$
and $u_k \notin V_{k^*}$.

Since $\colspana<u_1, \ldots, u_{k_*-1}> \subset V_{k_*-1}
\subset V_{k^*}$,
\begin{align*}
\dim \bigl(V_{k^*} \cap \colspana<u_{k_*}, \ldots,
u_{n+d}>\bigr) &= \dim \bigl(V_{k^*} / \colspana<u_{1},
\ldots, u_{k_*-1}>\bigr)
\\
&= \dim V_{k^*} - (k_*-1) = k^* - k_* + 1.
\end{align*}
Since $u_k \notin V_{k^*}$,
$u_k \notin V_{k^*} \cap \colspana<u_{k_*}, \ldots, u_{n+d}>$,
\[
\dim\operatorname{span}\big\langle V_{k^*} \cap \colspana<u_{k_*}, \ldots
u_{n+d}>, \: u_k\big\rangle = k^* - k_* + 2.
\]

Now, consider the $(n+d-k_*+1)\times(n+d-k_*+1)$ diagonal matrix
\begin{align*}
D(\lambda) &:= [u_{k_*},\ldots,u_{n+d}]^\top
\bigl(C^\top C - \lambda \varSigma \bigr) [u_{k_*},
\ldots,u_{n+d}]
\\
&= \diag(\lambda_j - \lambda \mu_j, \; j=k_*,
\ldots,n{+}d)
\end{align*}
for various $\lambda$.
For $\lambda = \nu_k = \nu_{k_*}$, the \querymark{Q12}inequality
$\lambda_j - \nu_k \mu_j \ge 0$ holds true for all $j \ge k_*$,
so the matrix $D(\nu_k)$ is positive semidefinite.
For $\lambda = \nu_{k^*+1}$, the inequality $\lambda_j - \nu_{k^*+1} \mu_j \le 0$
holds true for all $k_* \le j \le k^*+1$, so there exists a
$k^* - k_* + 2$-dimensional
subspace of $\mathbb{R}^{n+d-k_*+1}$ where the quadratic form $D(\nu_{k^*+1})$
is negative semidefinite.
For $\lambda < \nu_{k^*+1}$, the inequality $\lambda_j - \lambda \mu_j > 0$
holds true for all $k^*+1 \le j \le n+d$.  Therefore, there exists an  $n+d-k^*$-dimensional
subspace of $\mathbb{R}^{n+d-k_*+1}$ where the quadratic form $D(\lambda)$
is positive definite.
According to the proof of Sylvester's law of inertia, there is no subspace
of dimension $k^*  - k_* + 2 = (n+d-k_*+1) - (n+d-k^*) + 1$
where the quadratic form $D(\lambda)$ is negative semidefinite.
Thus, $\nu_{k^*+1}$ is the least number such that there exists a
$k^*  - k_* + 2$-dimensional subspace where the quadratic form $D(\lambda)$
is negative semidefinite.

Similarly to the chain of equalities in the proof of Proposition~\ref{prop:6.4},
\begin{align}
\label{eq:1707} \nu_{k^*+1} & = \min \bigl\{\lambda \ge 0 : \mbox{``}
\exists V_1, \; \dim V_1 = k^* - k_* + 2\; : \; D(
\lambda) \vert _{V_1} \le 0 \mbox{''} \bigr\}
\nonumber
\\
&= \min \bigl\{\lambda \ge 0 : \mbox{``}\exists V_1, \; \dim
V_1 = k^* - k_* + 2\; :
\nonumber
\\
& \qquad [u_{k_*},\ldots,u_{n+d}]^\top
\bigl(C^\top C - \lambda \varSigma \bigr) [u_{k_*},
\ldots,u_{n+d}] \vert _{V_1} \le 0 \mbox{''}
\bigr\}
\nonumber
\\
&= \min \bigl\{\lambda \ge 0 : \mbox{``}\exists V_1, \; V \subset
\colspana<u_{k_*},\ldots,u_{n+d}>, \; \dim V = k^* - k_* + 2\;
:
\nonumber
\\
& \qquad \bigl(C^\top C - \lambda \varSigma \bigr) \vert
_{V} \le 0 \mbox{''} \bigr\}
\end{align}

The restriction of the quadratic form $C^\top C - \nu_k \varSigma$ to
the subspace  $\colspana<u_{k_*},\allowbreak \ldots,\allowbreak u_{n+d}>$ is positive semidefinite
because $[u_{k_*}, \ldots, u_{n+d}]^\top  (C^\top C - \nu_k \varSigma) \times \allowbreak
[u_{k_*}, \ldots,\allowbreak u_{n+d}] = D(\nu_k)$
is a positive semidefinite diagonal matrix.
Then
\begin{align}
\label{eq:L1722} &\bigl\{v \in \colspana<u_{k_*}, \ldots
u_{n+d}> : v^\top \bigl(C^\top C - \nu_k
\varSigma \bigr) v \le 0 \bigr\}\notag
\\
&\quad= \bigl\{v \in \colspana<u_{k_*}, \ldots, u_{n+d}> :
\bigl(C^\top C - \nu_k \varSigma \bigr) v = 0 \bigr\}
\end{align}
is a linear subspace.
Since this subspace contains the subspace
$V_k \cap \allowbreak \colspana<u_{k_*},\allowbreak \ldots, \allowbreak u_{n+d}>$
(as the quadratic form $C^\top C - \nu_k \varSigma$
is negative semidefinite on $V_k$)
and the vector $u_k$
(as $u_k \in \colspana<u_{k_*},\allowbreak
\ldots,\allowbreak u_{n+d}>$
and $u_k^\top (C^\top C - \nu_k \varSigma) u_k^{}
= \lambda_k - \nu_k \mu_k = 0$),
it contains $\colspan\langle V_{k^*} \cap\allowbreak  \colspana<u_{k_*},\allowbreak
\ldots,\allowbreak u_{n+d}>, \: u_k\rangle$. But, as $\nu_k < \nu_{k^*+1}$,
this contradicts \eqref{eq:1707}.

Now, prove that
\begin{equation}
\label{neqc:L1740} \colspana<\widehat X_{\rm ext}> \subset
\colspana<{u_i : \nu_i \le \nu_d}>.
\end{equation}

Due to \eqref{neqc:L1675},
\[
\colspana<\widehat X_{\rm ext}> = \operatorname{span}\big\langle \colspana<\widehat
X_{\rm ext}> \cap \colspana<u_{d_*},\ldots,u_{n+d}>, \:
u_1, \ldots, u_{d_*-1} \big\rangle.
\]

Hence, to prove \eqref{neqc:L1740}, it is enough to
show that
\begin{equation}
\label{neqc:L1763} \colspana<\widehat X_{\rm ext}> \cap \colspana<u_{d_*}
, \ldots, \nu_{n+d}> \subset \colspana<u_{d_*} , \ldots,
\nu_{d^*}>.
\end{equation}

The restriction of the quadratic form $C^\top C - \nu_d \varSigma$
to the subspace $\colspana<u_{d_*},\allowbreak\ldots,u_{n+d}>$ is positive
semidefinite. Hence
\begin{align}
\label{eq:subspace3554} &\bigl\{v \in \colspana<u_{d_*}, \ldots
u_{n+d}> : v^\top \bigl(C^\top C - \nu_d
\varSigma \bigr) v \le 0 \bigr\}\notag
\\
&\quad= \bigl\{v \in \colspana<u_{d_*}, \ldots u_{n+d}> :
v^\top \bigl(C^\top C - \nu_d \varSigma \bigr) v
= 0 \bigr\}
\end{align}
is a linear subspace (see
equation \eqref{eq:L1722}). This subspace contains
the subspaces
$\colspana<\widehat X_{\rm ext}> \cap \colspana<u_{d_*} , \ldots,
\nu_{n+d}>$ and $\colspana<u_{d_*} , \ldots,  \nu_{d^*}>$.
Denote the dimension of the subspace \eqref{eq:subspace3554}:
\sloppy{
\[
d_2 = \dim \bigl\{v \in \colspana<u_{d_*}, \ldots
u_{n+d}> : v^\top \bigl(C^\top C - \nu_d
\varSigma \bigr) v = 0 \bigr\}.
\]}\relax
If \eqref{neqc:L1763} does not hold,
then $d_2 > d^* - d_* + 1$;
$d_2 \ge d^* - d_* + 2$.
Then
\[
\exists V \subset\colspana<u_{d_*}, \ldots u_{n+d}> ,\; \dim
V = d_2 \; : \; \bigl(C^\top C - \nu_d
\varSigma \bigr)|_V \le 0
\]
(as an instance of such a subspace $V$, we can take the one defined in \eqref{eq:subspace3554}).
Then, taking a $d^* - d_* + 2$-dimensional subspace of $V$,
we get
\[
\exists V \subset\colspana<u_{d_*}, \ldots u_{n+d}> ,\; \dim
V = d^* - d_* + 2 \; : \; \bigl(C^\top C - \nu_d \varSigma
\bigr)|_V \le 0.
\]
Due to \eqref{eq:1707} (for $k=d$),
$\nu_{d^*+1} \le \nu_d$,
which does not hold true.

Assuming the contrary to \eqref{neqc:L1763},
we got a contradiction.
Hence, \eqref{neqc:L1763} and \eqref{neqc:L1740} hold true.

\paragraph{Sufficiency}
Remember that $T = [u_1,\ldots,u_{n+d}]$ is an
$(n+d)\times(n+d)$ matrix of generalized
eigenvectors of the matrix pencil
$\langle C^\top C,\, \varSigma \rangle$,
and respective generalized eigenvalues are arranged in ascending order.
By means of linear operations of the columns,
the matrix $T^{-1} \hxx$ can be transformed into
the reduced column echelon form.
In other words, there exists such an $n\times n$ nonsingular matrix $T_8$,
that the  $(n+d)\times n$ matrix
\begin{equation}
T_5 = T^{-1} \hxx T_8 \label{eq:T5}
\end{equation}
is in the reduced column echelon form.
The equality \eqref{eq:T5} implies that
\begin{equation}
\label{eq:hxxT1T5} \colspana<\hxx> = \colspana<T T_5>.
\end{equation}
If condition \eqref{neq0l709} holds, then in
representation~\eqref{eq:hxxT1T5}
the matrix $T_5$ has the following block structure
\[\begin{htpicture}
T_5 = \begin{array}{|c|c|}
\hline
I_{d^* - 1}                  & 0_{(d^*-1) \times (d - d^* + 1)} \\
\hline
0_{(d^*-d_*+1)\times(d^*-1)} & T_{61}          \\
\hline
\multicolumn{2}{|c|}{0_{(n - d^*) \times d}}
\\ \hline
\end{array}\,,\end{htpicture}
\]
where $T_{61}$ is a $(d^*-d_*+1) \times (d-d_*+1)$ reduced column echelon matrix.
(Any of the blocks except $T_{61}$ may be an ``empty matrix''.)

Since the columns of $T_5$ are linearly independent, the columns of
$T_{61}$ are linearly independent as well.
Hence the matrix $T_{61}$ may
be appended with columns such that the resulting matrix $T_6 =
[T_{61}, T_{62}]$ is nonsingular. Perform the Gram--Schmidt
orthogonalization of columns of the matrix $T_6$ by constructing such
an upper-triangular matrix
\[\begin{htpicture}
T_7 = \begin{pmatrix} T_{71} & T_{72} \\ 0 & T_{74}
\end{pmatrix} = \begin{array}{|c|c|}\hline T_{71} & T_{72} \\ \hline
 0_{(d^*-d)\times(d-d_*+1)} & T_{74}
\\ \hline
\end{array}\end{htpicture}
\]
that $T_7^\top T_6^\top T_6^{} T_7^{} = I_{d^* - d_* + 1}$.

Change the basis in the simultaneous diagonalization
of the matrices $C^\top C$ and $\varSigma$.
Denote
\[
T_{\rm new} = \bigl[u_1, \ldots u_{d_*-1},
[u_{d_*}, \ldots u_{d^*}] T_6 T_7,
u_{d^*+1}, \ldots u_{n+d} \bigr].
\]
If $\nu_d > 0$, the equation \eqref{eq:diddecomposCCS}
with $T_{\rm new}$ substituted for $T$ holds true, since
\[
T_{\rm new}^\top C^\top C T_{\rm new}^{}
= \varLambda,\qquad T_{\rm new}^\top \varSigma
T_{\rm new}^{} = \mathrm{M}.
\]
(Here we use that
$\lambda_{d_*} = \cdots = \lambda_{d^*}$,
$\mu_{d_*} = \cdots = \mu_{d^*}$.
If $\nu_d = 0$, then the latter equation may or may not hold true.)
The subspace
\begin{align*}
\colspana<\widehat X_{\rm ext}> = \colspana<T T_5> &= \colspan
\bigl\langle u_1, \ldots u_{d_*-1}, [u_{d_*},
\ldots u_{d^*}] T_{61} \bigr\rangle
\\
&= \colspan \bigl\langle u_1, \ldots u_{d_*-1},
[u_{d_*}, \ldots u_{d^*}] T_{61} T_{71}
\bigr\rangle
\end{align*}
is spanned by the first $d$ columns of the matrix $T_{\rm new}$.

It can be easily verified that
$\colspana<\hxx^\top C^\top> = \colspana<T_8^\top T_5^\top \varLambda>$
and
$\colspana<\hxx^\top \varSigma> = \colspana<T_8^\top T_5^\top \mathrm{M}>$.
The condition
$\colspana<\hxx^\top C^\top> \subset \colspana<\hxx^\top \varSigma>$
holds true if (and only if) $\nu_d < \infty$.
Thus, due to Proposition~\ref{prop-l588},
if the condition $\nu_d < \infty$ holds true,
then the constraints
  $\Delta \, (I - P_\varSigma) = 0$ and
  $(C - \Delta) \widehat{X}_{\rm ext}=0$
are compatible.{\sloppy\par}

Let $\Deltapm$ be a common point of minimum in
\begin{equation*}
\lambda_{k+m-d} \bigl(\Deltapm \pinvb(\varSigma) \Deltapm^\top
\bigr) = \min_{\substack{\Delta (I - P_\varSigma) = 0\\
(C - \Delta) \hxx = 0}} \lambda_{k+m-d} \bigl(\Delta
\pinvb(\varSigma) \Delta^\top \bigr)
\end{equation*}
for all $k=1,\ldots,d$, such that
$
\Deltapm \, (I - P_\varSigma) = 0 $ and $
(C - \Deltapm) \hxx = 0$;
such $\Deltapm$ exists due to Remark~\ref{remark:forprop6.4}.
By Proposition~\ref{prop:6.5},
\[
\lambda_{k+m-d} \bigl(\Deltapm \pinvb(\varSigma) \Deltapm^\top
\bigr) = \nu_k, \quad k=1,\ldots,d,
\]
and, from the proof of Preposition~\ref{prop:gs6.6},
\[
\lambda_i \bigl(\Deltapm \pinvb(\varSigma) \Deltapm^\top
\bigr) = 0, \quad i=1,\ldots,m-d.
\]
The minimum in \eqref{eqTLS118} is attained at $\Delta=\Deltapm$.

The case $\nu_d = 0$ is trivial: then \eqref{neq0l709}
imply that $C \widehat{X}_{\rm ext} = 0$.
Then $\Delta = 0$ satisfies the constraints $\Delta \, (I - P_\varSigma) = 0$ and
$(C - \Delta) \widehat{X}_{\rm ext}=0$
and minimizes the criterion function in \eqref{eqTLS118}.
\end{proof}

\begin{proof}[Proof of Proposition \ref{prop-6eximl}.]
Remember that if $\nu_d < \infty$, then the constraints
in \eqref{eqTLS220} are compatible, and the minimum
is attained and is equal to $\nu_d$;
see Proposition~\ref{prop:6.5}.
Otherwise, if $\nu_d = \infty$, then the constraints
in \eqref{eqTLS220} are incompatible.

Transform the expression for the functional \eqref{eqf-l816}:
\begin{align}
Q_1(X) &:= \lambda_{\max} \bigl( \bigl(X^\top
\varSigma X \bigr)^{-1} X^\top C^\top C X \bigr)
\nonumber
\\
&= \lambda_{\max} \bigl(C X \bigl(X^\top \varSigma X
\bigr)^{-1} X^\top C^\top \bigr)
\nonumber
\\
&= \min_{\Delta_1 \in \mathbb{R}^{m \times (n+d)}\, :\,
\Delta_1 (I - P_\varSigma) = 0, \,
(C - \Delta_1) X = 0} \lambda_{\max} \bigl(
\Delta_1^{} \pinvb(\varSigma) \Delta_1^\top
\bigr). \label{eq:Q1_l3786}
\end{align}
Here we used the rule how eigenvalues of the matrix product change
when the matrices are swapped, and we also used
Propositions \ref{prop-l588} and \ref{prop:simpifdef}.
By Proposition~\ref{prop:6.5}, $Q_1(X) \ge \nu_d$.

If the minimum in \eqref{eqTLS220} \& \eqref{eqTLSX124} is attained
(say at some point $(\Delta, \widehat X_{\rm ext})$),
then the constraints in the right-hand side of \eqref{eq:Q1_l3786}
are compatible for $X = \widehat X_{\rm ext}$
(particularly,
$\Delta$ is a matrix that satisfies the constraints).
Then by Proposition~\ref{prop:simpifdef}
the matrix $\widehat{X}_{\rm ext}^\top \varSigma \widehat{X}_{\rm ext}^{}$
is nonsingular.
Thus, for $X=\widehat{X}_{\rm ext}$,
minimum in the right-hand of \eqref{eq:Q1_l3786} is attained at $\Delta_1 = \Delta$
(because $\Delta$ satisfies stronger constraints of \eqref{eq:Q1_l3786}
and brings a minimum to the same functional
with weaker constraints of \eqref{eqTLS220}).

Hence,
\begin{align*}
Q_1(\widehat{X}_{\rm ext}) &:= \min_{\Delta_1 \in \mathbb{R}^{m \times (n+d)}\, :\,
\Delta_1 (I - P_\varSigma) = 0 , \,
(C - \Delta_1) \widehat{X}_{\rm ext} = 0}
\lambda_{\max} \bigl(\Delta_1^{} \pinvb(
\varSigma) \Delta_1^\top \bigr)
\\
&= \bigl(\Delta \pinvb(\varSigma) \Delta^\top \bigr) =
\nu_d,
\end{align*}
which is the minimum value of $Q_1$.

Transform the expression for the functional \eqref{eqf-l816mm}:
\begin{align*}
&\lambda_{\max} \bigl( \bigl(X^\top \varSigma X
\bigr)^{-1} X^\top \bigl(C^\top C - m \varSigma
\bigr) X \bigr)
\\
&\quad= \lambda_{\max} \bigl( \bigl(X^\top \varSigma X
\bigr)^{-1} X^\top \bigl(C^\top C \bigr) X - m
I_{n+d} \bigr) = Q_1(X) - m .
\end{align*}
Hence, the functionals \eqref{eqf-l816} and \eqref{eqf-l816mm} attain their minimal values
at the same points.
\end{proof}

\section{Conclusion}
The linear errors-in-variables model is considered.
The errors are assumed to have the same covariance matrix for each observation
and to be independent between different observations,
however some variables may be observed without errors.
Detailed proofs of the consistency theorems for the
TLS estimator, which were first stated in \cite{Shklyar2011}, are presented.

It is proved that that
the final estimator $\widehat X$ for explicit-notation
regression coefficients
(i.e., for $X_0$ in \eqref{eq:linregww} or \eqref{eq:A0X0B0},
and not the estimator $\hxx$ for $\xxtrue$ in
equation~\eqref{eq:C0X0ext0},
which sets the relationship between the regressors and response variables
\textit{implicitly\/})
is unique,
either with high probability or eventually.
This means that
in the classification used in \cite{Hnetynkova2011},
the TLS problem is of 1st class set $\mathcal{F}_1$
(the solution is unique and ``generic''),
with high probability or eventually.

As by-product, we get that if in the definition of the
estimator the Frobenius norm is replaced by the spectral norm,
then the consistency theorems still hold true.
The disadvantage of using spectral norm is that
the estimator $\widehat X$ is not unique then.
(The set of solutions to the minimal spectral norm problem
contains the set of solutions to the TLS problem.
On the other hand, it is possible that the
minimal spectral norm problem has solutions,
but the TLS problem has not -- this is
the TLS problem of 1st class set $\mathcal{F}_3$;
the probability of this random event tends to 0.)

Results can be generalized to any unitary invariant matrix norm. I do not know whether they hold true
for non-invariant norms such as the maximum absolute entry,
which is studied in \cite{Hladic2017}.

\begin{thebibliography}{20}

\bibitem{ChengVanNess}
\begin{bbook}
\bauthor{\bsnm{Cheng}, \binits{C.-L.}},
\bauthor{\bsnm{Van~Ness}, \binits{J.W.}}:
\bbtitle{Statistical Regression with Measurement Error}.
\bpublisher{Wiley}
(\byear{2010}).
\bid{mr={1719513}}
\end{bbook}
%
\OrigBibText
\begin{bbook}
\bauthor{\bsnm{Cheng}, \binits{C.-L.}},
\bauthor{\bsnm{Van~Ness}, \binits{J.W.}}:
\bbtitle{Statistical Regression with Measurement Error}.
\bpublisher{Wiley}
(\byear{2010})
\end{bbook}
\endOrigBibText
\bptok{structpyb}%
\endbibitem

\bibitem{deLeeuw}
\begin{barticle}
\bauthor{\bparticle{de} \bsnm{Leeuw}, \binits{J.}}:
\batitle{Generalized eigenvalue problems with positive semi-definite matrices}.
\bjtitle{Psychometrika}
\bvolume{47}(\bissue{1}),
\bfpage{87}--\blpage{93}
(\byear{1982}).
\bid{doi={10.1007/BF02293853}, mr={0668507}}
\end{barticle}
%
\OrigBibText
\begin{barticle}
\bauthor{\bparticle{de} \bsnm{Leeuw}, \binits{J.}}:
\batitle{Generalized eigenvalue problems with positive semi-definite matrices}.
\bjtitle{Psychometrika}
\bvolume{47}(\bissue{1}),
\bfpage{87}--\blpage{93}
(\byear{1982}).
\doiurl{10.1007/BF02293853}
\end{barticle}
\endOrigBibText
\bptok{structpyb}%
\endbibitem

\bibitem{KyFan1951}
\begin{barticle}
\bauthor{\bsnm{Fan}, \binits{K.}}:
\batitle{Maximum properties and inequalities for the eigenvalues of completely
 continuous operators}.
\bjtitle{Proceedings of the National Academy of Sciences of the USA}
\bvolume{37}(\bissue{11}),
\bfpage{760}--\blpage{766}
(\byear{1951}).
\bid{mr={0045952}}.
\doiurl{10.1073/pnas.37.11.760}
\end{barticle}
%
\OrigBibText
\begin{barticle}
\bauthor{\bsnm{Fan}, \binits{K.}}:
\batitle{Maximum properties and inequalities for the eigenvalues of completely
 continuous operators}.
\bjtitle{Proceedings of the National Academy of Sciences of the USA}
\bvolume{37}(\bissue{11}),
\bfpage{760}--\blpage{766}
(\byear{1951}).
\doiurl{10.1073/pnas.37.11.760}
\end{barticle}
\endOrigBibText
\bptok{structpyb}%
\endbibitem

\bibitem{Gallo82}
\begin{barticle}
\bauthor{\bsnm{Gallo}, \binits{P.P.}}:
\batitle{Consistency of regression estimates when some variables are subject to  error}.
\bjtitle{Communications in Statistics -- Theory and Methods}
\bvolume{11}(\bissue{9}),
\bfpage{973}--\blpage{983}
(\byear{1982}).
\doiurl{10.1080/03610928208828287}
\end{barticle}
%
\OrigBibText
\begin{barticle}
\bauthor{\bsnm{Gallo}, \binits{P.P.}}:
\batitle{Consistency of regression estimates when some variables are subject to
 error}.
\bjtitle{Communications in Statistics -- Theory and Methods}
\bvolume{11}(\bissue{9}),
\bfpage{973}--\blpage{983}
(\byear{1982}).
\doiurl{10.1080/03610928208828287}
\end{barticle}
\endOrigBibText
\bptok{structpyb}%
\endbibitem

\bibitem{Gleser1981}
\begin{barticle}
\bauthor{\bsnm{Gleser}, \binits{L.J.}}:
\batitle{Estimation in a multivariate {``}errors in variables{''} regression
 model: Large sample results}.
\bjtitle{The Annals of Statistics}
\bvolume{9}(\bissue{1}),
\bfpage{24}--\blpage{44}
(\byear{1981}).
\bid{mr={0600530}}
\end{barticle}
%
\OrigBibText
\begin{barticle}
\bauthor{\bsnm{Gleser}, \binits{L.J.}}:
\batitle{Estimation in a multivariate {``}errors in variables{''} regression
 model: Large sample results}.
\bjtitle{The Annals of Statistics}
\bvolume{9}(\bissue{1}),
\bfpage{24}--\blpage{44}
(\byear{1981})
\end{barticle}
\endOrigBibText
\bptok{structpyb}%
\endbibitem

\bibitem{GOLUB1987317}
\begin{barticle}
\bauthor{\bsnm{Golub}, \binits{G.H.}},
\bauthor{\bsnm{Hoffman}, \binits{A.}},
\bauthor{\bsnm{Stewart}, \binits{G.W.}}:
\batitle{A generalization of the {Eckart--Young--Mirsky} matrix approximation
 theorem}.
\bjtitle{Linear Algebra and its Applications}
\bvolume{88--89}(\bissue{Supplement C}),
\bfpage{317}--\blpage{327}
(\byear{1987}).
\bid{doi={10.1016/0024-3795(87)90114-5}, mr={0882452}}
\end{barticle}
%
\OrigBibText
\begin{barticle}
\bauthor{\bsnm{Golub}, \binits{G.H.}},
\bauthor{\bsnm{Hoffman}, \binits{A.}},
\bauthor{\bsnm{Stewart}, \binits{G.W.}}:
\batitle{A generalization of the {Eckart--Young--Mirsky} matrix approximation
 theorem}.
\bjtitle{Linear Algebra and its Applications}
\bvolume{88--89}(\bissue{Supplement C}),
\bfpage{317}--\blpage{327}
(\byear{1987}).
\doiurl{10.1016/0024-3795(87)90114-5}
\end{barticle}
\endOrigBibText
\bptok{structpyb}%
\endbibitem

\bibitem{Hladic2017}
\begin{barticle}
\bauthor{\bsnm{Hlad\'{\i}k}, \binits{M.}},
\bauthor{\bsnm{\v{C}ern\'{y}}, \binits{M.}},
\bauthor{\bsnm{Antoch}, \binits{J.}}:
\batitle{{EIV} regression with bounded errors in data: total `least squares'
 with {Chebyshev} norm}.
\bjtitle{Statistical Papers}
(\byear{2017}).
\doiurl{10.1007/\\s00362-017-0939-z}
\end{barticle}
%
\OrigBibText
\begin{barticle}
\bauthor{\bsnm{Hlad\'{\i}k}, \binits{M.}},
\bauthor{\bsnm{\v{C}ern\'{y}}, \binits{M.}},
\bauthor{\bsnm{Antoch}, \binits{J.}}:
\batitle{{EIV} regression with bounded errors in data: total `least squares'
 with {Chebyshev} norm}.
\bjtitle{Statistical Papers}
(\byear{2017}).
\doiurl{10.1007/s00362-017-0939-z}
\end{barticle}
\endOrigBibText
\bptok{structpyb}%
\endbibitem

\bibitem{Hnetynkova2011}
\begin{barticle}
\bauthor{\bsnm{Hn\v{e}tynkov\'{a}}, \binits{I.}},
\bauthor{\bsnm{Ple\v{s}inger}, \binits{M.}},
\bauthor{\bsnm{Sima}, \binits{D.M.}},
\bauthor{\bsnm{Strako\v{s}}, \binits{Z.}},
\bauthor{\bsnm{Van~Huffel}, \binits{S.}}:
\batitle{The total least squares problem in {$AX \approx B$}: A new
 classification with the relationship to the classical works}.
\bjtitle{{SIAM} Journal on Matrix Analysis and Applications}
\bvolume{32}(\bissue{3}),
\bfpage{748}--\blpage{770}
(\byear{2011}).
\bid{doi={10.1137/100813348}, mr={2825323}}
\end{barticle}
%
\OrigBibText
\begin{barticle}
\bauthor{\bsnm{Hn\v{e}tynkov\'{a}}, \binits{I.}},
\bauthor{\bsnm{Ple\v{s}inger}, \binits{M.}},
\bauthor{\bsnm{Sima}, \binits{D.M.}},
\bauthor{\bsnm{Strako\v{s}}, \binits{Z.}},
\bauthor{\bsnm{Van~Huffel}, \binits{S.}}:
\batitle{The total least squares problem in {$AX \approx B$}: A new
 classification with the relationship to the classical works}.
\bjtitle{{SIAM} Journal on Matrix Analysis and Applications}
\bvolume{32}(\bissue{3}),
\bfpage{748}--\blpage{770}
(\byear{2011}).
\doiurl{10.1137/100813348}
\end{barticle}
\endOrigBibText
\bptok{structpyb}%
\endbibitem

\bibitem{KMF2005}
\begin{barticle}
\bauthor{\bsnm{Kukush}, \binits{A.}},
\bauthor{\bsnm{Markovsky}, \binits{I.}},
\bauthor{\bsnm{Van~Huffel}, \binits{S.}}:
\batitle{Consistency of the structured total least squares estimator in a
 multivariate errors-in-variables model}.
\bjtitle{Journal of Statistical Planning and Inference}
\bvolume{133}(\bissue{2}),
\bfpage{315}--\blpage{358}
(\byear{2005}).
\bid{doi={10.1016/j.jspi.\\2003.12.020}, mr={2194481}}
\end{barticle}
%
\OrigBibText
\begin{barticle}
\bauthor{\bsnm{Kukush}, \binits{A.}},
\bauthor{\bsnm{Markovsky}, \binits{I.}},
\bauthor{\bsnm{Van~Huffel}, \binits{S.}}:
\batitle{Consistency of the structured total least squares estimator in a
 multivariate errors-in-variables model}.
\bjtitle{Journal of Statistical Planning and Inference}
\bvolume{133}(\bissue{2}),
\bfpage{315}--\blpage{358}
(\byear{2005}).
\doiurl{10.1016/j.jspi.2003.12.020}
\end{barticle}
\endOrigBibText
\bptok{structpyb}%
\endbibitem

\bibitem{KVH02}
\begin{barticle}
\bauthor{\bsnm{Kukush}, \binits{A.}},
\bauthor{\bsnm{Van~Huffel}, \binits{S.}}:
\batitle{Consistency of elementwise-weighted total least squares estimator in a
 multivariate errors-in-variables model {$AX=B$}}.
\bjtitle{Metrika}
\bvolume{59}(\bissue{1}),
\bfpage{75}--\blpage{97}
(\byear{2004}).
\bid{doi={10.1007/s001840300272}, mr={2043433}}
\end{barticle}
%
\OrigBibText
\begin{barticle}
\bauthor{\bsnm{Kukush}, \binits{A.}},
\bauthor{\bsnm{Van~Huffel}, \binits{S.}}:
\batitle{Consistency of elementwise-weighted total least squares estimator in a
 multivariate errors-in-variables model {$AX=B$}}.
\bjtitle{Metrika}
\bvolume{59}(\bissue{1}),
\bfpage{75}--\blpage{97}
(\byear{2004}).
\doiurl{10.1007/s001840300272}
\end{barticle}
\endOrigBibText
\bptok{structpyb}%
\endbibitem

\bibitem{MZ1937}
\begin{barticle}
\bauthor{\bsnm{Marcinkiewicz}, \binits{J.}},
\bauthor{\bsnm{Zygmund}, \binits{A.}}:
\batitle{Sur les fonctions ind\'{e}pendantes}.
\bjtitle{Fundamenta Mathematicae}
\bvolume{29},
\bfpage{60}--\blpage{90}
(\byear{1937}).
\bid{mr={0115885}}
\end{barticle}
%
\OrigBibText
\begin{barticle}
\bauthor{\bsnm{Marcinkiewicz}, \binits{J.}},
\bauthor{\bsnm{Zygmund}, \binits{A.}}:
\batitle{Sur les fonctions ind\'{e}pendantes}.
\bjtitle{Fundamenta Mathematicae}
\bvolume{29},
\bfpage{60}--\blpage{90}
(\byear{1937})
\end{barticle}
\endOrigBibText
\bptok{structpyb}%
\endbibitem

\bibitem{WICS:WICS65}
\begin{barticle}
\bauthor{\bsnm{Markovsky}, \binits{I.}},
\bauthor{\bsnm{Sima}, \binits{D.M.}},
\bauthor{\bsnm{Van~Huffel}, \binits{S.}}:
\batitle{Total least squares methods}.
\bjtitle{Wiley Interdisciplinary Reviews: Computational Statistics}
\bvolume{2}(\bissue{2}),
\bfpage{212}--\blpage{217}
(\byear{2010}).
\doiurl{\\10.1002/wics.65}
\end{barticle}
%
\OrigBibText
\begin{barticle}
\bauthor{\bsnm{Markovsky}, \binits{I.}},
\bauthor{\bsnm{Sima}, \binits{D.M.}},
\bauthor{\bsnm{Van~Huffel}, \binits{S.}}:
\batitle{Total least squares methods}.
\bjtitle{Wiley Interdisciplinary Reviews: Computational Statistics}
\bvolume{2}(\bissue{2}),
\bfpage{212}--\blpage{217}
(\byear{2010}).
\doiurl{10.1002/wics.65}
\end{barticle}
\endOrigBibText
\bptok{structpyb}%
\endbibitem

\bibitem{Markovsky2006}
\begin{bbook}
\bauthor{\bsnm{Markovsky}, \binits{I.}},
\bauthor{\bsnm{Willems}, \binits{J.C.}},
\bauthor{\bsnm{Van~Huffel}, \binits{S.}},
\bauthor{\bsnm{De~Moor}, \binits{B.}}:
\bbtitle{Exact and Approximate Modeling of Linear Systems: A Behavioral Approach}.
\bpublisher{{SIAM}},
\blocation{Philadelphia}
(\byear{2006}).
\bid{doi={10.1137/1.9780898718263}, mr={2207544}}
\end{bbook}
%
\OrigBibText
\begin{bbook}
\bauthor{\bsnm{Markovsky}, \binits{I.}},
\bauthor{\bsnm{Willems}, \binits{J.C.}},
\bauthor{\bsnm{Van~Huffel}, \binits{S.}},
\bauthor{\bsnm{De~Moor}, \binits{B.}}:
\bbtitle{Exact and Approximate Modeling of Linear Systems: A Behavioral  Approach}.
\bpublisher{{SIAM}},
\blocation{Philadelphia}
(\byear{2006}).
\doiurl{10.1137/1.9780898718263}
\end{bbook}
\endOrigBibText
\bptok{structpyb}%
\endbibitem

\bibitem{Mirsky1960}
\begin{barticle}
\bauthor{\bsnm{Mirsky}, \binits{L.}}:
\batitle{Symmetric gauge functions and unitarily invariant norms}.
\bjtitle{The Quarterly Journal of Mathematics}
\bvolume{11}(\bissue{1}),
\bfpage{50}--\blpage{59}
(\byear{1960}).
\bid{doi={10.1093/qmath/\\11.1.50}, mr={0114821}}
\end{barticle}
%
\OrigBibText
\begin{barticle}
\bauthor{\bsnm{Mirsky}, \binits{L.}}:
\batitle{Symmetric gauge functions and unitarily invariant norms}.
\bjtitle{The Quarterly Journal of Mathematics}
\bvolume{11}(\bissue{1}),
\bfpage{50}--\blpage{59}
(\byear{1960}).
\doiurl{10.1093/qmath/11.1.50}
\end{barticle}
\endOrigBibText
\bptok{structpyb}%
\endbibitem

\bibitem{Newcomb1961}
\begin{barticle}
\bauthor{\bsnm{Newcomb}, \binits{R.W.}}:
\batitle{On the simultaneous diagonalization of two semi-definite matrices}.
\bjtitle{Quarterly of Applied Mathematics}
\bvolume{19}(\bissue{2}),
\bfpage{144}--\blpage{146}
(\byear{1961}).
\bid{doi={10.1090/qam/124336}, mr={0124336}}
\end{barticle}
%
\OrigBibText
\begin{barticle}
\bauthor{\bsnm{Newcomb}, \binits{R.W.}}:
\batitle{On the simultaneous diagonalization of two semi-definite matrices}.
\bjtitle{Quarterly of Applied Mathematics}
\bvolume{19}(\bissue{2}),
\bfpage{144}--\blpage{146}
(\byear{1961}).
\doiurl{10.1090/qam/124336}
\end{barticle}
\endOrigBibText
\bptok{structpyb}%
\endbibitem

\bibitem{Petrov1995}
\begin{bbook}
\bauthor{\bsnm{Petrov}, \binits{V.V.}}:
\bbtitle{Limit Theorems of Probability Theory: Sequences of Independent Random  Variables}.
\bpublisher{Clarendon Press},
\blocation{Oxford}
(\byear{1995}).
\bid{mr={1353441}}
\end{bbook}
%
\OrigBibText
\begin{bbook}
\bauthor{\bsnm{Petrov}, \binits{V.V.}}:
\bbtitle{Limit Theorems of Probability Theory: Sequences of Independent Random
 Variables}.
\bpublisher{Clarendon Press},
\blocation{Oxford}
(\byear{1995})
\end{bbook}
\endOrigBibText
\bptok{structpyb}%
\endbibitem

\bibitem{Pfanzagl1969}
\begin{barticle}
\bauthor{\bsnm{Pfanzagl}, \binits{J.}}:
\batitle{On the measurability and consistency of minimum contrast estimates}.
\bjtitle{Metrika}
\bvolume{14},
\bfpage{249}--\blpage{272}
(\byear{1969}).
\doiurl{10.1007/BF02613654}
\end{barticle}
%
\OrigBibText
\begin{barticle}
\bauthor{\bsnm{Pfanzagl}, \binits{J.}}:
\batitle{On the measurability and consistency of minimum contrast estimates}.
\bjtitle{Metrika}
\bvolume{14},
\bfpage{249}--\blpage{272}
(\byear{1969}).
\doiurl{10.1007/BF02613654}
\end{barticle}
\endOrigBibText
\bptok{structpyb}%
\endbibitem

\bibitem{Shklyar2011}
\begin{barticle}
\bauthor{\bsnm{Shklyar}, \binits{S.V.}}:
\batitle{Conditions for the consistency of the total least squares estimator in
 an errors-in-variables linear regression model}.
\bjtitle{Theory of Probability and Mathematical Statistics}
\bvolume{83},
\bfpage{175}--\blpage{190}
(\byear{2011}).
\bid{doi={10.1090/S0094-9000-\\2012-00850-8}, mr={2768857}}
\end{barticle}
%
\OrigBibText
\begin{barticle}
\bauthor{\bsnm{Shklyar}, \binits{S.V.}}:
\batitle{Conditions for the consistency of the total least squares estimator in
 an errors-in-variables linear regression model}.
\bjtitle{Theory of Probability and Mathematical Statistics}
\bvolume{83},
\bfpage{175}--\blpage{190}
(\byear{2011}).
\doiurl{10.1090/S0094-9000-2012-00850-8}
\end{barticle}
\endOrigBibText
\bptok{structpyb}%
\endbibitem

\bibitem{StewartSun}
\begin{bbook}
\bauthor{\bsnm{Stewart}, \binits{G.}},
\bauthor{\bsnm{Sun}, \binits{J.-g.}}:
\bbtitle{Matrix Perturbation Theory}.
\bpublisher{Academic Press},
\blocation{San Diego}
(\byear{1990}).
\bid{mr={1061154}}
\end{bbook}
%
\OrigBibText
\begin{bbook}
\bauthor{\bsnm{Stewart}, \binits{G.}},
\bauthor{\bsnm{Sun}, \binits{J.-g.}}:
\bbtitle{Matrix Perturbation Theory}.
\bpublisher{Academic Press},
\blocation{San Diego}
(\byear{1990})
\end{bbook}
\endOrigBibText
\bptok{structpyb}%
\endbibitem

\bibitem{VanHuffelVandewalle1991}
\begin{bbook}
\bauthor{\bsnm{Van~Huffel}, \binits{S.}},
\bauthor{\bsnm{Vandewalle}, \binits{J.}}:
\bbtitle{The Total Least Squares Problem: Computational Aspects and Analysis}.
\bpublisher{{SIAM}},
\blocation{Philadelphia}
(\byear{1991}).
\bid{doi={10.1137/\\1.9781611971002}, mr={1118607}}
\end{bbook}
%
\OrigBibText
\begin{bbook}
\bauthor{\bsnm{Van~Huffel}, \binits{S.}},
\bauthor{\bsnm{Vandewalle}, \binits{J.}}:
\bbtitle{The Total Least Squares Problem: Computational Aspects and Analysis}.
\bpublisher{{SIAM}},
\blocation{Philadelphia}
(\byear{1991}).
\doiurl{10.1137/1.9781611971002}
\end{bbook}
\endOrigBibText
\bptok{structpyb}%
\endbibitem

\end{thebibliography}
\end{document}